\providecommand{\tabularnewline}{\\}
\numberwithin{equation}{section}
\numberwithin{figure}{section}
\providecommand{\U}[1]{\protect\rule{.1in}{.1in}}
\newtheorem{theorem}{Theorem}
\theoremstyle{plain}
\newtheorem{proposition}[theorem]{Proposition}
\newtheorem{lemma}[theorem]{Lemma}
\newtheorem{corollary}[theorem]{Corollary}
\theoremstyle{definition}
\newtheorem{definition}[theorem]{Definition}
\theoremstyle{remark}
\newtheorem{remark}[theorem]{Remark}
\numberwithin{equation}{section}
\DeclareMathOperator{\Fi}{Fi}
\DeclareMathOperator{\Cl}{cl}
\DeclareMathOperator{\sat}{sat}
\let\pdfoutput=\undefined\fi
\begin{document}

\title{Monotonic Distributive Semilattices}
\thanks{This paper has received funding from the European Union’s Horizon 2020 research and innovation programme under the Marie Skłodowska-Curie grant agreement No 689176, and the support of the grant PIP 11220150100412CO of CONICET (Argentina).}

\author{Sergio A. Celani \and Ma. Paula Mench\'{o}n}

\email{scelani@exa.unicen.edu.ar}
\email{mpmenchon@exa.unicen.edu.ar}

\address{CONICET and Departamento de Matem\'{a}ticas, Facultad de Ciencias
Exactas, Univ. Nac. del Centro, Pinto 399, 7000 Tandil, Argentina\\
 }

\maketitle
\subjclass{ } 

\begin{abstract}
In the study of algebras related to non-classical
logics, (distributive) semilattices are always present in the background.
For example, the algebraic semantic of the $\{\rightarrow,\wedge,\top\}$-fragment
of intuitionistic logic is the variety of implicative meet-semilattices
\cite{CelaniImplicative} \cite{ChajdaHalasKuhr}. In this paper we
introduce and study the class of distributive meet-semilattices endowed
with a monotonic modal operator $m$. We study the representation
theory of these algebras using the theory of canonical extensions
and we give a topological duality for them. Also, we show
how our new duality extends to some particular subclasses.
\end{abstract}

\keywords{Distributive meet semilattices, monotonic modal logics, $DS$-spaces, modal operators.}
% \PACS{PACS code1 \and PACS code2 \and more}
% \subclass{MSC code1 \and MSC code2 \and more}

\section{Introduction}

Boolean algebras with modal operators are the algebraic semantic of
classical modal logics. Using Stone\textquoteright s topological representation
of Boolean algebras, it is known that every Boolean algebra with a
modal operator can be represented as a relational structure \cite{Chagrov-Zakharyaschev}.
This representation plays an important role in the study of many extensions
of normal modal logics \cite{Chagrov-Zakharyaschev} and monotone
modal logics \cite{Chellas} \cite{Hansen}. Recall that monotone
modal logics are a generalization of normal modal logics in which
the axiom $m(\varphi\rightarrow\phi)\rightarrow(m\varphi\rightarrow m\phi)$
has been weakened, leading to a monotonicity condition which can be either
expressed as an axiom ($m(\varphi\wedge\phi)\rightarrow m\varphi$),
or as a rule (from $\varphi\rightarrow\phi$ derive $m\varphi\rightarrow m\phi$).
Thus, it is possible to study monotone modal logics with
a language containing only the connectives $\wedge$ and $m$, or $\rightarrow$
and $m$.

Classical monotone modal logics are interpreted semantically by means
of neighborhood frames \cite{Chellas} \cite{Hansen} \cite{HansenKupkePacuit}.
This class of structures provides a generalization of Kripke semantics. Every neighborhood frame produces a Boolean
algebra endowed with a monotonic operator, called monotonic algebra.
And reciprocally, every monotonic algebra defines a neighborhood frame
(see \cite{Hansen} or \cite{Celaniboole}). Also, it is possible to
consider monotone modal logics defined on non-classical logics. For
example, in \cite{Kojima}, Kojima considered neighborhood semantics
for intuitionistic modal logic, and he defined a neighborhood frame
as a triple $\langle W,\leq,N\rangle$ where $N$ is a neighborhood
function, which is a mapping from $W$ to $\mathcal{P}(\mathcal{P}(W))$
that satisfies the decreasing condition, i.e., $N(x)\supseteq N(y)$
whenever $x\leq y$ (see Definition 3.1 of \cite{Kojima}). Monotonic
logics based on intuitionistic logic are also studied in \cite{Sotirov}.

In the study of algebras related to non-classical logics, semilattices
are always present in the background. For example, the algebraic semantic
of the $\{\rightarrow,\wedge,\top\}$-fragment of intuitionistic logic is the variety of implicative meet-semilattices \cite{CelaniImplicative}
\cite{ChajdaHalasKuhr}, and it is well known that the meet-semilattice
reduct of an implicative meet-semilattice is distributive in the sense
of \cite{Gratzer} or \cite{CelaniTopological}. In \cite{Gratzer}
G. Gr\"{a}tzer gave a topological representation for distributive
semilattices using sober spaces. This representation was extended to
a topological duality in \cite{CelaniTopological} and \cite{CelaniCalomino}.
The principal novelty of \cite{CelaniTopological} was the characterization
of meet-semilattice homomorphisms preserving top by means of certain
binary relations. For implicative semilattices there exists a similar
representation in \cite{CelaniImplicative}. The main objective of
this paper is to study a full Stone style duality for distributive
meet-semilattices endowed with a monotonic operator. So, most of the
results given in this paper are applicable, with minor modifications,
to the study of bounded distributive lattices, implicative semilattices,
 Heyting algebras, and Boolean algebras with monotonic operators.
We note that in the particular case of Boolean algebras our duality
yields the duality given in \cite{Celaniboole} and \cite{Hansen}. 

Canonical extensions were introduced by J\'{o}nsson and Tarski to
study Boolean algebras with operators. The main purpose was to make
it easier to identify what form the dual of an additional operation
on a lattice should take. Since their seminal work, the theory of
canonical extensions has been simplified and generalized \cite{Gehrke -Jonsson2000,PalmigianoDunn},
leading to a theory widely applicable beyond the original Boolean
setting. We will use canonical extension as a tool for the development
of a theory of relational methods, in an algebraic way. 

The paper is organized as follows. In Section 2 we recall the definitions
and some basic properties of distributive semilattices and canonical
extensions. We recall the topological representation and duality developed in \cite{CelaniTopological}
and \cite{CelaniCalomino}. In Section 3, we introduce a special class
of saturated sets of a $DS$-space that is dual to the family of order ideals of a distributive semilattice. In Section 4 we present the class of distributive
semilattices endowed with a monotonic operator, and we extend the
results on representation using canonical extensions. In section 5
we consider some important applications of the duality. We show how
our new duality extends to some particular subclasses.

\section{Preliminaries}

We include some elementary properties of distributive semilattices
that are necessary to read this paper. For more details see \cite{CelaniTopological},
\cite{ChajdaHalasKuhr} and \cite{CelaniCalomino}. 

Let $\langle X,\leq\rangle$ be a poset. For each $Y\subseteq X$,
let $[Y)=\{x\in X:\exists y\in Y(y\leq x)\}$ and $(Y]=\{x\in X:\exists y\in Y(x\leq y)\}$.
If $Y=\{y\}$, then we will write $[y)$ and $(y]$ instead of $[\{y\})$
and $(\{y\}]$, respectively. We call $Y$ an \emph{upset} (resp.
\emph{downset}) if $Y=[Y)$ (resp. $Y=(Y]$). The set of all upsets of $X$ will be denoted by $\mathrm{Up}(X)$. The complement of
a subset $Y\subseteq X$ will be denoted by $Y^{c}$ or $X-Y$. 

\begin{definition} A \emph{meet-semilattice with greatest element}, a
semilattice for short, is an algebra $\mathbf{A}=\langle A,\wedge,1\rangle$
of type $(2,0)$ such that the operation $\wedge$ is idempotent,
commutative, associative, and $a\wedge1=a$ for all $a\in A$. 

\end{definition}

It is clear that for each poset $\langle X,\leq\rangle$ the structure
$\langle\mathrm{Up}(X),\cap,X\rangle$ is a semilattice. 

Let $\mathbf{A}$ be a semilattice. As usual, we can define a partial
order on $\mathbf{A}$, called the natural order, as $a\leq b$ iff
$a\wedge b=a$. It is easy to see that $1$ is the greatest element
of $A$. A subset $F\subseteq A$ is a \emph{filter} of $\mathbf{A}$
if it is an upset, $1\in F$ and if $a,b\in F$, then $a\wedge b\in F$.
We will denote the set of all filters of $\mathbf{A}$ by $\Fi({\mathbf{A}})$.
It is easy to see that $\Fi({\mathbf{A}})$ is closed under arbitrary
intersections. The filter generated by the subset $X\subseteq A$
will be denoted by $F(X)$. If $X=\{a\}$, $F(\{a\})=F(a)=[a)$. We
shall say that a proper filter is \emph{irreducible} or \emph{prime}
if for any pair of filters $F_{1},F_{2}$ such that $F=F_{1}\cap F_{2}$,
it follows that $F=F_{1}$ or $F=F_{2}$.

We will denote the set of all irreducible filters of a semilattice
$\mathbf{A}$ by $X({\mathbf{A}})$. A subset $I\subseteq A$ is called
an \emph{order ideal} if it is a downset and for every $a,b\in I$
we have that there exists $c\in I$ such that $a,b\leq c$. We will
denote the set of all order ideals of $\mathbf{A}$ by $\mathrm{Id}({\mathbf{A}})$. 

\begin{theorem}\cite{CelaniTopological} \label{sep}Let $\mathbf{A}$
be a semilattice. Let $F\in\Fi({\mathbf{A}})$ and let $I\in\mathrm{Id}({\mathbf{A}})$
such that $F\cap I=\emptyset$. Then there exists $P\in X({\mathbf{A}})$
such that $F\subseteq P$ and $P\cap I=\emptyset$. \end{theorem}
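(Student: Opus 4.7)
The plan is to obtain $P$ as a maximal element of the family of filters extending $F$ and disjoint from $I$, and then to check irreducibility using the directedness of the order ideal $I$.

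First, I would consider
\[
\mathcal{F} = \{G \in \Fi(\mathbf{A}) : F \subseteq G \text{ and } G \cap I = \emptyset\},
\]
partially ordered by inclusion. The family $\mathcal{F}$ is nonempty because $F$ itself belongs to it. Given a chain $\mathcal{C} \subseteq \mathcal{F}$, its union $\bigcup \mathcal{C}$ is again a filter (it is upward closed, contains $1$, and is closed under binary meets because any two of its elements already lie in a common filter of the chain) and it is disjoint from $I$ because each member of the chain is. Hence Zorn's lemma yields a maximal element $P \in \mathcal{F}$.

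It remains to verify that $P$ is irreducible in the sense of the definition above. Suppose, towards a contradiction, that $P = F_1 \cap F_2$ with $P \neq F_1$ and $P \neq F_2$, where $F_1, F_2 \in \Fi(\mathbf{A})$. Then $P \subsetneq F_1$ and $P \subsetneq F_2$, so by maximality of $P$ in $\mathcal{F}$ we must have $F_1 \cap I \neq \emptyset$ and $F_2 \cap I \neq \emptyset$. Choose $a \in F_1 \cap I$ and $b \in F_2 \cap I$. Since $I$ is an order ideal (in particular, upward directed), there exists $c \in I$ with $a \leq c$ and $b \leq c$. Because $F_1$ and $F_2$ are upsets, $c \in F_1$ and $c \in F_2$, so $c \in F_1 \cap F_2 = P$, contradicting $P \cap I = \emptyset$.

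The only step that requires some attention is checking that the union of a chain of filters is again a filter, and—more importantly—the use of the \emph{directedness} of $I$ in the irreducibility argument: this is precisely where the hypothesis that $I$ is an order ideal (and not merely a downset) is essential. Without directedness one could not combine witnesses from $F_1 \cap I$ and $F_2 \cap I$ into a single element of $F_1 \cap F_2 \cap I$, and the argument would break down.
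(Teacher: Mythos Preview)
Your argument is correct and is the standard Zorn's-lemma proof of this separation theorem for semilattices. Note that the paper itself does not prove this statement: it is quoted without proof from \cite{CelaniTopological}, so there is no in-paper argument to compare against; your route is exactly the one found in that reference and in the surrounding literature. One minor point worth making explicit is that the maximal $P$ you obtain is \emph{proper} (a prerequisite for irreducibility): this follows because $I\neq\emptyset$ forces $P\neq A$, as any element of $I$ lies outside $P$.
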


A semilattice $\mathbf{A}$ is \emph{distributive} if for all $a,b,c\in A$
such that $a\wedge b\leq c$ there exist $a_{1},b_{1}\in A$ such
that $a\leq a_{1}$, $b\leq b_{1}$ and $c=a_{1}\wedge b_{1}$. We
will denote by $\mathcal{DS}$ the class of distributive semilattices.
We recall (see \cite{CelaniCalomino,CelaniTopological}) that in a distributive semilattice $\mathbf{A}$, if $F$ is a proper filter then the following conditions are equivalent:
\begin{enumerate}
\item $F$ is irreducible,
\item for every $a,b\in A$ such that $a,b\notin F$,
there exist $c\notin F$ and $f\in F$ such that $a\wedge f\leq c$
and $b\wedge f\leq c$,
\item $A-F=F^{c}$ is an order ideal.
\end{enumerate}

Let $\mathbf{A},\mathbf{B}\in \mathcal{DS}$. A mapping $h\colon A\rightarrow B$
is called a \emph{semilattice homomorphism} if 
\begin{enumerate}
\item $h(a)\wedge h(b)=h(a\wedge b)$
for every $a,b\in A$ and
\item $h(1)=1$. 
\end{enumerate} 

Let $\mathbf{A}\in\mathcal{DS}$. Let us consider the poset $\langle X({\mathbf{A}}),\subseteq\rangle$
and the mapping $\beta_{\mathbf{A}}\colon A\rightarrow\mathrm{Up}(X({\mathbf{A}}))$
defined by $\beta_{\mathbf{A}}(a)=\{P\in X({\mathbf{A}}):a\in P\}$.
For convenience, we omit the subscript of $\beta_{\mathbf{A}}$, when
no confusion can arise.

\begin{theorem} \label{rep Hilbert}Let $\mathbf{A}\in\mathcal{DS}$.
Then $\mathbf{A}$ is isomorphic to the subalgebra $\beta[A]=\{\beta(a):a\in A\}$
of $\langle\mathrm{Up}(X({\mathbf{A}})),\cap,X({\mathbf{A}})\rangle$.
\end{theorem}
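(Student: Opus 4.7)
The plan is to verify two things: (i) the map $\beta\colon A\to\mathrm{Up}(X(\mathbf{A}))$ is a semilattice homomorphism whose image is $\beta[A]$, and (ii) $\beta$ is injective. Combined, this gives that $\beta\colon\mathbf{A}\to\beta[A]$ is an isomorphism onto the indicated subalgebra.

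For (i), I would first check that $\beta(a)$ is actually an upset of $\langle X(\mathbf{A}),\subseteq\rangle$: if $P\in\beta(a)$ and $P\subseteq Q$ with $Q\in X(\mathbf{A})$, then $a\in P\subseteq Q$, so $Q\in\beta(a)$. The equality $\beta(1)=X(\mathbf{A})$ holds because $1$ lies in every filter, and in particular in every irreducible filter. To see that $\beta(a\wedge b)=\beta(a)\cap\beta(b)$, note that any filter $P$ is closed under $\wedge$ and is an upset, so $a\wedge b\in P$ iff $a\in P$ and $b\in P$; this uses only that $P$ is a filter, not irreducibility. Surjectivity onto $\beta[A]$ holds by definition.

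The main content is (ii), injectivity, for which I would use the separation result in Theorem \ref{sep}. Suppose $a\neq b$; then without loss of generality $a\not\leq b$. Consider the principal filter $F=[a)$ and the principal order ideal $I=(b]=\{c\in A:c\leq b\}$. Then $F\cap I=\emptyset$: if $c\in F\cap I$, then $a\leq c\leq b$, contradicting $a\not\leq b$. Note that $(b]$ is an order ideal since it is a downset and any two elements below $b$ have $b$ itself as an upper bound in $(b]$. By Theorem \ref{sep} there exists $P\in X(\mathbf{A})$ with $F\subseteq P$ and $P\cap I=\emptyset$, so $a\in P$ and $b\notin P$, witnessing $\beta(a)\neq\beta(b)$.

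No step is particularly subtle; the only thing I want to be careful about is confirming that $(b]$ really is an order ideal (this needs the existence of a common upper bound in $(b]$, which is $b$ itself) so that Theorem \ref{sep} applies. Distributivity of $\mathbf{A}$ enters only implicitly, through the hypotheses of Theorem \ref{sep}; no extra use of the defining distributivity condition is required in the argument itself.
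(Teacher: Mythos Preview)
Your proof is correct and follows the standard route. Note, however, that the paper does not actually supply its own proof of this theorem: it appears in the Preliminaries section as a result recalled from \cite{CelaniTopological} and \cite{CelaniCalomino}, so there is no in-paper argument to compare against. The approach you give (homomorphism properties of $\beta$ plus injectivity via the filter--ideal separation Theorem~\ref{sep}) is exactly the standard one used in those references.

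One small inaccuracy in your closing remark: Theorem~\ref{sep} as stated in the paper is for an arbitrary semilattice, not only a distributive one, so distributivity does not in fact enter through its hypotheses. Distributivity is irrelevant to the bare isomorphism $\mathbf{A}\cong\beta[A]$; it matters elsewhere in the paper (e.g., for the characterization of irreducible filters via $F^{c}$ being an order ideal, and for the topological duality), but not for this particular statement. This does not affect the validity of your argument.
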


\subsection{$\mathcal{DS}$-spaces}

In this subsection we recall the duality for distributive semilattices
given in \cite{CelaniCalomino} and \cite{CelaniTopological} based
on a Stone style duality and we give some definitions that we will
need to extend it.

Let $\langle X,\mathcal{T}\rangle$ be a topological space. We will
denote by $\mathcal{KO}(X)$ the set of all compact and open subsets
of $X$ and let $D(X)$ be the set $D(X)=\{U:U^{c}\in\mathcal{KO}(X)\}$.
We will denote by $\mathcal{C}(X)$ the set of all non-empty closed
subsets of $X$. The closure of a subset $Y\subseteq X$ will be denoted
by $\Cl(Y)$. A subset $Y\subseteq X$ is \emph{saturated} if it is
an intersection of open sets. The smallest saturated set containing
$Y$ is the \emph{saturation} of $Y$ and will be denoted by $\sat(Y)$. 

We recall that the \emph{specialization order} of $\langle X,\mathcal{T}\rangle$
is defined by $x\preceq y$ if $x\in\Cl(\{y\})=\Cl(y)$. It is easy
to see that $\preceq$ is a reflexive and transitive relation. If
$X$ is $T_{0}$ then the relation $\preceq$ is a partial order.
The dual order of $\preceq$ will be denoted by $\leq$, i.e., $x\leq y$
if $y\in\Cl(x)$. Moreover, if $X$ is $T_{0}$ then $\Cl(x)=[x)$,
$\sat(Y)=(Y]$, and every open (resp. closed) subset is a downset
(resp. upset) respect to $\leq$.

Recall that a non-empty subset $Y\subseteq X$ of a topological space
$\langle X,\mathcal{T}\rangle$ is \emph{irreducible} if  $Y\subseteq Z\cup W$
for any closed subsets $Z$ and $W$, implies $Y\subseteq Z$ or $Y\subseteq W$.
A topological space $\langle X,\mathcal{T}\rangle$ is \emph{sober}
if for every irreducible closed set $Y\subseteq X$, there exists
a unique $x\in X$ such that $\Cl(x)=Y$. Each sober space is $T_{0}$.
The following definition is equivalent to the definition given by
G. Gr\"{a}tzer in \cite{Gratzer}.

\begin{definition}\cite{CelaniCalomino} A \emph{$DS$-space} is
a topological space $\langle X,\mathcal{T}\rangle$ such that:
\begin{enumerate}
\item The set of all compact and open subsets $\mathcal{KO}(X)$ forms
a basis for the topology $\mathcal{T}$ on $X$.
\item $\langle X,\mathcal{T}\rangle$ is sober. 
\end{enumerate}
\end{definition}

If $\langle X,\mathcal{T}\rangle$ is a $DS$-space, then $\langle D(X),\cap,X\rangle$
is a distributive semilattice (see \cite{Gratzer}).

Let $\langle X,\leq\rangle$ be a poset. Recall that a subset $K\subseteq X$
is called \emph{dually} \emph{directed} if for any $x,y\in K$ there
exists $z\in K$ such that $z\leq x$ and $z\leq y$. A subset $K\subseteq X$
is called \emph{directed} if for any $x,y\in K$ there exists $z\in K$
such that $x\leq z$ and $y\leq z$. 

\begin{theorem} Let $\langle X,\mathcal{T}\rangle$ be a topological
space with basis $\mathcal{K}$ of open and compact subsets for $\mathcal{T}$.
Then, the following conditions are equivalent:
\begin{enumerate}
\item $\langle X,\mathcal{T}\rangle$ is sober 
\item $\langle X,\mathcal{T}\rangle$ is $T_{0}$ and $\bigcap\{U:U\in\mathcal{L}\}\cap Y\neq\emptyset$
for each closed subset $Y$ and for any dually directed subset $\mathcal{L}\subseteq\mathcal{K}$
such that $Y\cap U\neq\emptyset$ for all $U\in\mathcal{L}$.
\end{enumerate}
\end{theorem}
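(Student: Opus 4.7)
The plan is to prove both implications and rely on the compactness of members of $\mathcal{K}$ together with the basis assumption.

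For $(1)\Rightarrow(2)$, sober trivially gives $T_0$. For the filtered intersection condition, I would argue by contradiction. Assume $Y$ is closed, $\mathcal{L}\subseteq\mathcal{K}$ is dually directed with $Y\cap U\neq\emptyset$ for all $U\in\mathcal{L}$, yet $Y\cap\bigcap\mathcal{L}=\emptyset$. Consider the poset $\mathcal{Z}$ of all closed subsets $Z\subseteq Y$ such that $Z\cap U\neq\emptyset$ for every $U\in\mathcal{L}$, ordered by reverse inclusion. To apply Zorn's lemma I would verify that a descending chain $\{Z_i\}$ in $\mathcal{Z}$ has $Z=\bigcap_i Z_i\in\mathcal{Z}$; the nonemptiness of $Z\cap U$ for each $U\in\mathcal{L}$ follows because $\{Z_i\cap U\}$ is a descending chain of nonempty relatively closed subsets of the compact set $U$, hence has nonempty intersection. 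Let $Z$ be a minimal element. I would then show $Z$ is irreducible: if $Z=Z_1\cup Z_2$ with both $Z_j\subsetneq Z$, minimality provides $U_j\in\mathcal{L}$ with $Z_j\cap U_j=\emptyset$, and dual directedness of $\mathcal{L}$ gives $U_3\in\mathcal{L}$ with $U_3\subseteq U_1\cap U_2$, whence $Z\cap U_3\subseteq(Z_1\cap U_1)\cup(Z_2\cap U_2)=\emptyset$, a contradiction. By sobriety, $Z=\Cl(x)$ for a unique $x$. For every $U\in\mathcal{L}$, $U$ is open and meets $\Cl(x)$, so $x\in U$; thus $x\in Y\cap\bigcap\mathcal{L}$, contradicting our assumption.

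For $(2)\Rightarrow(1)$, let $Y$ be a nonempty irreducible closed set. Put $\mathcal{L}=\{U\in\mathcal{K}:U\cap Y\neq\emptyset\}$; since $\mathcal{K}$ is a basis, $\mathcal{L}$ is nonempty. I would verify $\mathcal{L}$ is dually directed: given $U_1,U_2\in\mathcal{L}$, irreducibility of $Y$ prevents $Y\subseteq U_1^c\cup U_2^c$, so $Y\cap U_1\cap U_2\neq\emptyset$; choose $z$ in this intersection and, using the basis property, pick $U_3\in\mathcal{K}$ with $z\in U_3\subseteq U_1\cap U_2$. By hypothesis (2) select $x\in Y\cap\bigcap\mathcal{L}$. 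To show $\Cl(x)=Y$, the inclusion $\Cl(x)\subseteq Y$ is immediate. For the converse, if $y\in Y\setminus\Cl(x)$ then $X\setminus\Cl(x)$ is open and contains $y$, so the basis provides $U\in\mathcal{K}$ with $y\in U\subseteq X\setminus\Cl(x)$; then $U\in\mathcal{L}$ and $x\in U$, contradicting $U\cap\Cl(x)=\emptyset$. Uniqueness of such $x$ follows from $T_0$.

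The main obstacle is the Zorn's lemma step in $(1)\Rightarrow(2)$: one must confirm that the intersection of a descending chain in $\mathcal{Z}$ remains in $\mathcal{Z}$, which is precisely where the compactness of the members of $\mathcal{K}$ (together with the finite intersection property applied inside each $U$) is essential. Every other step is a routine use of irreducibility, the basis axiom, and $T_0$.
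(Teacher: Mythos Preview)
Your argument is correct. Note that the paper states this theorem without proof in its preliminaries section (it is a background fact, presumably drawn from the references \cite{CelaniTopological} and \cite{CelaniCalomino}), so there is no proof in the paper to compare against.

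A few minor remarks on your write-up. In $(1)\Rightarrow(2)$ the contradiction hypothesis $Y\cap\bigcap\mathcal{L}=\emptyset$ is never actually used: your Zorn argument produces the point $x$ directly, so the proof is really a direct one dressed up as a contradiction. Also, the Zorn step tacitly needs $\mathcal{L}\neq\emptyset$ (otherwise the minimal element of $\mathcal{Z}$ is $\emptyset$, which is not irreducible); this is harmless under the usual convention that dually directed families are nonempty, and in any case if $\mathcal{L}=\emptyset$ then $\bigcap\mathcal{L}=X$ and the conclusion reduces to $Y\neq\emptyset$, which is forced once $\mathcal{L}$ has at least one member. Finally, in the line ``$U$ is open and meets $\Cl(x)$, so $x\in U$'' you are using that any open set meeting $\overline{\{x\}}$ must contain $x$; this is standard but worth making explicit.
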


Let $\mathbf{A}\in\mathcal{DS}$. Consider $\mathcal{K}_{\mathbf{A}}=\{\beta(a)^{c}:a\in A\}$
and let $\mathcal{T}_{\mathbf{A}}$ be the topology generated by the
basis $\mathcal{K}_{\mathbf{A}}$. Then, $\langle X({\mathbf{A}}),\mathcal{T}_{\mathbf{A}}\rangle$
is a $DS$-space, called the \emph{dual} \emph{space} of $\mathbf{A}$
(see \cite{CelaniTopological} and \cite{CelaniCalomino}). Recall
that $Q\in\mathrm{\Cl}(P)$ iff $P\subseteq Q$, i.e., the specialization
dual order of $\langle X({\mathbf{A}}),\mathcal{T}_{\mathbf{A}}\rangle$
is the inclusion relation $\subseteq$. Also, recall that the lattices
$\Fi(\mathbf{A})$ and $\mathcal{C}(X(\mathbf{A}))$ are dually isomorphic
under the maps $F\mapsto\hat{F}$, where $\hat{F}=\{P\in X(\mathbf{A}):F\subseteq P\}=\bigcap\{\beta(a):a\in F\}$
for each $F\in\Fi(\mathbf{A})$ and $Y\mapsto F_{Y}$, where $F_{Y}=\{a\in A:Y\subseteq\beta(a)\}$
for each $Y\in\mathcal{C}(X(\mathbf{A}))$.

\subsection{Canonical extension}

Here we will give the basic definitions of the theory of canonical
extensions focused on (distributive) meet semilattices. The following
is an adaptation of the definition given in \cite{PalmigianoDunn}
for posets. This definition agrees with the definition of canonical
extensions for bounded distributive lattices and Boolean algebras
\cite{Gehrke -Jonsson2000,Jonsson y Tarski}.

\begin{definition}Let $\mathbf{A}$ be a a semilattice. A $\mathit{completion}$
of $\mathbf{A}$ is a semilattice embedding $e\colon A\rightarrow X$
where $X$ is a complete lattice. From now on, we will suppress $e$
and call $X$ a completion of $\mathbf{A}$ and assume that $\mathbf{A}$
is a subalgebra of $X$. 

\end{definition}

\begin{definition}Let $\mathbf{A}$ be a semilattice. Given a completion
$X$ of $\mathbf{A}$, an element of $X$ is called $\mathit{closed}$
provided it is the infimum in $X$ of some filter $F$ of $\mathbf{A}$.
We denote the set of all closed elements of $X$ by $K(X)$. Dually,
an element of $X$ is called $\mathit{open}$ provided it is the supremum
in $X$ of some order ideal $I$ of $\mathbf{A}$. We denote the set
of all open elements of $X$ by $O(X)$.  A completion $X$ of $\mathbf{A}$
is said to be $\mathit{dense}$ provided each element of $X$ is both
the supremum of all the closed elements below it and the infimum of
all the open elements above it. A completion $X$ of $\mathbf{A}$
is said to be $\mathit{compact}$ provided that whenever $D$ is a
non-empty dually directed subset of $A$, $U$ is a non-empty directed
subset of $A$, and $\bigwedge_{L}D\leq\bigvee_{L}U$, then there
exist $x\in D$ and $y\in U$ such that $x\leq y$. 

\end{definition}

\begin{definition}Let $\mathbf{A}$ be a semilattice. A $\textit{canonical extension}$
of $\mathbf{A}$ is a dense and compact completion of $\mathbf{A}$.

\end{definition}

\begin{theorem}Let $\mathbf{A}$ be a semilattice, then $\mathbf{A}$
has a canonical extension and it is unique up to an isomorphism that
fixes $\mathbf{A}$. 

\end{theorem}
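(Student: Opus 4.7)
The plan for existence is to construct the canonical extension concretely as $L = \mathrm{Up}(X(\mathbf{A}))$, the complete lattice of $\subseteq$-upsets of the dual space, with embedding $e = \beta$ provided by Theorem \ref{rep Hilbert}. Meets and joins in $L$ are intersection and union of upsets, so $L$ is automatically a complete lattice in which $\mathbf{A}$ sits as a sub-semilattice. I would then identify the closed elements as $\hat F = \bigcap\{\beta(a) : a \in F\}$ for $F \in \Fi(\mathbf{A})$, and the open elements as $\tilde I := \bigcup\{\beta(a) : a \in I\} = \{P \in X(\mathbf{A}) : P \cap I \neq \emptyset\}$ for $I \in \mathrm{Id}(\mathbf{A})$; in particular each principal upset $[P)$ coincides with $\hat P$ and is therefore closed.

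For density, every $U \in L$ equals $\bigcup_{P \in U}[P)$, exhibiting $U$ as the supremum of the closed elements below it. Conversely, given $Q \notin U$, the complement $I := Q^c$ is an order ideal by the characterization of irreducible filters recalled after Theorem \ref{sep}, and the open element $\tilde I$ satisfies $U \subseteq \tilde I$ (because $P \in U$ with $Q \notin U$ forces $P \not\subseteq Q$, hence $P \cap Q^c \neq \emptyset$) while $Q \notin \tilde I$. Thus $U$ is the infimum of the open elements above it.

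For compactness, I would take $D \subseteq A$ non-empty dually directed and $V \subseteq A$ non-empty directed satisfying $\bigcap_{a \in D} \beta(a) \subseteq \bigcup_{b \in V} \beta(b)$. The filter generated by $D$ is $F = \{x : \exists d \in D,\, d \leq x\}$ and the order ideal generated by $V$ is $J = \{x : \exists u \in V,\, x \leq u\}$. If no $d \in D$ were below any $u \in V$, then $F \cap J = \emptyset$, and Theorem \ref{sep} would yield $P \in X(\mathbf{A})$ with $F \subseteq P$ and $P \cap J = \emptyset$; such a $P$ lies in $\bigcap_{a \in D} \beta(a)$ but not in $\bigcup_{b \in V} \beta(b)$, contradicting the inclusion.

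For uniqueness, given two canonical extensions $L_1, L_2$, the standard argument first establishes in each $L_i$ the bridge identity $\bigwedge_{L_i} F \leq \bigvee_{L_i} I$ iff $F \cap I \neq \emptyset$; the nontrivial direction uses compactness applied to the dually directed set $F$ and directed set $I$, combined with the fact that $F$ is an upset. One then defines $\phi\colon L_1 \to L_2$ by $\phi(z) = \bigvee_{L_2}\{\bigwedge_{L_2} F : \bigwedge_{L_1} F \leq z\}$ and uses density together with the bridge identity in both extensions to show that $\phi$ is an order isomorphism fixing $\mathbf{A}$. The main obstacle I anticipate is this uniqueness bookkeeping: verifying that $\phi$ is well-defined, order-preserving, and bijective requires threading compactness and density through the interaction of closed and open elements in both extensions with some care.
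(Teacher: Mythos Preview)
The paper does not prove this theorem; it is recorded as a preliminary fact adapted from \cite{PalmigianoDunn}. Your concrete construction via $\mathrm{Up}(X(\mathbf{A}))$ is in fact the content of the \emph{next} lemma in the paper, which is stated only for $\mathbf{A}\in\mathcal{DS}$.

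Relative to the theorem as literally stated---for an arbitrary semilattice---your existence argument has a gap. In the density step (showing $U$ is the infimum of the open elements above it) you take $Q\notin U$, set $I:=Q^{c}$, and claim $I$ is an order ideal ``by the characterization of irreducible filters recalled after Theorem~\ref{sep}''. But the paper explicitly prefaces that equivalence with ``in a distributive semilattice $\mathbf{A}$''; without distributivity the complement of an irreducible filter need not be directed, so your separating open element $\tilde I$ is not available and the density verification breaks down. The general existence proof (as in \cite{PalmigianoDunn}) builds the completion abstractly from the Galois connection between $\Fi(\mathbf{A})$ and $\mathrm{Id}(\mathbf{A})$, not through the space $X(\mathbf{A})$ of irreducible filters. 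Your compactness argument (which uses only Theorem~\ref{sep}, stated for arbitrary semilattices) and your uniqueness sketch are correct and standard; if you add the hypothesis $\mathbf{A}\in\mathcal{DS}$, the whole argument goes through and amounts to a proof of the lemma the paper states immediately afterwards.
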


\begin{lemma}Let us consider a distributive semilattice with greatest
element $\mathbf{A}=\langle A,\wedge,1\rangle$. $\mathrm{\langle Up}(X(\mathbf{A})),\cap,\cup,X(\mathbf{A}),\emptyset\rangle$
is a canonical extension of $\mathbf{A}$, where $A\cong\beta[A]\subseteq\mathrm{Up}(X(\mathbf{A}))$.
We will call it `the' canonical extension.

\end{lemma}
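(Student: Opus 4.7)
The plan is to verify the four ingredients in the definition: (i) $\mathrm{Up}(X(\mathbf{A}))$ is a complete lattice, (ii) $\beta_{\mathbf{A}}$ is a semilattice embedding, (iii) density, and (iv) compactness. Parts (i) and (ii) are immediate: arbitrary unions and intersections of upsets are upsets, and Theorem \ref{rep Hilbert} already gives that $\beta_{\mathbf{A}}$ is an injective $\{\wedge,1\}$-homomorphism into $\mathrm{Up}(X(\mathbf{A}))$.

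Next I would identify the closed and open elements of the completion in terms of the dual space. Using the dual isomorphism between $\Fi(\mathbf{A})$ and $\mathcal{C}(X(\mathbf{A}))$ recalled after the definition of $DS$-spaces, for any filter $F$ one has $\bigwedge \beta[F] = \bigcap_{a\in F}\beta(a) = \widehat{F}$, so the closed elements of the completion coincide with the closed subsets of the $DS$-space $X(\mathbf{A})$. Dually, for an order ideal $I$, the supremum $\bigvee \beta[I]$ in $\mathrm{Up}(X(\mathbf{A}))$ is simply $\bigcup_{a\in I}\beta(a)$; these are the open elements.

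For density, take $U\in\mathrm{Up}(X(\mathbf{A}))$. For each $P\in U$, the principal filter $[P)$ in $\Fi(\mathbf{A})$ is just $P$ itself, so $\widehat{P}=\{Q\in X(\mathbf{A}):P\subseteq Q\}$ is a closed element contained in $U$ (since $U$ is an upset w.r.t.\ $\subseteq$) and containing $P$; hence $U=\bigcup_{P\in U}\widehat{P}$, the supremum of all closed elements below $U$. For the dual statement, fix $P\notin U$; I want an open element containing $U$ but missing $P$. Here I use the characterization of irreducible filters in a distributive semilattice: $I_P:=A\setminus P$ is an order ideal. The open element $\bigcup_{a\in I_P}\beta(a)$ clearly avoids $P$, and it contains every $Q\in U$ because $U$ being an upset forces $Q\not\subseteq P$, so some $a\in Q\setminus P$ witnesses $Q\in\beta(a)$. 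This step — invoking distributivity plus irreducibility to turn the complement of a point into an order ideal — is where the structure of $\mathcal{DS}$ really enters, and is the one place I would be most careful.

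For compactness, suppose $D\subseteq A$ is dually directed and $U\subseteq A$ is directed with $\bigcap_{d\in D}\beta(d)\subseteq \bigcup_{u\in U}\beta(u)$ in $\mathrm{Up}(X(\mathbf{A}))$. The filter $F$ generated by $D$ equals $[D)$ by dual-directedness, and the order ideal $I$ generated by $U$ equals $(U]$ by directedness; moreover an irreducible filter $P$ satisfies $P\cap U=\emptyset$ iff $P\cap I=\emptyset$ (using that $P$ is upward closed). If $F\cap I$ were empty, Theorem \ref{sep} would give $P\in X(\mathbf{A})$ with $F\subseteq P$ and $P\cap I=\emptyset$, so $P\in \bigcap_{d\in D}\beta(d)\setminus \bigcup_{u\in U}\beta(u)$, a contradiction. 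Hence there is $c\in F\cap I$, that is, $d\leq c\leq u$ for some $d\in D$, $u\in U$, yielding $d\leq u$ as required. The main obstacle throughout is essentially the infimum-of-opens half of density; everything else reduces to standard manipulations and the separation theorem.
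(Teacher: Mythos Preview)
Your argument is correct. The paper itself states this lemma without proof, so there is no authors' argument to compare against; your verification of completeness, embedding, density, and compactness is exactly the standard route and each step is sound. In particular, the two points you flagged as delicate are handled properly: for the meet-of-opens half of density you correctly use that $P^{c}$ is an order ideal when $P$ is irreducible in a distributive semilattice, and for compactness the passage from $D,U$ to the filter $[D)$ and the order ideal $(U]$ together with Theorem~\ref{sep} is precisely what is needed. One cosmetic remark: the sentence ``the principal filter $[P)$ in $\Fi(\mathbf{A})$ is just $P$ itself'' is slightly garbled---what you mean (and use) is simply that each $P\in X(\mathbf{A})$ is already a filter of $\mathbf{A}$, so $\widehat{P}$ is a closed element of the completion.
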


\section{Ideals and saturated subsets}

\label{section: Ideals}

In this section we present a particular family of saturated sets in
a $DS$-space, dual to the family of order ideals of a semilattice.

\begin{definition} Let $\langle X,\mathcal{T}\rangle$ be a $DS$-space.
$Z\subseteq X$ is a \emph{special basic saturated subset} if $Z=\bigcap\{W:W\in\mathcal{L}\}$ for some dually directed family
$\mathcal{L}\subseteq\mathcal{KO}(X)$. \end{definition}

We denote by $\mathcal{S}(X)$ the set of all special basic saturated subsets
of a $DS$-space $\langle X,\mathcal{T}\rangle$. Note that every
special basic saturated subset is a saturated set.

Let $\mathbf{A}\in\mathcal{DS}$. Let $I\in$ $\mathrm{Id}({\mathbf{A}})$.
We consider the following subset of $X({\mathbf{A}}):$
\[
\alpha(I)=\bigcap\{\beta(a)^{c}:a\in I\}=\{P\in X({\mathbf{A}}):I\cap P=\emptyset\}.
\]
It is clear that $\alpha(I)$ is a special basic saturated set of $\langle X({\mathbf{A}}),\mathcal{T}_{\mathbf{A}}\rangle$.
Let $Z\subseteq X({\mathbf{A}})$ be a special basic saturated set of
$X({\mathbf{A}})$. Consider the subset 
\[
I_{\mathbf{A}}(Z)=\{a\in A:\beta(a)\cap Z=\emptyset\}.
\]
It is easy to see that $I_{\mathbf{A}}(Z)$ is a downset of $\mathbf{A}$. 

\begin{remark}The special basic saturated subsets of a $DS$-space
$\langle X,\mathcal{T}\rangle$ are precisely the compact saturated
subsets of the topology.

\end{remark}

Given two posets $\langle X,\leq_{X}\rangle$ and $\langle Y,\leq_{Y}\rangle$,
a \emph{surjective order-isomorphism} from $\langle X,\leq_{X}\rangle$
to $\langle Y,\leq_{Y}\rangle$ is a surjective function $f\colon X\rightarrow Y$
with the property that for every $x$ and $y$ in $X$, $x\leq_{X}y$
if and only if $f(x)\leq_{Y}f(y)$. We say that the posets $\langle X,\leq_{X}\rangle$
and $\langle Y,\leq_{Y}\rangle$ are isomorphic if there exists a surjective
order-isomorphism $f\colon X\rightarrow Y$.

In the following result we prove that order ideals are in bijective
correspondece with the family of basic saturated subsets of $X(\mathbf{A})$.

\begin{theorem} Let $\mathbf{A}\in\mathcal{DS}$. Then the posets
$\langle\mathrm{Id}({\mathbf{A}}),\subseteq\rangle$ and $\langle\mathcal{S}(X(\mathbf{A})),\subseteq\rangle$
are dually isomorphic. \end{theorem}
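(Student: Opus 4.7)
The plan is to show that the two maps already introduced in this section, $\alpha\colon\mathrm{Id}(\mathbf{A})\to\mathcal{S}(X(\mathbf{A}))$ and $I_{\mathbf{A}}\colon\mathcal{S}(X(\mathbf{A}))\to\mathrm{Id}(\mathbf{A})$, are mutually inverse bijections, each of which reverses inclusion. The order-reversal is immediate from the definitions: $I_{1}\subseteq I_{2}$ gives $\alpha(I_{2})=\bigcap_{a\in I_{2}}\beta(a)^{c}\subseteq\bigcap_{a\in I_{1}}\beta(a)^{c}=\alpha(I_{1})$, and analogously $Z_{1}\subseteq Z_{2}$ implies $I_{\mathbf{A}}(Z_{2})\subseteq I_{\mathbf{A}}(Z_{1})$. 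The content is therefore in well-definedness and in the inverse identities.

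First I would verify well-definedness. If $I\in\mathrm{Id}(\mathbf{A})$, upward directedness of $I$ immediately gives that $\{\beta(a)^{c}:a\in I\}$ is dually directed in $\mathcal{KO}(X(\mathbf{A}))$: given $a,b\in I$, pick $c\in I$ with $a,b\leq c$, so that $\beta(c)^{c}\subseteq\beta(a)^{c}\cap\beta(b)^{c}$; hence $\alpha(I)\in\mathcal{S}(X(\mathbf{A}))$. For the other direction, fix $Z\in\mathcal{S}(X(\mathbf{A}))$ and, using that every compact open has the form $\beta(d)^{c}$, write $Z=\bigcap_{d\in J}\beta(d)^{c}$ with $\{\beta(d)^{c}:d\in J\}$ dually directed; since $\beta$ is order-reflecting (Theorem~\ref{rep Hilbert}), this translates into $J$ being an upward directed subset of $A$.

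The crucial claim, from which everything else drops out, is that $I_{\mathbf{A}}(Z)=(J]$. The inclusion $(J]\subseteq I_{\mathbf{A}}(Z)$ is routine: if $a\leq d$ with $d\in J$, then $\beta(a)\subseteq\beta(d)$ and $Z\subseteq\beta(d)^{c}$, so $\beta(a)\cap Z=\emptyset$. The reverse inclusion is where I would invoke the separation Theorem~\ref{sep}, and this is the main obstacle: suppose $a\in I_{\mathbf{A}}(Z)$ but $a\notin(J]$. Then the filter $[a)$ is disjoint from the order ideal $(J]$, since otherwise some $b\in A$ would satisfy $a\leq b\leq d$ for some $d\in J$. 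By Theorem~\ref{sep} there exists $P\in X(\mathbf{A})$ with $a\in P$ and $P\cap J=\emptyset$; then $P\in\beta(a)$ and $P\in\beta(d)^{c}$ for every $d\in J$, so $P\in\beta(a)\cap Z$, contradicting $a\in I_{\mathbf{A}}(Z)$. In particular, $I_{\mathbf{A}}(Z)$ is indeed an order ideal.

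The two inverse identities then follow quickly. Applied with $J=I$ (which already equals $(I]$), the claim gives $I_{\mathbf{A}}(\alpha(I))=I$. Conversely, with $Z=\bigcap_{d\in J}\beta(d)^{c}$ as above, we get $I_{\mathbf{A}}(Z)=(J]$, and then
\[
\alpha\bigl(I_{\mathbf{A}}(Z)\bigr)=\bigcap_{a\in (J]}\beta(a)^{c}=\bigcap_{d\in J}\beta(d)^{c}=Z,
\]
where the first equality uses that $J\subseteq(J]$ for one inclusion and that every $a\in(J]$ satisfies $\beta(a)^{c}\supseteq\beta(d)^{c}\supseteq Z$ for some $d\in J$ for the other. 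This establishes the dual order-isomorphism; the separation theorem is doing all the real work, with the remaining steps being bookkeeping around the order-reflecting embedding $\beta$.
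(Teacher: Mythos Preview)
Your proof is correct and reaches the same conclusion, but the organization differs from the paper's in a way worth noting. You package everything into a single claim, $I_{\mathbf{A}}(Z)=(J]$ for any presentation $Z=\bigcap_{d\in J}\beta(d)^{c}$ with $J$ directed, and derive both inverse identities and the ideal property of $I_{\mathbf{A}}(Z)$ from it; your only non-trivial tool is the prime-filter separation Theorem~\ref{sep}. The paper instead works topologically: it invokes the sobriety characterization of $DS$-spaces (a dually directed family of basic opens disjoint from a closed set has a single member already disjoint from it) to show that $I_{\mathbf{A}}(Z)$ is directed and that $I_{\mathbf{A}}(\alpha(I))=I$, and only brings in Theorem~\ref{sep} at the end to prove that $\alpha$ reflects inclusion. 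The two arguments are essentially interchangeable, since the sobriety criterion for $X(\mathbf{A})$ is itself established via Theorem~\ref{sep}; your route is slightly more self-contained because it does not pass through the topological statement, while the paper's route keeps the argument closer to the ambient duality by phrasing everything in terms of closed sets and compact opens.
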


\begin{proof} Let $Z\subseteq X({\mathbf{A}})$ be a special basic
saturated subset of $X({\mathbf{A}})$. We prove that $I_{\mathbf{A}}(Z)$
is an order ideal of $\mathbf{A}$ and $Z=\alpha(I_{\mathbf{A}}(Z))$.
Moreover, if $I$ is any order ideal of $\mathbf{A}$, then we prove
that $I=I_{\mathbf{A}}(\alpha(I))$.

It is clear that $I_{\mathbf{A}}(Z)$ is a downset of $\mathbf{A}$.
Let $a,b\in I_{\mathbf{A}}(Z)$. So, we have that $Z\cap(\beta_{\mathbf{A}}(a)\cup\beta_{\mathbf{A}}(b))=\emptyset$.
Since $Z=\bigcap\{\beta(a)^{c}:\beta(a)^{c}\in\mathcal{L}\}$ for
some dually directed family $\mathcal{L}\subseteq\mathcal{K}_{\mathbf{A}}$
and $\beta(a)\cup\beta(b)$ is a closed subset, there exists $\beta_{\mathbf{A}}(c)^{c}\in\mathcal{L}$
such that $\beta(c)^{c}\cap(\beta(a)\cup\beta(b))=\emptyset$. Thus,
$\beta(a)\cup\beta(b)\subseteq\beta(c)$ and $Z\cap\beta_{\mathbf{A}}(c)=\emptyset$,
i.e., $a,b\leq c$ and $c\in I_{\mathbf{A}}(Z)$. Therefore, $I_{\mathbf{A}}(Z)$
is an order ideal of $\mathbf{A}$ and we have that $\alpha(I_{\mathbf{A}}(Z))=\bigcap\{\beta(a)^{c}:Z\subseteq\beta(a)^{c}\}\subseteq\bigcap\{\beta(a)^{c}:\beta(a)^{c}\in\mathcal{L}\}=Z$.
The other inclusion is immediate.

Now, let $I$ be an order ideal. Let $b\in I_{\mathbf{A}}(\alpha(I))$.
Then $\beta(b)\cap\alpha(I)=\beta(b)\cap\bigcap\{\beta(a)^{c}:a\in I\}=\emptyset$.
Since $\beta_{\mathbf{A}}(b)$ is a closed subset, and the family
$\{\beta(a)^{c}:a\in I\}$ is dually directed, we get that there exists
$a\in I$ such that $\beta(b)\subseteq\beta(a)$. So, $b\leq a$,
and as $I$ is a downset, we have that $b\in I$. The other inclusion
is immediate.

Thus, we have a surjective function $\alpha\colon\mathrm{Id}({\mathbf{A}})\rightarrow\mathcal{S}(X(\mathbf{A}))$
with inverse function $I_{\mathbf{A}}\colon\mathcal{S}(X(\mathbf{A}))\rightarrow\mathrm{Id}({\mathbf{A}})$.
We prove that $\alpha$ is a dual order-isomorphism. Let $I_{1}$
and $I_{2}$ be two ideals of $\mathbf{A}$. Assume that $I_{1}\subseteq I_{2}$.
Let $P\in\alpha(I_{2})$. Then, $P\cap I_{2}=\emptyset$. It follows
that $P\cap I_{1}=\emptyset$, i.e., $P\in\alpha(I_{1})$.

Assume that $\alpha(I_{1})\subseteq\alpha(I_{2})$. Let $a\in I_{2}$
and suppose that $a\notin I_{1}$. Then $I_{1}\cap[a)=\emptyset$,
so there exists $P\in X({\mathbf{A}})$ such that $[a)\subseteq P$
and $P\cap I_{1}=\emptyset$. It follows that $P\in\alpha(I_{1})$
but $P\notin\alpha(I_{2})$ which is a contradiction. Therefore, $a\in I_{1}$.\end{proof}

\begin{remark} We note that for any $a\in A$, $\alpha((a])=\beta(a)^{c}$.
\end{remark}

For simplicity we will write $\alpha(a)$ instead of $\alpha((a])$. 

\begin{proposition}\label{Sat y cerr}Let $\mathbf{A}\in\mathcal{DS}$, let $Y\in\mathcal{C}(X(\mathbf{A}))$ and $Z\in\mathcal{S}(X(\mathbf{A}))$.
Then,
\[F_{Y}\cap I_{\mathbf{A}}(Z)=\emptyset\text{ iff }Y\cap Z\neq\emptyset.
\] 

\end{proposition}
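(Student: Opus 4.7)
The plan is to prove both implications by reducing each to the duality between filters and closed sets (via $Y \mapsto F_Y$, $F \mapsto \hat F$) together with the duality between order ideals and special basic saturated sets (via $I \mapsto \alpha(I)$, $Z \mapsto I_{\mathbf A}(Z)$) just established. Since $Y\in\mathcal{C}(X(\mathbf{A}))$ we have $Y=\hat{F_Y}$, so $P\in Y$ iff $F_Y\subseteq P$; and since $Z\in\mathcal{S}(X(\mathbf{A}))$ we have $Z=\alpha(I_{\mathbf A}(Z))$, so $P\in Z$ iff $P\cap I_{\mathbf A}(Z)=\emptyset$.

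For the direction $(\Leftarrow)$, I would argue by contradiction: pick $P\in Y\cap Z$ and suppose there exists $a\in F_Y\cap I_{\mathbf A}(Z)$. Since $a\in F_Y$ we have $Y\subseteq\beta(a)$, hence $a\in P$; since $a\in I_{\mathbf A}(Z)$ we have $\beta(a)\cap Z=\emptyset$, hence $a\notin P$, a contradiction.

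For the direction $(\Rightarrow)$, the key tool is the separation Theorem~\ref{sep}. Given that $F_Y\in\Fi(\mathbf{A})$ and $I_{\mathbf A}(Z)\in\mathrm{Id}(\mathbf{A})$ are disjoint, the theorem yields $P\in X(\mathbf{A})$ with $F_Y\subseteq P$ and $P\cap I_{\mathbf A}(Z)=\emptyset$. The first condition gives $P\in\hat{F_Y}=Y$ and the second gives $P\in\alpha(I_{\mathbf A}(Z))=Z$, so $Y\cap Z\neq\emptyset$.

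Neither step involves computational difficulty; the only subtlety is recognizing that the two representations $Y=\hat{F_Y}$ and $Z=\alpha(I_{\mathbf A}(Z))$ allow one to translate the set-theoretic statement $Y\cap Z\neq\emptyset$ into the algebraic statement that a disjoint filter/ideal pair can be separated by an irreducible filter. The main (mild) obstacle is to make sure the ideal $I_{\mathbf A}(Z)$ is indeed an order ideal, which is exactly the content of the preceding theorem, so Theorem~\ref{sep} applies without modification.
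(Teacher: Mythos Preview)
Your proof is correct and follows essentially the same approach as the paper: the $(\Rightarrow)$ direction uses the separation Theorem~\ref{sep} exactly as the paper does, and your $(\Leftarrow)$ direction simply spells out what the paper dismisses as ``straightforward.'' The identifications $Y=\hat{F_Y}$ and $Z=\alpha(I_{\mathbf A}(Z))$ you invoke are precisely the ones the paper relies on implicitly.
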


\begin{proof} Suppose that $F_{Y}\cap I_{\mathbf{A}}(Z)=\emptyset$.
Then, there exists $P\in X(\mathbf{A})$ such that $F_{Y}\subseteq P$
and $P\cap I_{\mathbf{A}}(Z)=\emptyset$, i.e., $P\in Y$ and $P\in Z$.
Thus, $Y\cap Z\neq\emptyset$. The rest of proof is straightforward.\end{proof}

Now, we are able to identify the topological structures that are the
closed and open elements of a canonical extension of a distributive
semilattice. 

\begin{lemma}Let $\mathbf{A}$ be a distributive semilattice. Let
us consider the canonical extension $\langle\mathrm{Up}(X(\mathbf{A})),\cap,\cup,X(\mathbf{A}),\emptyset\rangle$
and the DS-space $\langle X(\mathbf{A}),\mathcal{T}_{\mathbf{A}}\rangle$.
Then, $K(\mathrm{Up}(X(\mathbf{A})))=\mathcal{C}(X(\mathbf{A}))$
and $O(\mathrm{Up}(X(\mathbf{A})))=\{Z^{c}:Z\in\mathcal{S}(X(\mathbf{A}))\}$,
i.e., the closed elements of the canonical extension are exactly the
closed sets of the topology and the open elements of the canonical
extension are the complements of the special saturated sets of the
topology. 

\end{lemma}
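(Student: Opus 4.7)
The plan is to unfold the definitions of closed and open elements of the canonical extension and match them to the topological objects, exploiting the fact that in $\mathrm{Up}(X(\mathbf{A}))$ the meet is intersection and the join is union, together with the dictionaries between filters and closed sets and between order ideals and special basic saturated sets.

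For the closed elements, I would argue as follows. By definition, $c \in K(\mathrm{Up}(X(\mathbf{A})))$ iff $c = \bigwedge\{\beta(a) : a \in F\}$ in $\mathrm{Up}(X(\mathbf{A}))$ for some filter $F \in \Fi(\mathbf{A})$. Since meets in $\mathrm{Up}(X(\mathbf{A}))$ are intersections, this is precisely $\bigcap\{\beta(a) : a \in F\} = \hat{F}$. The dual isomorphism $F \mapsto \hat{F}$ between $\Fi(\mathbf{A})$ and $\mathcal{C}(X(\mathbf{A}))$ recalled in the preliminaries then shows that the elements of this form are exactly the closed subsets of $X(\mathbf{A})$. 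Conversely, any $Y \in \mathcal{C}(X(\mathbf{A}))$ equals $\widehat{F_Y}$ and is therefore the infimum of the filter $F_Y$, hence a closed element. This yields $K(\mathrm{Up}(X(\mathbf{A}))) = \mathcal{C}(X(\mathbf{A}))$.

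For the open elements, I would dualise: $u \in O(\mathrm{Up}(X(\mathbf{A})))$ iff $u = \bigvee\{\beta(a) : a \in I\}$ in $\mathrm{Up}(X(\mathbf{A}))$ for some $I \in \mathrm{Id}(\mathbf{A})$. Joins in $\mathrm{Up}(X(\mathbf{A}))$ are unions, so
\[
u \;=\; \bigcup\{\beta(a) : a \in I\} \;=\; \Bigl(\bigcap\{\beta(a)^{c} : a \in I\}\Bigr)^{c} \;=\; \alpha(I)^{c}.
\]
By the theorem proved just above, $\alpha : \mathrm{Id}(\mathbf{A}) \to \mathcal{S}(X(\mathbf{A}))$ is a bijection, so as $I$ ranges over order ideals, $\alpha(I)$ ranges over all special basic saturated subsets. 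Conversely, for any $Z \in \mathcal{S}(X(\mathbf{A}))$, the same theorem gives $Z = \alpha(I_{\mathbf{A}}(Z))$, whence $Z^{c} = \bigcup\{\beta(a) : a \in I_{\mathbf{A}}(Z)\}$ is the supremum of the order ideal $I_{\mathbf{A}}(Z)$, and hence an open element.

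There is no real obstacle here: the heavy lifting has already been done in the theorem identifying $\mathrm{Id}(\mathbf{A})$ with $\mathcal{S}(X(\mathbf{A}))$ and in the standard filter/closed-set correspondence. The only delicate point worth a line is the interpretation of meets and joins inside $\mathrm{Up}(X(\mathbf{A}))$ of a family coming from the embedding $a \mapsto \beta(a)$, which is unproblematic since $\beta[A]$ sits as a subsemilattice of the complete lattice $\mathrm{Up}(X(\mathbf{A}))$ and the latter's meets and joins are set-theoretic intersection and union.
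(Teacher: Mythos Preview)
Your argument is correct and is exactly the natural one: unfold the definitions of closed and open elements, use that meets and joins in $\mathrm{Up}(X(\mathbf{A}))$ are intersection and union, and invoke the already-established bijections $F\mapsto\hat{F}$ between $\Fi(\mathbf{A})$ and $\mathcal{C}(X(\mathbf{A}))$ and $I\mapsto\alpha(I)$ between $\mathrm{Id}(\mathbf{A})$ and $\mathcal{S}(X(\mathbf{A}))$. The paper in fact states this lemma without proof, so there is no alternative argument to compare against; your write-up simply fills in the omitted verification.
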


\begin{remark}Given a complete lattice $C$, we denote the set of
completely join prime elements by $J^{\infty}(C)$ and the set of completely
meet prime elements by $M^{\infty}(C)$.

Every element of $\mathrm{Up}(X(\mathbf{A}))$ is a join of completely
join prime elements and a meet of completely meet prime elements,
where $J^{\infty}(\mathrm{Up}(X(\mathbf{A})))=\{\hat{P}=[P):P\in X(\mathbf{A})\}$
and $M^{\infty}(\mathrm{Up}(X(\mathbf{A})))=\{\alpha(P^{c})^{c}=(P]^{c}:P\in X(\mathbf{A})\}$.

\end{remark}

\section{Representation and duality of monotonic distributive semilattices}

\begin{definition} Let $\mathbf{A}=\langle A,\wedge,1\rangle$ be
a distributive semilattice. A \textit{monotonic operator }is an operator
$m:A\rightarrow A$ that satisfies the following condition 
\[
\text{If }a\leq b\text{, then }ma\leq mb\text{ for all }a,b\in A.
\]

\end{definition}

The following result is immediate.

\begin{proposition} Let $\mathbf{A}\in\mathcal{DS}$ and let $m:A\rightarrow A$
be a unary function. Then the following conditions are equivalent:
\begin{enumerate}
\item For all $a,b\in A$, if $a\leq b$ then $ma\leq mb$,
\item $m(a\wedge b)\leq ma\wedge mb$ for all $a,b\in A$. 
\end{enumerate}
\end{proposition}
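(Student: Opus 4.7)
The plan is to establish the equivalence by two short, direct implications, using only the definition of the natural order $a \leq b$ iff $a \wedge b = a$.

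For $(1) \Rightarrow (2)$, I would start from the two semilattice inequalities $a \wedge b \leq a$ and $a \wedge b \leq b$, which hold in any meet-semilattice. Applying the monotonicity assumption $(1)$ to each, I obtain $m(a \wedge b) \leq ma$ and $m(a \wedge b) \leq mb$. Since $ma \wedge mb$ is the greatest lower bound of $ma$ and $mb$, this immediately yields $m(a \wedge b) \leq ma \wedge mb$, which is $(2)$.

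For $(2) \Rightarrow (1)$, I would assume $(2)$ and take $a, b \in A$ with $a \leq b$. By the characterization of the natural order, $a \leq b$ is equivalent to $a \wedge b = a$. Applying $m$ gives $ma = m(a \wedge b)$, and then $(2)$ yields $ma = m(a \wedge b) \leq ma \wedge mb \leq mb$, as desired.

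Both directions are routine and use only the semilattice structure; distributivity plays no role in this particular proposition, so I do not expect any real obstacle. The only point one might want to mention explicitly is the translation $a \leq b \iff a \wedge b = a$, which is the basic fact linking the algebraic and order-theoretic descriptions, and which was recalled earlier in the Preliminaries.
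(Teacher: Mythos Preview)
Your proof is correct and is exactly the standard argument one would expect; the paper itself does not spell out a proof at all, labeling the result ``immediate.'' Your two implications are the natural ones, and your remark that distributivity is irrelevant here is also accurate.
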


\begin{definition} Let $\mathbf{A}\in\mathcal{DS}$. The pair $\langle\mathbf{A},m\rangle$
such that $m$ is a monotonic operator defined on $\mathbf{A}$ is
called a \emph{monotonic distributive semilattice}. \end{definition}

The class of all monotonic distributive semilattices will be denoted by
$\mathcal{MDS}$.

Let $\langle\mathbf{A},m\rangle,\langle\mathbf{B},m\rangle\in\mathrm{\mathcal{MDS}}$.
We say that a homomorphism $h$ $\colon A\rightarrow B$ is a \emph{homomorphism
of monotonic distributive semilattices} if $h$ commutes with $m$,
i.e., if $h(ma)=mh(a)$ for all $a\in A$. We denote by $\mathcal{MDSH}$
the category of monotonic distributive semilattices and monotonic
distributive semilattice homomorphisms.

We will give two examples of monotonic distributive semilattices constructed
from certain relational systems. We shall use these examples in the
theory of representation and topological duality for monotonic distributive
semilattices.

Let $X$ be a set. A \emph{multirelation} on $X$ is a subset of the
Cartesian product $X\times\mathcal{P}(X)$, that is, a set of ordered
pairs $(x,Y)$ where $x\in X$ and $Y\subseteq X$ \cite{Duntsch-Orlowska-RewitzkyMultirelations2010}
\cite{Rewitzky2003}. We recall that in classical monotone modal logic
a neighborhood frame is a pair $\langle X,R\rangle$ where $X$ is
a set and $R\subseteq X\times\mathcal{P}(X)$, i.e., $R$ is a multirelation
(see \cite{Chellas} \cite{Hansen}). Now we give a generalization
of this notion.

\begin{definition} \label{SCneighborhood}An $S$\emph{-neighborhood}
frame is a triple $\langle X,\leq,R\rangle$ where $\langle X,\leq\rangle$
is a poset and $R$ is a subset of $X\times\mathcal{P}(X)$ such that
if $x\leq y$, then $R(y)\subseteq R(x)$ for all $x,y\in X$. For
each $U\in\mathrm{Up}(X)$ we define the set 
\begin{equation}
m_{R}(U)=\{x\in X:\forall Z\in R(x)~(Z\cap U\neq\emptyset)\}.\label{eq:op1}
\end{equation}
A $C$\emph{-neighborhood} frame is a triple $\langle X,\leq,G\rangle$
where $\langle X,\leq\rangle$ is a poset and $G$ is a subset of
$X\times\mathcal{P}(X)$ such that if $x\leq y$, then $G(x)\subseteq G(y)$
for all $x,y\in X$. For each $U\in\mathrm{Up}(X)$ we define the set
\begin{equation}
\mathbf{m}_{G}(U)=\{x\in X:\exists Y\in G(x)~(Y\subseteq U)\}.\label{eq:op2}
\end{equation}

\end{definition} 

\begin{lemma} Let $\langle X,\leq,R\rangle$ be an $S$-neighborhood frame and $\langle X,\leq,G\rangle$ be a $C$-neighborhood frame.
Then $\langle\mathrm{Up}(X),\cap,m_{R},X\rangle$ and $\langle\mathrm{Up}(X),\cap,\mathbf{m}_{G},X\rangle$
are monotonic distributive semilattices. \end{lemma}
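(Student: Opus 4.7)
The plan is to verify, for each of the two constructions separately, the two remaining things that need checking, since the paper has already pointed out that $\langle \mathrm{Up}(X),\cap,X\rangle$ is a semilattice, and distributivity in the semilattice sense comes for free from the fact that $\langle \mathrm{Up}(X),\cap,\cup,X,\emptyset\rangle$ is a distributive lattice (given $U\cap V\subseteq W$, take $U_{1}=U\cup W$ and $V_{1}=V\cup W$ so that $U_{1}\cap V_{1}=(U\cap V)\cup W=W$). So the work is to show that $m_{R}$ and $\mathbf{m}_{G}$ actually send $\mathrm{Up}(X)$ to $\mathrm{Up}(X)$, and that they are monotone with respect to $\subseteq$.

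First I would handle $m_{R}$. For well-definedness on $\mathrm{Up}(X)$, suppose $U\in\mathrm{Up}(X)$, $x\in m_{R}(U)$ and $x\leq y$. The frame condition gives $R(y)\subseteq R(x)$, so every $Z\in R(y)$ lies in $R(x)$ and therefore meets $U$; hence $y\in m_{R}(U)$. For monotonicity, if $U\subseteq V$ are upsets and $x\in m_{R}(U)$, then for each $Z\in R(x)$ we have $Z\cap U\neq\emptyset$, and a fortiori $Z\cap V\neq\emptyset$, so $x\in m_{R}(V)$.

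Next I would do $\mathbf{m}_{G}$ in the same fashion. If $U\in\mathrm{Up}(X)$, $x\in\mathbf{m}_{G}(U)$ and $x\leq y$, pick $Y\in G(x)$ with $Y\subseteq U$; the frame condition $G(x)\subseteq G(y)$ gives $Y\in G(y)$, whence $y\in\mathbf{m}_{G}(U)$. Monotonicity is even more direct: if $U\subseteq V$ and $Y\in G(x)$ witnesses $x\in\mathbf{m}_{G}(U)$, then the same $Y$ witnesses $x\in\mathbf{m}_{G}(V)$.

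There is no real obstacle here; the whole point of the two frame conditions (the decreasing condition on $R$ and the increasing condition on $G$) is to make the two operators preserve upward-closure, and the monotonicity of $m_{R}$ and $\mathbf{m}_{G}$ is a direct consequence of the way the defining formulas quantify over $R(x)$ and $G(x)$. The argument therefore reduces to four short routine checks, after which the proposition follows from the equivalence $\text{monotone}\iff m(a\wedge b)\leq ma\wedge mb$ established in the proposition just above.
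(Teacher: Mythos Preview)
Your proof is correct. The paper states this lemma without proof, treating it as immediate; the four routine checks you give (that $m_{R}$ and $\mathbf{m}_{G}$ preserve upsets via the respective frame conditions, and that both are order-preserving in $U$) are exactly the verifications one would expect, and your remark on distributivity of $\langle\mathrm{Up}(X),\cap,X\rangle$ is the standard argument.
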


We will represent the monotonic operator $m$ on a distributive semilattice
$\mathbf{A}$ as a multirelation on the dual space of $\mathbf{A}$,
where the canonical extension offers an advantageous point of view.
We consider two different ways of extending maps that agree with the
ones given in \cite{PalmigianoDunn} for posets, bounded distributive
lattices and Boolean algebras.

\begin{definition}Let $\mathbf{A}$ be a distributive semilattice.
Given a monotonic operation $m\colon A\rightarrow A$, we define the
maps 
\[
m^{\sigma},m^{\pi}:\mathrm{Up}(X(\mathbf{A}))\rightarrow\mathrm{Up}(X(\mathbf{A}))
\]
 by
\[
m^{\sigma}(X)=\bigcup\{\bigcap\{\beta(ma):Y\subseteq\beta(a)\}:X\supseteq Y\in\mathcal{C}(X(\mathbf{A}))\}
\]
and
\[
m^{\pi}(X)=\bigcap\{\bigcup\{\beta(ma):Z\subseteq\beta(a)^{c}\}:X^{c}\supseteq Z\in\mathcal{S}(X(\mathbf{A}))\}.
\]

\end{definition}

The two extensions of a map $m$ shown above are not always equal. Whether
we want to extend a particular additional operation using the $\sigma$-
or the $\pi$-extension depends on the properties of the particular
operation to be extended. The following lemma is a consequence of
Lemma 3.4 of \cite{PalmigianoDunn}.

\begin{lemma}\label{lemaim}Let $\langle\mathbf{A},m\rangle\in\mathcal{MDS}$.
The maps $m^{\sigma},m^{\pi}$ are monotonic extensions of $m$, i.e.,
$\langle\mathrm{Up}(X(\mathbf{A})),m^{\sigma}\rangle,\langle\mathrm{Up}(X(\mathbf{A})),m^{\pi}\rangle\in\mathcal{MDS}$
and $m^{\sigma}(\beta(a))=m^{\pi}(\beta(a))=\beta(ma)$ for all $a\in A$.
In addition, $m^{\sigma}\leq m^{\pi}$ with equality holding on $K(\mathrm{Up}(X(\mathbf{A})))\cup O(\mathrm{Up}(X(\mathbf{A})))$.
For $X\in\mathrm{Up}(X(\mathbf{A}))$, $Y\in\mathcal{C}(X(\mathbf{A}))$
and $Z\in\mathcal{S}(X(\mathbf{A}))$
\[\begin{array}{lll}
m^{\sigma}(X)&=&\bigcup\{m^{\sigma}(Y):X\supseteq Y\in\mathcal{C}(X(\mathbf{A}))\},\\
m^{\sigma}(Y)&=&\bigcap\{\beta(ma):Y\subseteq\beta(a)\},\\
m^{\pi}(X)&=&\bigcap\{m^{\pi}(Z^{c}):X^{c}\supseteq Z\in\mathcal{S}(X(\mathbf{A}))\},\\
m^{\pi}(Z^{c})&=&\bigcup\{\beta(ma):Z\subseteq\beta(a)^{c}\}.
\end{array}\]
So, $m^{\sigma}$ and $m^{\pi}$ send closed sets to closed sets and complements
of special saturated sets to complements of special saturated sets. 

\end{lemma}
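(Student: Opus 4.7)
The plan is to reduce the lemma to Lemma~3.4 of \cite{PalmigianoDunn} via the canonical-extension identifications from the previous lemma, namely $K(\mathrm{Up}(X(\mathbf{A})))=\mathcal{C}(X(\mathbf{A}))$ and $O(\mathrm{Up}(X(\mathbf{A})))=\{Z^{c}:Z\in\mathcal{S}(X(\mathbf{A}))\}$. Under these identifications the formulas defining $m^{\sigma}$ and $m^{\pi}$ are literally the standard $\sigma$- and $\pi$-extensions of the monotone map $m$ pulled back along the embedding $\beta$, so the cited lemma delivers monotonicity of both extensions, the inequality $m^{\sigma}\leq m^{\pi}$, and their coincidence on $K\cup O$ essentially for free. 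What remains is to extract the concrete formulas and to verify that the extensions preserve the subclasses $\mathcal{C}(X(\mathbf{A}))$ and $\{Z^{c}:Z\in\mathcal{S}(X(\mathbf{A}))\}$.

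First I would unwind the definitions on closed and on open inputs. For $Y\in\mathcal{C}(X(\mathbf{A}))$, plugging $X=Y$ into the defining union of $m^{\sigma}$ and observing that $Y'\subseteq Y''\subseteq Y$ implies $\{a:Y''\subseteq\beta(a)\}\subseteq\{a:Y'\subseteq\beta(a)\}$, the inner intersection is monotone in $Y'$, so the outer union collapses to $Y'=Y$. This yields $m^{\sigma}(Y)=\bigcap\{\beta(ma):Y\subseteq\beta(a)\}$, and the symmetric argument gives $m^{\pi}(Z^{c})=\bigcup\{\beta(ma):Z\subseteq\beta(a)^{c}\}$ for $Z\in\mathcal{S}(X(\mathbf{A}))$. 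Since $\beta(a)=\alpha(a)^{c}$ lies in both $\mathcal{C}(X(\mathbf{A}))$ and $\{Z^{c}:Z\in\mathcal{S}(X(\mathbf{A}))\}$, specializing the two formulas plus monotonicity of $m$ gives $m^{\sigma}(\beta(a))=m^{\pi}(\beta(a))=\beta(ma)$. For equality on $K\cup O$, given $Y\in\mathcal{C}(X(\mathbf{A}))$ and $a$ with $Y\subseteq\beta(a)$, monotonicity of $m^{\pi}$ yields $m^{\pi}(Y)\subseteq m^{\pi}(\beta(a))=\beta(ma)$; intersecting over all such $a$ produces $m^{\pi}(Y)\subseteq m^{\sigma}(Y)$, and the reverse inequality is the general $m^{\sigma}\leq m^{\pi}$. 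The dual computation, choosing $Y=\beta(a)$ for each $a$ with $Z\subseteq\beta(a)^{c}$, handles the open side.

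The main technical point is preservation of the two subclasses. The closed case is immediate: $m^{\sigma}(Y)=\bigcap\{\beta(ma):Y\subseteq\beta(a)\}$ is a topological intersection of closed subsets of $X(\mathbf{A})$ and hence itself closed. For the saturated case I would rewrite
\[
m^{\pi}(Z^{c})=\bigcup\{\beta(ma):a\in I_{\mathbf{A}}(Z)\},
\]
using that $\{a\in A:Z\subseteq\beta(a)^{c}\}=I_{\mathbf{A}}(Z)$; Section~\ref{section: Ideals} shows this is an order ideal, and monotonicity of $m$ then makes the image $\{ma:a\in I_{\mathbf{A}}(Z)\}$ upward directed, so its downward closure $J$ is again an order ideal of $\mathbf{A}$. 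Therefore $m^{\pi}(Z^{c})=\bigcup\{\beta(b):b\in J\}=\alpha(J)^{c}$, which is a complement of a special saturated set by definition of $\alpha$. The hardest conceptual step is precisely this last reassembly: preservation of the open elements is not automatic from the identification $O=\{Z^{c}:Z\in\mathcal{S}\}$ but requires the explicit order-ideal construction on the image side to rebuild a witness in $\mathcal{S}(X(\mathbf{A}))$.
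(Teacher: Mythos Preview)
Your approach is essentially the same as the paper's: the paper states the lemma as a consequence of Lemma~3.4 of \cite{PalmigianoDunn} without further argument, and you carry out exactly that reduction, supplying the concrete verifications (in particular the order-ideal argument showing $m^{\pi}(Z^{c})=\alpha(J)^{c}$) that make the citation effective. The details you provide are correct and more explicit than what appears in the paper.
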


Now we show how, using the $\sigma$-extension and the $\pi$-extension,
it is possible to define two multirelations on the dual space of $\mathbf{A}$. 

Let $\langle\mathbf{A},m\rangle\in\mathcal{MDS}$. Note that by
definition of $m^{\pi}$, for every $Z\in\mathcal{S}(X(\mathbf{A}))$ we have:

\begin{center}

\begin{tabular}{lll}
$P\in m^{\pi}(Z^{c})$ & $\Leftrightarrow$ &  $\exists a\in A$ such that $P\in\beta(ma)$ and $Z\subseteq\beta(a)^{c}$ \tabularnewline
 & $\Leftrightarrow$ & $\exists a\in A$ such that $ma\in P$ and $a\in I_{\mathbf{A}}(Z)$ \tabularnewline
 & $\Leftrightarrow$ & $m^{-1}(P)\cap I_{\mathbf{A}}(Z)\neq\emptyset$.\tabularnewline
\end{tabular}

\end{center}

So, for every $X\in\mathrm{Up}(X(\mathbf{A}))$ we get:

\begin{center}

\begin{tabular}{lll}
$P\in m^{\pi}(X)$ & $\Leftrightarrow$ & $\forall Z\in\mathcal{S}(X(\mathbf{A}))$ such that $Z\subseteq X^{c}$,
we have $P\in m^{\pi}(Z^{c})$\tabularnewline
 & $\Leftrightarrow$ & $\forall Z\in\mathcal{S}(X(\mathbf{A}))$ such that $Z\subseteq X^{c}$, we have $m^{-1}(P)\cap I_{\mathbf{A}}(Z)\neq\emptyset$. \tabularnewline
\end{tabular}

\end{center}

We define the relation 
\[
R_{m}\subseteq X(\mathbf{A})\times\mathcal{S}(X(\mathbf{A}))
\]
 by 
\begin{equation}
(P,Z)\in R_{m}\text{~iff~}m^{-1}(P)\cap I_{\mathbf{A}}(Z)=\emptyset.\label{eq:Relation sat}
\end{equation}
Consequently, the operation $m^{\pi}$ on $\mathrm{Up}(X(\mathbf{A}))$
can be defined in terms of the relation $R_{m}$ as:

\[
P\in m^{\pi}(X)\text{\text{~}iff }\text{~}\forall Z\in R_{m}(P)[Z\cap X\neq\emptyset].
\]

On the other hand, we can define another multirelation using the operation
$m^{\sigma}$. Note that for each $Y\in\mathcal{C}(X(\mathbf{A}))$
we have:

\begin{center}

\begin{tabular}{lcl}
$P\in m^{\sigma}(Y)$  & $\Leftrightarrow$ & $\forall a\in A$ such that $Y\subseteq\beta(a)$ we get $P\in\beta(ma)$\tabularnewline
 & $\Leftrightarrow$ & $\forall a\in A$ such that $a\in F_{Y}$ we get $ma\in P$\tabularnewline
 & $\Leftrightarrow$ & $F_{Y}\subseteq m^{-1}(P)$.\tabularnewline
\end{tabular}

\end{center}

So, for each $X\in\mathrm{Up}(X(\mathbf{A}))$ we obtain:

\begin{center}

\begin{tabular}{lll}
$P\in m^{\sigma}(X)$  & $\Leftrightarrow$ & $\exists Y\in\mathcal{C}(X(\mathbf{A}))$ such that $Y\subseteq X$
and $P\in m^{\sigma}(Y)$\tabularnewline
 & $\Leftrightarrow$ & $\exists Y\in\mathcal{C}(X(\mathbf{A}))$ such that $Y\subseteq X$
and $P\in m^{\sigma}(Y)$\tabularnewline
 & $\Leftrightarrow$ & $\exists Y\in\mathcal{C}(X(\mathbf{A}))$ such that $Y\subseteq X$
and $F_{Y}\subseteq m^{-1}(P)$.\tabularnewline
\end{tabular} 

\end{center}

Thus, we can define another relation $G_{m}\subseteq X(\mathbf{A})\times\mathcal{C}(X(\mathbf{A}))$
as 
\[
(P,Y)\in G_{m}\text{~iff~}F_{Y}\subseteq m^{-1}(P).
\]
Consequently, the operation $m^{\sigma}$ on $\mathrm{Up}(X(\mathbf{A}))$
can be defined in terms of the relation $G_{m}$ as:

\begin{equation}
P\in m^{\sigma}(X)\text{~iff~}\exists Y\in G_{m}(P)[Y\subseteq X].\label{eq:Relation clo}
\end{equation}

\begin{remark}Let $\langle\mathbf{A},m\rangle\in MDS$. Note that
$\langle X(\mathbf{A}),\subseteq,R_{m}\rangle$ and $\langle X(\mathbf{A}),\subseteq,G_{m}\rangle$
are an S-monotonic and a C-monotonic frame, respectively, where $m_{R_{m}}=m^{\pi}$
and $\mathbf{m}_{G_{m}}=m^{\sigma}$.

\end{remark}

Now, we are able to define the dual spaces of monotonic distributive
semilattices. Depending on the way we define the relation on the dual
space, there are two possible constructions of relational systems.
However, we will show that both systems are interdefinible. For each
monotonic operator, we can choose to work with either of them based
on its behavior. In the next section we will see how some additional
conditions affect the relations associated. 

Let $\langle X,\mathcal{K}\rangle$ be a $DS$-space. For each $U\in D(X)$
we define the subsets $L_{U}$ and $D_{U}$ of $\mathcal{P}(\mathcal{S}(X))$
and $\mathcal{P}(\mathcal{C}(X))$ as follows:
\[
L_{U}=\{Z\in\mathcal{S}(X):Z\cap U\neq\emptyset\}
\]
and 
\[
D_{U}=\{Y\in\mathcal{C}(X):Y\subseteq U\}.
\]

\begin{definition} \label{t esp mon} An \textit{$\mathcal{S}$-monotonic
}$DS$\textit{-space} is a structure $\langle X,\mathcal{T},R\rangle$,
where $\langle X,\mathcal{T}\rangle$ is a $DS$-space, and $R\subseteq X\times\mathcal{S}(X)$
is a multirelation such that 
\begin{enumerate}
\item $m_{R}(U)=\{x\in X:\forall Z\in R(x)[Z\cap U\neq\emptyset]\}\in D(X)$,
for all $U\in D(X)$ and
\item $R(x)=\bigcap\{L_{U}:U\in D(X)\text{ and }x\in m_{R}(U)\}$, for all
$x\in X$.
\end{enumerate}
We can also give an analogous definition of $\mathcal{C}$-\textit{monotonic
}$DS$\textit{-space} as a structure $\left\langle X,\mathcal{T},G\right\rangle $,
where $\left\langle X,\mathcal{T}\right\rangle $ is a $DS$-space
and $G\subseteq X\times\mathcal{C}(X)$ is a multirelation such that 
\begin{enumerate}
 \setcounter{enumi}{2}
\item $\mathbf{m}_{G}\left(U\right)=\{x\in X:\exists Y\in G\left(x\right)\left[Y\subseteq U\right]\}\in D(X)$
for all $U\in D(X)$, and
\item $G\left(x\right)={\textstyle \bigcap}\{(D_{U})^{c}:U\in D(X)\text{ and }x\in \mathbf{m}_{G}\left(U\right)^{c}\}$
for all $x\in X$.
\end{enumerate}
\end{definition}

\begin{lemma} Let $\langle X,\mathcal{T},R\rangle$ and $\left\langle X,\mathcal{T},G\right\rangle $
be an \textit{$\mathcal{S}$}-monotonic $DS$-space and a\textit{
$\mathcal{C}$}-monotonic $DS$-space respectively. Then, 
\begin{enumerate}
\item $R(y)\subseteq R(x)$ for all $x,y\in X$ such that $x\leq y$ and
\item $G(x)\subseteq G(y)$ for all $x,y\in X$ such that $x\leq y$.
\end{enumerate}
\end{lemma}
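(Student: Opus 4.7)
The plan is to exploit conditions (1)--(4) of Definition \ref{t esp mon} together with the key observation that every element of $D(X)$ is an upset with respect to $\leq$. Indeed, for $U\in D(X)$ we have $U^c\in\mathcal{KO}(X)$, which is open and hence a downset by the specialization-order remarks in the preliminaries, so $U$ itself is an upset. Consequently, conditions (1) and (3) guarantee that $m_R(U)$ and $\mathbf{m}_G(U)$ lie in $D(X)$ and are therefore upsets as well.

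For part (1), I would fix $x\leq y$ and $Z\in R(y)$, and prove $Z\in R(x)$ via the characterisation (2) of Definition \ref{t esp mon}. So the task reduces to showing $Z\cap U\neq\emptyset$ for every $U\in D(X)$ with $x\in m_R(U)$. Since $m_R(U)$ is an upset and $x\leq y$, we obtain $y\in m_R(U)$; unfolding the definition of $m_R$ at $y$ together with the assumption $Z\in R(y)$ yields $Z\cap U\neq\emptyset$. Applying (2) at $x$ then gives $Z\in R(x)$.

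For part (2), the argument is dual, using condition (4). Fix $x\leq y$ and $Y\in G(x)$; by (4) at $y$, it suffices to show $Y\not\subseteq U$ for each $U\in D(X)$ with $y\notin\mathbf{m}_G(U)$. Now $\mathbf{m}_G(U)^c$ is a downset (being the complement of an upset), so $y\in\mathbf{m}_G(U)^c$ and $x\leq y$ force $x\in\mathbf{m}_G(U)^c$. Then (4) at $x$ applied to $Y\in G(x)$ gives $Y\in(D_U)^c$, i.e.\ $Y\not\subseteq U$, which is what we needed.

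There is no substantive obstacle: the whole proof is an exercise in keeping the variance of $\leq$ straight, i.e.\ in remembering that opens (and hence the compact opens forming $\mathcal{KO}(X)$) are downsets while the sets in $D(X)$ are upsets. Once that observation is in place, both inclusions drop out from a one-line unpacking of (2) and (4), and the only real content of the statement is that the monotonicity required in Definition \ref{SCneighborhood} of $S$- and $C$-neighborhood frames is already built into the axiomatics of Definition \ref{t esp mon} through the condition that $m_R$ and $\mathbf{m}_G$ take values in $D(X)$.
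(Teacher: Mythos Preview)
Your proof is correct and follows essentially the same route as the paper's: both parts use the characterisations (2) and (4) of Definition~\ref{t esp mon} together with the fact that $m_R(U),\mathbf{m}_G(U)\in D(X)$ are upsets (so their complements are downsets) to transfer membership along $x\leq y$. Your write-up is slightly more explicit about \emph{why} elements of $D(X)$ are upsets, but the argument is the same.
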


\begin{proof}1. Suppose that $x\leq y$ and let $Z\in R(y)$. Let $U\in D(X)$
such that $x\in m_{R}(U)$. By (1) of Definition \ref{t esp mon},
$m_{R}(U)$ is an upset, then $y\in m_{R}(U)$. By (2) of Definition \ref{t esp mon} we have that
$Z\cap U\neq\emptyset$. Then, $Z\in\bigcap\{L_{U}:U\in D(X)\text{ and }x\in m_{R}(U)\}=R(x)$.

2. Suppose that $x\leq y$. Let $Y\in G(x)$. Let $U\in D(X)$ such
that $y\in \mathbf{m}_{G}(U)^{c}$. By (3) of Definition \ref{t esp mon},
$\mathbf{m}_{G}(U)^{c}$ is a downset, then $x\in \mathbf{m}_{G}(U)^{c}$. By (4) of Definition \ref{t esp mon} we
have that $Y\cap U^{c}\neq\emptyset$. Then, $Y\in\bigcap\{(D_{U})^{c}:U\in D(X)\text{ and }y\in \mathbf{m}_{G}(U)^{c}\}=G(x)$.\end{proof}

As a corollary we have that $\langle X,\leq,R\rangle$ is an S-neighborhood frame and $\langle X,\leq,G\rangle$
is a C-neighborhood frame. From Definition \ref{SCneighborhood},
we get that the algebras $\langle\mathrm{Up}(X),m_{R}\rangle$ and
$\langle\mathrm{Up}(X),\mathbf{m}_{G}\rangle$, considering the operators defined
by \ref{eq:op1} and \ref{eq:op2}, are monotonic distributive semilattices.
In consequence, by (1) and (3) of Definition \ref{t esp mon}, $\langle D(X),m_{R}\rangle$
and $\langle D(X),\mathbf{m}_{G}\rangle$ are monotonic distributive semilattices
considering the operators restricted to $D(X)$. 

Now, we will see how we get a kind of space from the other.

\begin{definition} Let $\left\langle X,\mathcal{T}\right\rangle $
be a $DS$-space. Let $\phi_{X}\colon\mathcal{P}(\mathcal{S}(X))\rightarrow\mathcal{P}(\mathcal{C}(X))$
be the function defined by 
\[
\phi_{X}(S)=\{Y\in\mathcal{C}(X):\forall Z\in S\ [Y\cap Z\neq\emptyset]\}
\]
 and let $\psi_{X}\colon\mathcal{P}(\mathcal{C}(X))\rightarrow\mathcal{P}(\mathcal{S}(X))$
be the function defined by 
\[
\psi_{X}(C)=\{Z\in\mathcal{S}(X):\forall Y\in C\ [Y\cap Z\neq\emptyset]\}.
\]

\end{definition}

It is easy to see that 
\[
C\subseteq\phi_{X}(S)\text{ iff }S\subseteq\psi_{X}(C),
\]
 for all $S\subseteq\mathcal{S}(X)$ and $C\subseteq\mathcal{C}(X)$.
It follows that the pair $(\phi_{X},\psi_{X})$ is a Galois connection. 

\begin{proposition} 
\begin{enumerate}
\item Given an $\mathcal{S}$-monotonic $DS$-space $\langle X,\mathcal{T},R\rangle$,
the relation $G_{R}\subseteq X\times\mathcal{C}(X)$ defined as 
\[
\left(x,Y\right)\in G_{R}\text{ iff }Y\in\phi_{X}(R(x))
\]
is such that $\langle X,\mathcal{T},G_{R}\rangle$ is a $\mathcal{C}$-monotonic
$DS$-space and $m_{R}(U)=\mathbf{m}_{G_{R}}(U)$ for all $U\in D(X)$.
\item Given a $\mathcal{C}$-monotonic $DS$-space $\langle X,\mathcal{T},G\rangle$,
the relation $R_{G}\subseteq X\times\mathcal{S}(X)$ defined as 
\[
\left(x,Z\right)\in R_{G}\text{ iff }Z\in\psi_{X}(G(x))
\]
is such that $\langle X,\mathcal{T},R_{G}\rangle$ is a $\mathcal{S}$-monotonic
$DS$-space and $\mathbf{m}_{G}(U)=m_{R_{G}}(U)$ for all $U\in D(X)$. 
\end{enumerate}
\end{proposition}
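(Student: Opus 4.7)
The two statements are dual under the Galois connection $(\phi_X,\psi_X)$, so my plan is to handle (1) in detail and indicate the parallel moves for (2). In each case the strategy has three steps: first establish the operator identity ($m_R = \mathbf{m}_{G_R}$, respectively $\mathbf{m}_G = m_{R_G}$); then deduce the ``preservation of $D(X)$'' clause (clause (3) for $G_R$, clause (1) for $R_G$) by transport from the hypothesis on the original space; and finally verify the recoverability clauses (4) and (2), which carry the real content.

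For the operator identity in (1), the implication $\mathbf{m}_{G_R}(U) \subseteq m_R(U)$ is immediate: if $Y \in \mathcal{C}(X)$ witnesses $x \in \mathbf{m}_{G_R}(U)$, then any $Z \in R(x)$ satisfies $\emptyset \neq Y \cap Z \subseteq U \cap Z$. Conversely, $U \in D(X)$ is itself a closed set in the topology, so $U$ can serve as the required witness $Y := U$ (with brief care for the empty case). The symmetric identity for (2) is obtained analogously, now using $U^c \in \mathcal{KO}(X) \subseteq \mathcal{S}(X)$ as a canonical witness in the contrapositive: if $x \notin \mathbf{m}_G(U)$ then every $Y \in G(x)$ meets $U^c$, so $U^c \in R_G(x)$, and $U^c \cap U = \emptyset$ forces $x \notin m_{R_G}(U)$.

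The substantive part is the recovery clause (4): to show $G_R(x) = \bigcap\{(D_U)^c : U \in D(X),\, x \notin \mathbf{m}_{G_R}(U)\}$. The $\subseteq$ direction is routine, since if $Y \in G_R(x)$ were contained in some $U \in D(X)$, then $Y$ would witness $x \in \mathbf{m}_{G_R}(U)$, excluding such $U$ from the indexing set. For the reverse, I would argue contrapositively: if $Y \notin G_R(x)$, pick $Z \in R(x)$ with $Y \cap Z = \emptyset$. Invoking the Remark that $\mathcal{S}(X)$ coincides with the compact saturated subsets of the topology, $Z$ is compact, and $Z \subseteq Y^c$. Since $\mathcal{KO}(X)$ is a basis for $\mathcal{T}$, a finite subcover produces $W \in \mathcal{KO}(X)$ with $Z \subseteq W \subseteq Y^c$; the set $U := W^c \in D(X)$ then satisfies both $Y \subseteq U$ and $Z \cap U = \emptyset$, so $x \notin m_R(U) = \mathbf{m}_{G_R}(U)$, as required. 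Clause (2) for $R_G$ is the mirror image: given $Z \in \mathcal{S}(X)$ not in $R_G(x)$, extract a $Y \in G(x)$ with $Y \cap Z = \emptyset$ and apply the same finite-subcover trick to $Z \subseteq Y^c$, producing $U \in D(X)$ with $Y \subseteq U$ (so $x \in \mathbf{m}_G(U) = m_{R_G}(U)$) while $Z \cap U = \emptyset$.

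The main obstacle is precisely this compactness-based separation step, which upgrades a pointwise disjointness $Y \cap Z = \emptyset$ between a closed set and a compact saturated set into a witnessing $U \in D(X)$ whose complement is compact-open. Every other assertion in the proposition reduces to definition chasing and the already-proved operator identity once this geometric ingredient is in place.
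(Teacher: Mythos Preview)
Your proposal is correct and follows essentially the same route as the paper's proof: the operator identity via the witness $Y:=U$ (resp.\ $Z:=U^c$), then the contrapositive argument for the recovery clause based on separating a closed $Y$ from a special saturated $Z$ by some $U\in D(X)$. The only difference is cosmetic: where the paper simply asserts ``since $Z\in\mathcal{S}(X)$ and $Y\in\mathcal{C}(X)$, there exists $U\in D(X)$ with $Z\subseteq U^{c}$ and $Y\cap U^{c}=\emptyset$'', you unpack this separation explicitly via compactness of $Z$ and a finite subcover from the basis $\mathcal{KO}(X)$, which is exactly the content behind the paper's one-line claim.
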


\begin{proof} 1. Let $\langle X,\mathcal{T},R\rangle$ be an $\mathcal{S}$-monotonic
$DS$-space. We will see that $m_{R}(U)=\mathbf{m}_{G_{R}}(U)$ for all $U\in D(X)$.
Let $U\in D(X)$ and $x\in m_{R}(U)$. Then, for all $Z\in R(x)$
we have that $Z\cap U\neq\emptyset$ and since $U\in\mathcal{C}(X)$,
$U\in G_{R}(x)$. From $U\subseteq U$, we get that $x\in \mathbf{m}_{G_{R}}(U)$.
Now, suppose that $x\in \mathbf{m}_{G_{R}}(U)$. So, there exists $Y\in G_{R}(x)$
such that $Y\subseteq U$ and since for all $Z\in R(x)$ we have that
$Y\cap Z\neq\emptyset$, then $Z\cap U\neq\emptyset$ for all $Z\in R(x)$
and thus $x\in m_{R}(U)$.

We have proved that $m_{R}(U)=\mathbf{m}_{G_{R}}(U)$ and since $\langle X,T,R\rangle$
is an $\mathcal{S}$-monotonic $DS$-space, $\mathbf{m}_{G_{R}}(U)\in D(X)$.

Now, we will see that $G_{R}\left(x\right)=\bigcap\{(D_{U})^{c}:U\in D(X)\text{ and }x\in \mathbf{m}_{G_{R}}\left(U\right)^{c}\}$
for all $x\in X$. Let $x\in X$. It is easy to prove the inclusion
$G_{R}\left(x\right)\subseteq\bigcap\{(D_{U})^{c}:U\in D(X)\text{ and }x\in \mathbf{m}_{G_{R}}\left(U\right)^{c}\}$.
To prove the other inclusion, let $Y\in\bigcap\{(D_{U})^{c}:U\in D(X)\text{ and }x\in \mathbf{m}_{G_{R}}\left(U\right)^{c}\}$
and suppose that $Y\notin G_{R}(x)$. So, there exists $Z\in R(x)$
such that $Z\cap Y=\emptyset$. Since $Z\in\mathcal{S}(X)$ and $Y\in\mathcal{C}(X)$,
there exists $U\in D(X)$ such that $Z\subseteq U^{c}$ and $Y\cap U^{c}=\emptyset$.
Then $Z\cap U=\emptyset$, $x\notin m_{R}(U)=\mathbf{m}_{G_{R}}(U)$ and $Y\subseteq U$,
i.e., $Y\in D_{U}$, which is a contradiction.

2. Let $\langle X,\mathcal{T},G\rangle$ be a $\mathcal{C}$-monotonic
$DS$-space. We will see that $\mathbf{m}_{G}(U)=m_{R_{G}}(U)$ for all $U\in D(X)$.
Let $U\in D(X)$ and $x\in m_{R_{G}}(U)$. Suppose that $x\notin \mathbf{m}_{G}(U)$.
Then, for all $Y\in G(x)$, $Y\cap U^{c}\neq\emptyset$. So, $U^{c}\in R_{G}(x)$
which contradicts the fact that $x\in m_{R_{G}}(U)$. Now, suppose
that $x\in \mathbf{m}_{G}(U)$. Then there exists $Y\in G(x)$ such that $Y\subseteq U$.
Let $Z\in R_{G}(x)$. So, we have that $Y\cap Z\neq\emptyset$, then
$Z\cap U\neq\emptyset$. Thus $x\in m_{R_{G}}(U)$.

We have proved that $\mathbf{m}_{G}(U)=m_{R_{G}}(U)$ and since $\langle X,\mathcal{T},G\rangle$
is a $\mathcal{C}$-monotonic $DS$-space, $m_{R_{G}}(U)\in D(X)$.

Now, we will see that $R_{G}\left(x\right)=\bigcap\{L_{U}:U\in D(X)\text{ such that }x\in m_{R_{G}}(U)\}$
for all $x\in X$. Let $x\in X$. The proof of the inclusion $R_{G}\subseteq\bigcap\{L_{U}:U\in D(X)\text{ such that }x\in m_{R_{G}}(U)\}$
is easy. Let $Z\in\bigcap\{L_{U}:U\in D(X)\text{ such that }x\in m_{R_{G}}(U)\}$
and suppose that $Z\notin R_{G}(x)$. So, there exists $Y\in G(x)$
such that $Z\cap Y=\emptyset$. Since $Z\in\mathcal{S}(X)$ and $Y\in\mathcal{C}(X)$,
there exists $U\in D(X)$ such that $Z\subseteq U^{c}$ and $Y\cap U^{c}=\emptyset$.
Then, $Y\subseteq U$, $x\in \mathbf{m}_{G}(U)=m_{R_{G}}(U)$ and $Z\cap U=\emptyset$,
i.e., $Z\notin L_{U}$, which is a contradiction. \end{proof}

\begin{proposition} \label{prop dual Hilbert space} Let $\langle\mathbf{A},m\rangle\in\mathcal{MDS}$.
Then $\langle X({\mathbf{A}}),\mathcal{T}_{\mathbf{A}},R_{m}\rangle$
is an $\mathcal{S}$-monotonic $DS$-space and $\langle X({\mathbf{A}}),\mathcal{T}_{\mathbf{A}},G_{m}\rangle$
is a $\mathcal{C}$-monotonic $DS$-space. \end{proposition}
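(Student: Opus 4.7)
The plan is to use the reformulations of $m^{\pi}$ and $m^{\sigma}$ given in the paragraphs preceding the definitions of $R_m$ and $G_m$, together with Lemma \ref{lemaim}. The key observation is that by construction we have $m_{R_m} = m^{\pi}$ and $\mathbf{m}_{G_m} = m^{\sigma}$ as operations on $\mathrm{Up}(X(\mathbf{A}))$, and that $D(X(\mathbf{A})) = \{\beta(a) : a \in A\}$ since the basis of the dual topology is $\mathcal{K}_{\mathbf{A}} = \{\beta(a)^c : a\in A\}$.

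For the $\mathcal{S}$-monotonic $DS$-space $\langle X(\mathbf{A}), \mathcal{T}_{\mathbf{A}}, R_m\rangle$, condition (1) of Definition \ref{t esp mon} is immediate: for any $U = \beta(a) \in D(X(\mathbf{A}))$, Lemma \ref{lemaim} gives $m_{R_m}(\beta(a)) = m^{\pi}(\beta(a)) = \beta(ma) \in D(X(\mathbf{A}))$. For condition (2), I would show $R_m(P) = \bigcap\{L_U : U \in D(X(\mathbf{A})),\, P \in m_{R_m}(U)\}$ by double inclusion. The left-to-right inclusion follows because, given $Z \in R_m(P)$ and any $a$ with $P \in m_{R_m}(\beta(a)) = \beta(ma)$, we get $a \in m^{-1}(P)$, which combined with $m^{-1}(P) \cap I_{\mathbf{A}}(Z) = \emptyset$ forces $a \notin I_{\mathbf{A}}(Z)$, i.e.\ $Z \cap \beta(a) \neq \emptyset$. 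For the right-to-left inclusion I would argue by contrapositive: if $Z \notin R_m(P)$, pick $a \in m^{-1}(P) \cap I_{\mathbf{A}}(Z)$; then $P \in \beta(ma) = m_{R_m}(\beta(a))$ while $Z \cap \beta(a) = \emptyset$, witnessing failure of the right-hand side.

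For the $\mathcal{C}$-monotonic $DS$-space $\langle X(\mathbf{A}), \mathcal{T}_{\mathbf{A}}, G_m\rangle$, the argument is dual. Condition (3) follows from $\mathbf{m}_{G_m}(\beta(a)) = m^{\sigma}(\beta(a)) = \beta(ma) \in D(X(\mathbf{A}))$. For condition (4), the inclusion $G_m(P) \subseteq \bigcap\{(D_U)^c : P \in \mathbf{m}_{G_m}(U)^c\}$ follows because if $F_Y \subseteq m^{-1}(P)$ and $U = \beta(a)$ with $ma \notin P$, then $a \notin F_Y$, hence $Y \not\subseteq \beta(a)$, i.e.\ $Y \notin D_U$. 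Conversely, if $Y \notin G_m(P)$, there is $a \in F_Y$ with $ma \notin P$, so $Y \subseteq \beta(a)$ and $P \in \beta(ma)^c = \mathbf{m}_{G_m}(\beta(a))^c$, so $Y \in D_{\beta(a)}$ violates the intersection.

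There is no genuine obstacle here, since all the real work has been done in Lemma \ref{lemaim} and in the discussion that reformulates $m^{\sigma}$ and $m^{\pi}$ via the multirelations $G_m$ and $R_m$ using Proposition \ref{Sat y cerr} and Theorem \ref{sep}. The only point that requires some care is bookkeeping between the two descriptions of $D(X(\mathbf{A}))$ (as $\{\beta(a) : a \in A\}$ versus as the complements of basic compact opens) and keeping straight the roles of $F_Y$ and $I_{\mathbf{A}}(Z)$ in translating conditions between the closed and saturated sides.
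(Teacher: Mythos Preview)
Your proposal is correct and follows essentially the same approach as the paper: both verify conditions (1) and (3) via Lemma \ref{lemaim} (giving $m_{R_m}(\beta(a))=\mathbf{m}_{G_m}(\beta(a))=\beta(ma)$), and both establish conditions (2) and (4) by the same double-inclusion argument, with the nontrivial direction handled by picking a witness $a\in m^{-1}(P)\cap I_{\mathbf{A}}(Z)$ (respectively $a\in F_Y$ with $ma\notin P$) to produce a $U=\beta(a)$ that separates. The only difference is cosmetic: the paper writes the intersections directly as $\bigcap\{L_{\beta(a)}:ma\in P\}$ and $\bigcap\{(D_{\beta(a)})^c:ma\notin P\}$, whereas you phrase them in terms of $U\in D(X(\mathbf{A}))$ and then translate via $D(X(\mathbf{A}))=\{\beta(a):a\in A\}$.
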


\begin{proof} Let $U\in D(X({\mathbf{A}}))$. By definition, $U=\beta(a)$
for some $a\in A$. By Lemma \ref{lemaim} we have that $m_{R_{m}}(\beta(a))=\mathbf{m}_{G_{m}}(\beta(a))=\beta(ma)\in D(X({\mathbf{A}}))$,
i.e., $m_{R_{m}}(U),\mathbf{m}_{G_{m}}(U)\in D(X({\mathbf{A}}))$ for all $U\in D(X({\mathbf{A}}))$. 

Now we will show that for all $P\in X({\mathbf{A}})$ 
\[
R_{m}(P)=\bigcap\{L_{\beta(a)}:ma\in P\}.
\]
Let $P\in X({\mathbf{A}})$. It is clear that $R_{m}(P)\subseteq\bigcap\{L_{\beta(a)}:ma\in P\}$.
On the other hand, let $Z\in\bigcap\{L_{\beta(a)}:ma\in P\}$, we
will prove that $Z\in R_{m}(P)$. Suppose, contrary to our claim,
that $Z\notin R_{m}(P)$. Then, there exists $a\in m^{-1}(P)$ such
that $Z\cap\beta(a)=\emptyset$. By assumption, $Z\in L_{\beta(a)}$,
i.e., $Z\cap\beta(a)\neq\emptyset$, which is a contradiction. Therefore,
$Z\in R_{m}(P)$.

The indentity $G_{m}\left(P\right)={\textstyle \bigcap}\{(D_{\beta(a)})^{c}:ma\notin P\}$
is proved similarly. \end{proof}

\begin{lemma}\label{rels}

Let $\langle\mathbf{A},m\rangle\in\mathcal{MDS}$. Then $R_{m}(P)=\psi_{X(\mathbf{A})}(G_{m}(P))$
and $G_{m}(P)=\phi_{X(\mathbf{A})}(R_{m}(P))$. Therefore the sets
$R_{m}(P)$ and $G_{m}(P)$ are closed sets of the Galois connection $(\phi_{X(\mathbf{A})},\psi_{X(\mathbf{A})})$.

\end{lemma}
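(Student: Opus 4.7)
The plan is to establish the two set-equalities $R_m(P) = \psi_{X(\mathbf{A})}(G_m(P))$ and $G_m(P) = \phi_{X(\mathbf{A})}(R_m(P))$ by double inclusion. Once both are in hand, the ``therefore'' part is automatic: a set is closed for the Galois connection $(\phi_{X(\mathbf{A})},\psi_{X(\mathbf{A})})$ precisely when it equals the double-application of the two adjoints, and the two equalities together yield $R_m(P) = \psi_{X(\mathbf{A})}\phi_{X(\mathbf{A})}(R_m(P))$ and $G_m(P) = \phi_{X(\mathbf{A})}\psi_{X(\mathbf{A})}(G_m(P))$.

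The easy inclusions $R_m(P) \subseteq \psi_{X(\mathbf{A})}(G_m(P))$ and $G_m(P) \subseteq \phi_{X(\mathbf{A})}(R_m(P))$ follow from a single observation. If $Z \in R_m(P)$, so $m^{-1}(P)\cap I_{\mathbf{A}}(Z)=\emptyset$, and $Y \in G_m(P)$, so $F_Y \subseteq m^{-1}(P)$, then $F_Y \cap I_{\mathbf{A}}(Z) = \emptyset$, and Proposition \ref{Sat y cerr} immediately gives $Y \cap Z \neq \emptyset$.

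For the reverse inclusions the strategy is to construct explicit witnesses using principal closed and principal basic saturated sets. To show $\psi_{X(\mathbf{A})}(G_m(P)) \subseteq R_m(P)$, argue contrapositively: suppose $a \in m^{-1}(P) \cap I_{\mathbf{A}}(Z)$ and take $Y := \beta(a) = \widehat{[a)} \in \mathcal{C}(X(\mathbf{A}))$. The duality between filters and closed sets gives $F_Y = [a)$, and monotonicity of $m$ with the upset property of $P$ yields $[a) \subseteq m^{-1}(P)$ (since $b\geq a$ implies $mb\geq ma\in P$), whence $Y \in G_m(P)$. But $Y \cap Z = \beta(a) \cap Z = \emptyset$ because $a \in I_{\mathbf{A}}(Z)$, contradicting $Z \in \psi_{X(\mathbf{A})}(G_m(P))$. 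Dually, for $\phi_{X(\mathbf{A})}(R_m(P)) \subseteq G_m(P)$, suppose $a \in F_Y$ with $ma \notin P$ and take $Z := \alpha(a) = \beta(a)^c \in \mathcal{S}(X(\mathbf{A}))$, so that $I_{\mathbf{A}}(Z) = (a]$; monotonicity with the upset property of $P$ (if $mb \in P$ and $mb \leq ma$ then $ma \in P$) gives $(a]\cap m^{-1}(P) = \emptyset$, i.e.\ $Z \in R_m(P)$, while $Y \cap Z = \emptyset$ follows from $Y \subseteq \beta(a)$, again a contradiction.

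No significant obstacle is anticipated. The only real conceptual step is recognising that the principal closed set $\beta(a)$ and the principal basic saturated set $\beta(a)^c$ associated with the offending element $a$ are exactly the right witnesses; once this is spotted, the remaining work reduces to routine manipulations with filters, order ideals, monotonicity of $m$, and Proposition \ref{Sat y cerr}.
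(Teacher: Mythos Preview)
Your proposal is correct and follows essentially the same route as the paper's own proof: the easy inclusions via Proposition~\ref{Sat y cerr}, and the reverse inclusions by producing the principal witness $\widehat{[a)}=\beta(a)$ (respectively $\alpha(a)=\beta(a)^{c}$) from an offending element $a$. The paper only writes out the first equality and dismisses the second as ``analogous''; you have spelled out both, and your explicit invocation of monotonicity for $[a)\subseteq m^{-1}(P)$ and $(a]\cap m^{-1}(P)=\emptyset$ makes transparent a step the paper leaves implicit.
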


\begin{proof} First, we will prove that $R_{m}(P)=\psi_{X(\mathbf{A})}(G_{m}(P))$.
Let $Z\in R_{m}(P)$. Then, $m^{-1}(P)\cap I_{\mathbf{A}}(Z)=\emptyset$.
Let $Y\in G_{m}(P)$. By definition, $F_{Y}\subseteq m^{-1}(P)$ and
we get that $F_{Y}\cap I_{\mathbf{A}}(Z)=\emptyset$. From Proposition
\ref{Sat y cerr}, $Y\cap Z\neq\emptyset$. Now, let $Z\in S(X(\mathbf{A}))$
and suppose that for all $Y\in G_{m}(P)$, $Z\cap Y\neq\emptyset$.
Let $a\in m^{-1}(P)\cap I_{\mathbf{A}}(Z)$. So, $[a)\subseteq m^{-1}(P)$
and by hypothesis $\widehat{[a)}\cap Z=\beta(a)\cap Z\neq\emptyset$.
Then $a\notin I_{\mathbf{A}}(Z)$ which is a contradiction. 

The other equality is proved analogously.\end{proof} 

From now on, we consider a monotonic $DS$-space as an $\mathcal{S}$-monotonic
$DS$-space. It is clear how we can construct one kind of space from
the other, and there is no particular reason we have chosen $\mathcal{S}$-monotonic
$DS$-spaces as our default other than to keep things simple and avoid
repetition obtaining similar theorems and propositions.

\begin{definition}Given $\langle\mathbf{A},m\rangle\in\mathcal{MDS}$,
the structure $\langle X({\mathbf{A}}),\mathcal{T}_{\mathbf{A}},R_{m}\rangle$
\emph{is the monotonic $DS$-space associated to} $\langle\mathbf{A},m\rangle$.

\end{definition}

\begin{definition}

The algebra $\langle D(X),m_{R}\rangle$ is the \emph{monotonic distributive
semilattice associated to}\textit{ }the monotonic $DS$-space $\langle X,\mathcal{T},R\rangle$.

\end{definition}

Now, we are able to enunciate the representation theorem.

\begin{theorem}[of Representation]\label{reo mon}Let $\langle\mathbf{A},m\rangle\in\mathcal{MDS}$.
Then, the structure \linebreak{}
$\langle\mathrm{Up}(X({\mathbf{A}})),\cap,m_{R_{m}},X({\mathbf{A}})\rangle$
is a monotonic distributive semilattice and the map $\beta\colon A\rightarrow\mathrm{Up}(X({\mathbf{A}}))$
defined by 
\[
\beta(a)=\{P\in X({\mathbf{A}}):a\in P\}
\]
is an injective homomorphism of monotonic distributive semilattices.
\end{theorem}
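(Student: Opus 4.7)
The strategy is to assemble the theorem from results already established in the excerpt, so the proof reduces to identifying which prior fact supplies which clause of the statement and then noting a single computational bridge. Theorem \ref{rep Hilbert} already guarantees that $\beta\colon A\to\mathrm{Up}(X(\mathbf{A}))$ is an injective semilattice homomorphism, i.e., $\beta(a\wedge b)=\beta(a)\cap\beta(b)$, $\beta(1)=X(\mathbf{A})$, and $\beta(a)=\beta(b)$ forces $a=b$. Thus the only genuinely new content of Theorem \ref{reo mon} is (i) checking that $\langle\mathrm{Up}(X(\mathbf{A})),\cap,m_{R_m},X(\mathbf{A})\rangle$ is a monotonic distributive semilattice, and (ii) verifying the commutation $\beta(ma)=m_{R_m}(\beta(a))$ for every $a\in A$.

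For (i), I would invoke Proposition \ref{prop dual Hilbert space}, which already shows that $\langle X(\mathbf{A}),\mathcal{T}_{\mathbf{A}},R_m\rangle$ is an $\mathcal{S}$-monotonic $DS$-space. As noted in the corollary following the lemma on $\mathcal{S}$- and $\mathcal{C}$-monotonic $DS$-spaces, this entails that $\langle X(\mathbf{A}),\subseteq,R_m\rangle$ is an $S$-neighborhood frame, and the lemma on $S$-neighborhood frames says precisely that $\langle\mathrm{Up}(X),\cap,m_R,X\rangle$ is then a monotonic distributive semilattice. So (i) is immediate; monotonicity of $m_{R_m}$ on all of $\mathrm{Up}(X(\mathbf{A}))$ follows because $Z\cap U\neq\emptyset$ is preserved when $U$ is enlarged.

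For (ii), the plan is to recognize that the definition of $R_m$ was rigged precisely so that $m^{\pi}$ coincides with $m_{R_m}$. Unfolding the chain of equivalences displayed between the definition of $m^\sigma,m^\pi$ and the definition of $R_m$, one reads off that for every $X\in\mathrm{Up}(X(\mathbf{A}))$ and every $P\in X(\mathbf{A})$,
\[
P\in m^{\pi}(X)\iff \forall Z\in R_m(P)\;[\,Z\cap X\neq\emptyset\,]\iff P\in m_{R_m}(X),
\]
so $m^\pi=m_{R_m}$ on $\mathrm{Up}(X(\mathbf{A}))$. Applying Lemma \ref{lemaim} at the basic clopen $\beta(a)$ then yields
\[
m_{R_m}(\beta(a))=m^{\pi}(\beta(a))=\beta(ma),
\]
which is exactly the required intertwining. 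Combined with (i) and with the semilattice part of Theorem \ref{rep Hilbert}, this completes the argument.

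The main obstacle of the proof is not in this theorem itself but was already paid for in Lemma \ref{lemaim} (the non-trivial fact that both canonical extensions of $m$ agree on $\beta(a)$ and equal $\beta(ma)$) and in Proposition \ref{prop dual Hilbert space} (the verification that the multirelation $R_m$ actually satisfies axioms (1)–(2) of an $\mathcal{S}$-monotonic $DS$-space). Given those, the representation theorem is a bookkeeping assembly: one shows $m^\pi=m_{R_m}$ by definition-chasing, and everything else is quoted.
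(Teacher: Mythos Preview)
Your proposal is correct and follows essentially the same approach as the paper: the paper's proof is the single sentence ``It follows from Theorem \ref{rep Hilbert} and the fact that for all $a\in A$, $m_{R_{m}}(\beta(a))=\beta(ma)$,'' and your argument is a faithful unpacking of exactly this, invoking Theorem \ref{rep Hilbert}, the $S$-neighborhood-frame lemma (via Proposition \ref{prop dual Hilbert space}) for the structure claim, and Lemma \ref{lemaim} together with the identification $m^{\pi}=m_{R_m}$ for the commutation $\beta(ma)=m_{R_m}(\beta(a))$.
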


\begin{proof} It follows from Theorem \ref{rep Hilbert} and the
fact that for all $a\in A$, $m_{R_{m}}(\beta(a))=\beta(ma)$. \end{proof}

\begin{corollary} \label{rep cor}Let $\langle\mathbf{A},m\rangle\in\mathcal{MDS}$.
Then, the map $\beta\colon A\rightarrow D(X(\mathbf{A}))$ defined
by 
\[
\beta(a)=\{P\in X({\mathbf{A}}):a\in P\}
\]
is an isomorphism of monotonic distributive  semilattices. \end{corollary}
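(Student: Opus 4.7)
The plan is to deduce the corollary from the Representation Theorem \ref{reo mon} by upgrading the injective homomorphism $\beta\colon A\to\mathrm{Up}(X(\mathbf{A}))$ to a bijection onto $D(X(\mathbf{A}))$, the $m$-preservation and injectivity being already in hand. Concretely, I would argue that $\beta[A]=D(X(\mathbf{A}))$, after which the corollary follows at once: by Theorem \ref{reo mon}, $\beta$ is a semilattice homomorphism with $m_{R_{m}}(\beta(a))=\beta(ma)$, so when $\beta$ is viewed with codomain $D(X(\mathbf{A}))$, it is an injective homomorphism commuting with the monotonic operators, and surjectivity then makes it an isomorphism in $\mathcal{MDS}$.

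For the inclusion $\beta[A]\subseteq D(X(\mathbf{A}))$, I would simply recall that the basis $\mathcal{K}_{\mathbf{A}}=\{\beta(a)^{c}:a\in A\}$ consists of compact open subsets of $\langle X(\mathbf{A}),\mathcal{T}_{\mathbf{A}}\rangle$. Hence for each $a\in A$, $\beta(a)^{c}\in\mathcal{KO}(X(\mathbf{A}))$, which means $\beta(a)\in D(X(\mathbf{A}))$ by definition of $D(X)$.

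The key step, and the main obstacle, is the reverse inclusion $D(X(\mathbf{A}))\subseteq\beta[A]$. Fix $U\in D(X(\mathbf{A}))$, so $U^{c}$ is compact and open. Since $\mathcal{K}_{\mathbf{A}}$ is a basis, there is a family $\{a_{i}\}_{i\in I}\subseteq A$ with $U^{c}=\bigcup_{i\in I}\beta(a_{i})^{c}$. Compactness of $U^{c}$ yields a finite subcover, so $U^{c}=\beta(a_{i_{1}})^{c}\cup\cdots\cup\beta(a_{i_{n}})^{c}$. Because $\beta$ is a meet-semilattice homomorphism (Theorem \ref{rep Hilbert}), the De Morgan manipulation
\[
U^{c}=\bigl(\beta(a_{i_{1}})\cap\cdots\cap\beta(a_{i_{n}})\bigr)^{c}=\beta(a_{i_{1}}\wedge\cdots\wedge a_{i_{n}})^{c}
\]
shows that $U=\beta(a)$ for $a:=a_{i_{1}}\wedge\cdots\wedge a_{i_{n}}\in A$. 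This closes the loop.

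Finally, I would note that $D(X(\mathbf{A}))$ really is a subalgebra of $\langle\mathrm{Up}(X(\mathbf{A})),\cap,m_{R_{m}},X(\mathbf{A})\rangle$ in $\mathcal{MDS}$: it is closed under $\cap$ (since $\beta[A]$ is), contains $X(\mathbf{A})=\beta(1)$, and is closed under $m_{R_{m}}$ because $m_{R_{m}}(\beta(a))=\beta(ma)\in D(X(\mathbf{A}))$ by Theorem \ref{reo mon}. The bijection $\beta\colon A\to D(X(\mathbf{A}))$ therefore is an isomorphism of monotonic distributive semilattices, as claimed.
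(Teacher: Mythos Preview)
Your argument is correct and matches the paper's intended reasoning: the corollary has no proof in the paper because it is meant to follow immediately from Theorem~\ref{reo mon} together with the underlying $DS$-duality of \cite{CelaniTopological,CelaniCalomino}, where the identity $\beta[A]=D(X(\mathbf{A}))$ is established (and is in fact used freely later, e.g.\ in the proof of Proposition~\ref{prop dual Hilbert space}). You have simply spelled out the compactness argument behind that identity, so your proof is a faithful unpacking of what the paper takes for granted.
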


We note that if $\langle X,\mathcal{T},R\rangle$ is a monotonic $DS$-space,
then we have that \linebreak{}
$\langle X(D(X)),\mathcal{T}_{D(X)},R_{m_{R}}\rangle$ is the monotonic
space associated to $\langle D(X),m_{R}\rangle$. In \cite{CelaniTopological}
Celani has proved that the map 
\[
H_{X}:X\rightarrow X(D(X))
\]
 defined by 
\[
H_{X}(x)=\{U\in D(X):x\in U\},
\]
is an homeomorphism between $DS$-spaces and an order isomorphism
with respect to $\leq$. Now we introduce the following definition.

\begin{definition} Let $\langle X_{1},\mathcal{T}_{1},R_{1}\rangle$
and $\langle X_{2},\mathcal{T}_{2},R_{2}\rangle$ be two monotonic
$DS$-spaces. A map $f\colon X_{1}\rightarrow X_{2}$ is an \emph{isomorphism
of }$DS$\emph{-spaces} if it satisfies,
\begin{enumerate}
\item $f$ is a homeomorphism,
\item $(x,Z)\in R_{1}$ if and only if $(f(x),f[Z])\in R_{2}$, for all
$x\in X_{1}$ and for each $Z\in\mathcal{S}(X_{1})$,
\end{enumerate}
where $f[Z]=\{f(z):z\in Z\}$.

\end{definition}

\begin{proposition}Let $\langle X_{1},\mathcal{T}_{1}\rangle$ and
$\langle X_{2},\mathcal{T}_{2}\rangle$ be two $DS$-spaces and let
$f\colon X_{1}\rightarrow X_{2}$ be a homeomorphism. Then, $f[Z]\in\mathcal{S}(X_{2})$
for all $Z\in\mathcal{S}(X_{1})$ and for all $S\in\mathcal{S}(X_{2})$
there exists $Z\in\mathcal{S}(X_{1})$ such that $S=f[Z]$. \end{proposition}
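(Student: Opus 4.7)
The plan is to exploit the Remark appearing just after Definition of $\mathcal{S}(X)$, which identifies the special basic saturated subsets of a $DS$-space with the compact saturated subsets of the topology. Once we have this characterisation, the proposition reduces to the well-known fact that a homeomorphism preserves both compactness and saturation.

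First I would show $f[Z]\in\mathcal{S}(X_2)$ for $Z\in\mathcal{S}(X_1)$. By the Remark, $Z$ is compact and saturated. Since $f$ is continuous, $f[Z]$ is compact in $X_2$. For saturation, I would use that a homeomorphism is also an open map (because $f^{-1}$ is continuous) together with the bijectivity of $f$: writing $Z=\bigcap\{U\in\mathcal{T}_1:Z\subseteq U\}$, one has
\[
f[Z]=f\!\left[\bigcap\{U\in\mathcal{T}_1:Z\subseteq U\}\right]=\bigcap\{f[U]:U\in\mathcal{T}_1,\, Z\subseteq U\},
\]
where the second equality uses that $f$ is a bijection (so arbitrary intersections are preserved); each $f[U]$ is open in $X_2$, so $f[Z]$ is saturated. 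Combining compactness and saturation and invoking the Remark again gives $f[Z]\in\mathcal{S}(X_2)$.

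For the surjectivity statement, given $S\in\mathcal{S}(X_2)$ I would set $Z:=f^{-1}[S]$. The map $f^{-1}\colon X_2\to X_1$ is itself a homeomorphism between $DS$-spaces, so the argument of the previous paragraph applied to $f^{-1}$ shows that $Z=f^{-1}[S]\in\mathcal{S}(X_1)$. Since $f$ is a bijection, $f[Z]=f[f^{-1}[S]]=S$, as required.

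The only mild subtlety is the interchange of $f$ with the intersection in the display above, but it relies solely on $f$ being injective, which is guaranteed by being a homeomorphism; no further obstacle is expected. In particular, one does not need to manipulate the original description of special basic saturated sets as intersections of dually directed families in $\mathcal{KO}(X)$, since the Remark lets us work with the cleaner "compact saturated" characterisation throughout.
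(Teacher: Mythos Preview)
Your argument is correct. The paper states this proposition without proof, so there is no approach to compare against; it is simply left as a routine fact about homeomorphisms. Your use of the Remark identifying $\mathcal{S}(X)$ with the compact saturated subsets is exactly the right shortcut, and the two steps (continuous image of compact is compact; a bijective open map sends an intersection of opens to an intersection of opens) are sound. The only point worth making explicit is that the family $\{f[U]:U\in\mathcal{T}_1,\ Z\subseteq U\}$ is in fact \emph{all} of the open neighbourhoods of $f[Z]$ in $X_2$, since $f$ is a homeomorphism and hence every open $V\supseteq f[Z]$ arises as $f[U]$ for $U=f^{-1}[V]\supseteq Z$; this confirms that the displayed intersection equals $\sat(f[Z])$ and not merely some intersection of opens containing $f[Z]$. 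With that small clarification the proof is complete.
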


\begin{remark} \label{prop Hx} Let $\langle X,\mathcal{T}\rangle$
be a $DS$-space. Then, $H_{X}[Z]\in\mathcal{S}(X(D(X))$ for all
$Z\in S(X)$ and for all $S\in\mathcal{S}(X(D(X))$ there exists $Z\in\mathcal{S}(X)$
such that $S=H_{X}[Z]$. Also, we have that $H_{X}[U]=\{H_{X}(u):u\in U\}=\beta_{D(X)}(U)$
for all $U\in D(X)$. Then, 
\[
Z\cap U=\emptyset\Leftrightarrow H_{X}[Z]\cap\beta_{D(X)}(U)=\emptyset
\]
 for all $Z\in\mathcal{S}(X)$ and $U\in D(X)$.

\end{remark}

\begin{theorem} \label{HX}Let $\langle X,\mathcal{T},R\rangle$
be a monotonic $DS$-space. Then, the map $H_{X}\colon X\rightarrow X(D(X))$
defined by 
\[
H_{X}(x)=\{U\in D(X):x\in U\}
\]
is an isomorphism of monotonic $DS$-spaces. \end{theorem}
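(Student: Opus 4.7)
The plan is to leverage the already-known fact that $H_X$ is a homeomorphism of $DS$-spaces (proved in \cite{CelaniTopological}) so that condition (1) of the definition of isomorphism of monotonic $DS$-spaces comes for free. All the work will go into condition (2), namely the equivalence
\[
(x,Z)\in R \ \Longleftrightarrow\ (H_X(x),H_X[Z])\in R_{m_R}.
\]
First, note that by Remark \ref{prop Hx}, $H_X[Z]\in\mathcal{S}(X(D(X)))$, so both sides make sense.

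The key move is to unpack both relations explicitly and match them. On the left side, using condition (2) of Definition \ref{t esp mon}, we have
\[
Z\in R(x) \ \Longleftrightarrow\ \forall U\in D(X)\,[\,x\in m_R(U)\ \Rightarrow\ Z\cap U\neq\emptyset\,].
\]
On the right side, by the definition of the relation $R_m$ associated with a monotonic algebra (equation (\ref{eq:Relation sat})) applied to $\langle D(X),m_R\rangle$, we have $(H_X(x),H_X[Z])\in R_{m_R}$ iff $m_R^{-1}(H_X(x))\cap I_{D(X)}(H_X[Z])=\emptyset$. Now $U\in m_R^{-1}(H_X(x))$ iff $m_R(U)\in H_X(x)$ iff $x\in m_R(U)$. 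Using Remark \ref{prop Hx} we get $U\in I_{D(X)}(H_X[Z])$ iff $\beta_{D(X)}(U)\cap H_X[Z]=H_X[U]\cap H_X[Z]=\emptyset$ iff $Z\cap U=\emptyset$. Therefore
\[
(H_X(x),H_X[Z])\in R_{m_R} \ \Longleftrightarrow\ \neg\exists U\in D(X)\,[\,x\in m_R(U)\ \wedge\ Z\cap U=\emptyset\,],
\]
which is exactly the right-hand side of the expression for $Z\in R(x)$ above. This settles condition (2).

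I do not expect a serious obstacle: once one recalls that $H_X$ transports $U\in D(X)$ to $\beta_{D(X)}(U)$ and $Z\in\mathcal{S}(X)$ to $H_X[Z]\in\mathcal{S}(X(D(X)))$ with the emptiness of intersections preserved (Remark \ref{prop Hx}), the computation is mechanical. The only subtle point is to remember that condition (2) in the definition of an $\mathcal{S}$-monotonic $DS$-space is precisely what allows us to recover $R(x)$ from the family of $U$'s with $x\in m_R(U)$; without this axiom the direction $(H_X(x),H_X[Z])\in R_{m_R}\Rightarrow Z\in R(x)$ would fail.
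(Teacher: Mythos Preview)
Your proof is correct and follows essentially the same approach as the paper's: both reduce to the equivalence $(x,Z)\in R\Leftrightarrow (H_X(x),H_X[Z])\in R_{m_R}$, unpack $R_{m_R}$ via the identities $U\in m_R^{-1}(H_X(x))\Leftrightarrow x\in m_R(U)$ and $U\in I_{D(X)}(H_X[Z])\Leftrightarrow Z\cap U=\emptyset$ (Remark~\ref{prop Hx}), and use condition~(2) of Definition~\ref{t esp mon} for the nontrivial direction. The only difference is presentational: the paper argues the two implications separately (the backward one by contradiction), whereas you package everything as a single chain of equivalences; your explicit remark that condition~(2) is exactly what is needed for the direction $(H_X(x),H_X[Z])\in R_{m_R}\Rightarrow (x,Z)\in R$ is a helpful clarification.
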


\begin{proof} By \cite{CelaniTopological} and \cite{CelaniCalomino},
it is only left to prove that $(x,Z)\in R$ iff $(H_{X}(x),H_{X}[Z])\in R_{m_{R}}$.

$\Rightarrow)$ Let $Z\in\mathcal{S}(X)$ such that $(x,Z)\in R$.
We will see that $H_{X}[Z]\cap\beta_{D(X)}(U)\neq\emptyset$ for all
$U\in m_{R}^{-1}(H_{X}(x))$. Let $U\in D(X)$ such that $U\in m_{R}^{-1}(H_{X}(x))$,
i.e., $m_{R}(U)\in H_{X}(x)$. Then, $x\in m_{R}(U)$. Since $Z\in R(x)$,
we get that $Z\cap U\neq\emptyset$ and by Remark \ref{prop Hx},
$H_{X}[Z]\cap\beta_{D(X)}(U)\neq\emptyset$. Therefore, $U\notin I_{D(X)}(H_{X}[Z])$.

$\Leftarrow)$ Suppose that $(H_{X}(x),H_{X}[Z])\in R_{m_{R}}$. Then,
$H_{X}[Z]\cap\beta_{D(X)}(U)\neq\emptyset$ for all $U\in m_{R}^{-1}(H_{X}(x))$,
i.e., for all $U\in D(X)$ such that $x\in m_{R}(U)$. We will prove
that $(x,Z)\in R$. To do so, suppose that $Z\notin R(x)$. From condition
(4) of Definition \ref{t esp mon} we have that there exists $U\in D(X)$
such that $x\in m_{R}(U)$ and $Z\cap U=\emptyset$. Then, by Remark
\ref{prop Hx}, $H_{X}[Z]\cap\beta_{D(X)}(U)=\emptyset$, which is
a contradiction. Therefore, $Z\in R(x)$. \end{proof}

By the following result we get that the dual spaces of monotonic
distributive semilattices are exactly those triples $\langle X,\mathcal{T},R\rangle$,
where $\langle X,\mathcal{T}\rangle$ is a $DS$-space, $R\subseteq X\times\mathcal{S}(X)$,
$m_{R}(U)\in D(X)$, for all $U\in D(X)$, and $\langle X,\mathcal{T},R\rangle$
satisfies any of the equivalent conditions of Theorem \ref{equivalent}.

\begin{lemma}\label{Rupset} Let $\langle X,\mathcal{T},R\rangle$
be a monotonic $DS$-space. Then $R(x)$ is an upset of \linebreak{}
$\langle\mathcal{S}(X),\subseteq\rangle$, i.e., for all $S,Z\in\mathcal{S}(X)$,
and for all $x\in X$, if $S\subseteq Z$ and $S\in R(x)$, then $Z\in R(x)$.
\end{lemma}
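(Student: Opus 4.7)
The plan is to derive the monotonicity of $R(x)$ directly from characterization (2) in Definition \ref{t esp mon}, which expresses $R(x)$ as an intersection of sets of the form $L_U$. Since each $L_U=\{W\in\mathcal{S}(X):W\cap U\neq\emptyset\}$ is manifestly upward closed under $\subseteq$ in $\mathcal{S}(X)$ (enlarging $W$ can only increase the intersection with $U$), any intersection of such sets is also upward closed.

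Concretely, suppose $S,Z\in\mathcal{S}(X)$ with $S\subseteq Z$ and $S\in R(x)$. By condition (2) of Definition \ref{t esp mon}, we have $S\in L_U$ for every $U\in D(X)$ with $x\in m_R(U)$; that is, $S\cap U\neq\emptyset$. Since $S\subseteq Z$, we get $\emptyset\neq S\cap U\subseteq Z\cap U$, so $Z\in L_U$ as well. As this holds for every such $U$, we conclude $Z\in\bigcap\{L_U:U\in D(X)\text{ and }x\in m_R(U)\}=R(x)$.

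There is essentially no obstacle here; the lemma is a one-line consequence of the fact that $L_U$ is an upset and that $R(x)$ was axiomatized in Definition \ref{t esp mon} as an intersection of such upsets. The only thing to be careful about is citing the correct clause of the definition, and making sure to use both of the relevant hypotheses ($S\in\mathcal{S}(X)$ to place $S$ in the domain of the $L_U$'s, and $Z\in\mathcal{S}(X)$ so that membership in $R(x)$ even makes sense for $Z$).
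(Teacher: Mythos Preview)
Your proof is correct and follows essentially the same approach as the paper: both arguments use the characterization $R(x)=\bigcap\{L_{U}:U\in D(X)\text{ and }x\in m_{R}(U)\}$ from Definition~\ref{t esp mon} and the evident upward-closedness of each $L_U$. The paper phrases it as a proof by contradiction (if $Z\notin R(x)$, find $U$ with $Z\cap U=\emptyset$ and derive $S\cap U=\emptyset$), whereas you argue directly, but the content is identical; note also that the paper's reference to ``condition (4)'' is a typo for condition (2), which you cite correctly.
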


\begin{proof} Let $S,Z\in\mathcal{S}(X)$, and $x\in X$, such that
$S\subseteq Z$ and $S\in R(x)$. If $Z\notin R(x)$, then by condition
(4) of Definition \ref{t esp mon}, there exists $U\in D(X)$ such
that $Z\cap U=\emptyset$ and $x\in m_{R}(U)$. But this implies that
$S\cap U=\emptyset$ and $x\in m_{R}(U)$, which is impossible because
$S\in R(x)$. Thus, $R(x)$ is an upset of $\langle\mathcal{S}(X),\subseteq\rangle$.\end{proof}

\begin{theorem} \label{equivalent} Let $\langle X,\mathcal{T}\rangle$
be a $DS$-space. Consider a relation $R\subseteq X\times\mathcal{S}(X)$
such that $m_{R}(U)=\{x\in X:\forall Z\in R(x)[Z\cap U\neq\emptyset]\}\in D(X)$
for all $U\in D(X)$. Then, the following conditions are equivalent,
\begin{enumerate}
\item $R(x)=\bigcap\{L_{U}:x\in m_{R}(U)\text{ and }U\in D(X)\}$ for all
$x\in X$,
\item For all $x\in X$ and for all $Z\in\mathcal{S}(X)$, if $(H_{X}(x),H_{X}[Z])\in R_{m_{R}}$
then $(x,Z)\in R$,
\item $m_{R}(Z^{c})=\bigcup\{m_{R}(U):Z\subseteq U^{c}\text{ and }U\in D(X)\}$
for all $Z\in\mathcal{S}(X)$, and $R(x)$ is an upset of $\langle\mathcal{S}(X),\subseteq\rangle$
for all $x\in X$.
\end{enumerate}
\end{theorem}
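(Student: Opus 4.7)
The plan is to establish the equivalences via $(1) \Leftrightarrow (2)$ and $(1) \Leftrightarrow (3)$, using the preceding results on $H_X$ in one direction and a self-witnessing trick ($Z \subseteq Z$) in the other. Throughout, note that the standing hypothesis $m_R(U) \in D(X)$ for all $U \in D(X)$ is condition (1) of Definition \ref{t esp mon}, so any one of (1), (2), (3) will upgrade $\langle X,\mathcal{T},R\rangle$ to a monotonic $DS$-space.

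For $(1) \Rightarrow (2)$, I would simply invoke Theorem \ref{HX}: under (1), $\langle X,\mathcal{T},R\rangle$ is a monotonic $DS$-space and $H_X$ is an isomorphism of such spaces, so in particular $(H_X(x), H_X[Z]) \in R_{m_R}$ implies $(x,Z) \in R$. For $(2) \Rightarrow (1)$, the inclusion $R(x) \subseteq \bigcap\{L_U : x \in m_R(U)\}$ is immediate from the definition of $m_R$. For the reverse inclusion, I would take $Z$ in the intersection and unpack $(H_X(x), H_X[Z]) \in R_{m_R}$: by Remark \ref{prop Hx}, $I_{D(X)}(H_X[Z]) = \{U \in D(X) : U \cap Z = \emptyset\}$, and $m_R^{-1}(H_X(x)) = \{U \in D(X) : x \in m_R(U)\}$, so the defining intersection is empty precisely because of the hypothesis that $Z \in L_U$ whenever $x \in m_R(U)$. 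Then (2) yields $Z \in R(x)$.

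For $(1) \Rightarrow (3)$, the upset part is Lemma \ref{Rupset}. In the equation, the inclusion $\supseteq$ follows from the monotonicity of $m_R$. For $\subseteq$, I plan to argue by contraposition: if $x \notin m_R(U)$ for every $U \in D(X)$ with $U \subseteq Z^c$, then for every $U$ with $x \in m_R(U)$ one has $U \not\subseteq Z^c$, i.e., $Z \cap U \neq \emptyset$; by (1) this means $Z \in R(x)$, and since $Z \cap Z^c = \emptyset$, we conclude $x \notin m_R(Z^c)$. For $(3) \Rightarrow (1)$, the easy inclusion is again immediate. For the reverse inclusion, take $Z \in \bigcap\{L_U : x \in m_R(U)\}$: no $U \subseteq Z^c$ can satisfy $x \in m_R(U)$, so the equation in (3) delivers $x \notin m_R(Z^c)$, hence some $S \in R(x)$ with $S \subseteq Z$; the upset property (the other half of (3)) then promotes $S$ to $Z$.

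I expect the main subtlety to lie in the $\subseteq$ direction of the equation in (3): one must pass from the single set $Z^c$ to the existence of a particular $U \in D(X)$ with $U \subseteq Z^c$ and $x \in m_R(U)$, and a naive attempt would try to extract such a $U$ from the directed family $\{W^c : W \in \mathcal{L}\}$ whose union is $Z^c$ via compactness. The self-witnessing observation that $Z \subseteq Z$, combined with (1) (or the upset condition of (3)), sidesteps this entirely and turns the argument into a one-line contraposition.
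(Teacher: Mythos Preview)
Your proposal is correct and follows essentially the same route as the paper: $(1)\Leftrightarrow(2)$ via Theorem~\ref{HX} and Remark~\ref{prop Hx}, and $(1)\Leftrightarrow(3)$ via the self-witnessing observation $Z\cap Z^{c}=\emptyset$ together with Lemma~\ref{Rupset}. The only differences are cosmetic---you phrase $(1)\Rightarrow(3)$ as a contraposition and give $(3)\Rightarrow(1)$ directly, whereas the paper uses a direct argument and a nested contradiction, respectively---but the underlying ideas are identical.
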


\begin{proof} $1.\Rightarrow2$. It was proved in the previous theorem.

$2.\Rightarrow1$. Let $x\in X$. The inclusion $R(x)\subseteq\bigcap\{L_{U}:x\in m_{R}(U)\}$
is clear. Let $Z\in\mathcal{S}(X)$ such that $Z\in\bigcap\{L_{U}:x\in m_{R}(U)\}$.
We will prove that $(H_{X}(x),H_{X}[Z])\in R_{m_{R}}$. Let $U\in D(X)$
such that $x\in m_{R}(U)$. Then, $Z\in L_{U}$, i.e., $Z\cap U\neq\emptyset$.
By Remark \ref{prop Hx}, we have that $H_{X}[Z]\cap\beta_{D(X)}(U)\neq\emptyset$.
Thus, we have that for all $U\in D(X)$ such that $U\in m_{R}^{-1}(H_{X}(x))$,
$H_{X}[Z]\cap\beta_{D(X)}(U)\neq\emptyset$, i.e., $U\notin I_{D(X)}(H_{X}[Z])$.
Therefore, $(H_{X}(x),H_{X}[Z])\in R_{m_{R}}$ and by assumption,
$Z\in R(x)$ and it follows that $\bigcap\{L_{U}:x\in m_{R}(U)\}\subseteq R(x)$.

$1.\Rightarrow3$. Let $x\in m_{R}(Z^{c})$. Then, for all $S\in R(x)$
we have that $S\cap Z^{c}\neq\emptyset$. So, $Z\notin R(x)$. By
assumption, $Z\notin\bigcap\{L_{U}:x\in m_{R}(U)\}$, i.e., there
exists $U\in D(X)$ such that $x\in m_{R}(U)$ and $Z\cap U=\emptyset$.
Thus, $x\in\bigcup\{m_{R}(U):Z\subseteq U^{c}\text{ and }U\in D(X)\}$.
The other inclusion is trivial. The last part is a consequence of
Lemma \ref{Rupset}.

$3.\Rightarrow1$. Let $x\in X$ and $Z\in\bigcap\{L_{U}:x\in m_{R}(U)\text{ and }U\in D(X)\}$.
Suppose that $Z\notin R(x)$. We will see that $x\in m_{R}(Z^{c})$.
On the contrary, suppose that $x\notin m_{R}(Z^{c})$. Then, there
exists $S\in R(x)$ such that $S\cap Z^{c}=\emptyset$. So, $S\subseteq Z$
and by assumption $Z\in R(x)$ which is a contradiction. Thus, $x\in m_{R}(Z^{c})=\bigcup\{m_{R}(U):Z\subseteq U^{c}\text{ and }U\in D(X)\}$,
i.e., there exists $U\in D(X)$ such that $x\in m_{R}(U)$ and $Z\cap U=\emptyset$, a contradiction. Therefore $Z\in R(x)$. The other inclusion
is trivial.\end{proof}

\subsection{Representation of homomorphisms}

In \cite{CelaniTopological} and \cite{CelaniCalomino} it was shown
that there exists a duality between homomorphisms of distributive
semilattices and certain binary relations called meet-relations. It
is also known that $DS$-spaces with meet-relations form a category.
Now, we shall study the representation of homomorphisms of monotonic
distributive semilattices.

Let $S\subseteq X_{1}\times X_{2}$ be a binary relation. Consider
the mapping $h_{S}\colon\mathcal{P}(X_{2})\rightarrow\mathcal{P}(X_{1})$
defined by 
\[
h_{S}(U)=\{x\in X_{1}:S(x)\subseteq U\}.
\] 

A \emph{meet-relation }between two $DS$-spaces $\langle X_{1},\mathcal{T}_{1}\rangle$
and $\langle X_{2},\mathcal{T}_{2}\rangle$ was defined as a subset
$S\subseteq X_{1}\times X_{2}$ satisfying the following conditions:
\begin{enumerate}
\item For every $U\in D(X_{2})$, $h_{S}(U)\in D(X_{1})$, and 
\item $S(x)=\bigcap\{U\in D(X_{2}):S(x)\subseteq U\}$ for all $x\in X_{1}$.
\end{enumerate}

If $S$ is a meet-relation, then $h_{S}$ is a homomorphism between
distributive semilattices.

On the other hand, let $\mathbf{A},\mathbf{B}\in\mathcal{DS}$. Let
$h\colon A\rightarrow B$ be a homomorphism. The binary relation $S_{h}\subseteq X({\mathbf{B}})\times X({\mathbf{A}})$
defined by 
\[
(P,Q)\in S_{h}\text{ iff }h^{-1}[P]\subseteq Q
\]
is a meet-relation, where $h^{-1}[P]=\{a\in A:h(a)\in P\}$.

\begin{definition} \label{cond rel}Let $\langle X_{1},\mathcal{T}_{1},R_{1}\rangle$
and $\langle X_{2},\mathcal{T}_{2},R_{2}\rangle$ be two monotonic
$DS$-spaces. Let us consider a meet-relation $S\subseteq X_{1}\times X_{2}$.
We say that $S$ is a \emph{monotonic meet-relation} if for all $x\in X_{1}$
and every $U\in D(X_{2})$ it satisfies 

\begin{equation}
U^{c}\in R_{2}[S(x)]\text{~iff~}S^{-1}[U^{c}]\in R_{1}(x)\label{eq:homo}
\end{equation}
 where $R_{2}[S(x)]=\{Z\in\mathcal{S}(X_{2}):\exists y\in S(x)\ [(y,Z)\in R_{2}]\}$.

\end{definition}

\begin{remark}Note that if $S\subseteq X_{1}\times X_{2}$ is a meet-relation
between two $DS$-spaces $\langle X_{1},\mathcal{T}_{1}\rangle$ and
$\langle X_{2},\mathcal{T}_{2}\rangle$, then $S^{-1}[U^{c}]=h_{S}(U)^{c}\in\mathcal{S}(X_{1})$.

\end{remark}

\begin{proposition} \label{equivalence homo}The condition (\ref{eq:homo})
is equivalent to the condition 
\[
h_{S}(m_{R_{2}}(U))=m_{R_{1}}(h_{S}(U))
\]
for all $U\in D_{\mathcal{K}_{2}}(X_{2})$, i.e., the mapping $h_{S}\colon D(X_{2})\rightarrow D(X_{1})$
is a homomorphism of monotonic distributive semilattices.\end{proposition}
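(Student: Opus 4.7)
The plan is to unpack both sides of the target equation $h_S(m_{R_2}(U)) = m_{R_1}(h_S(U))$ to membership statements, take complements, and then use the upset property of $R_i(x)$ from Lemma \ref{Rupset} to rewrite existence of a small element of $R_i(x)$ as membership of the \emph{target} itself.

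First, I would unfold the left-hand side: by definition of $h_S$ and $m_{R_2}$, we have $x \notin h_S(m_{R_2}(U))$ iff there exist $y \in S(x)$ and $Z \in R_2(y)$ with $Z \cap U = \emptyset$, i.e.\ $Z \subseteq U^c$. Since $U \in D(X_2)$ the set $U^c$ is compact open, hence $U^c \in \mathcal{S}(X_2)$, so by Lemma \ref{Rupset} the clause ``$\exists Z \in R_2(y)$ with $Z \subseteq U^c$'' simplifies to $U^c \in R_2(y)$. Therefore
\[
x \notin h_S(m_{R_2}(U)) \iff U^c \in R_2[S(x)].
\]
Symmetrically, unfolding $m_{R_1}(h_S(U))$ and using the Remark $S^{-1}[U^c] = h_S(U)^c \in \mathcal{S}(X_1)$ together with Lemma \ref{Rupset} applied this time in $X_1$, I would obtain
\[
x \notin m_{R_1}(h_S(U)) \iff S^{-1}[U^c] \in R_1(x).
\]

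With these two equivalences in hand, the equality $h_S(m_{R_2}(U)) = m_{R_1}(h_S(U))$ holding for every $U \in D(X_2)$ becomes, by taking complements pointwise in $x$, the biconditional
\[
U^c \in R_2[S(x)] \iff S^{-1}[U^c] \in R_1(x) \qquad (\forall x \in X_1, \forall U \in D(X_2)),
\]
which is precisely condition (\ref{eq:homo}). Thus (\ref{eq:homo}) is equivalent to $h_S$ commuting with the monotonic operators, and since $h_S$ is already a semilattice homomorphism by the duality recalled above, this says exactly that $h_S \colon D(X_2) \to D(X_1)$ is a homomorphism in $\mathcal{MDS}$.

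The only non-routine point is the application of Lemma \ref{Rupset}: it is what lets us pass from ``some $Z \in R_i(x)$ is contained in the distinguished saturated set'' to ``the distinguished saturated set itself lies in $R_i(x)$''. Without this upset property one would obtain only one direction of the equivalence directly, so I expect that step to be the small but essential hinge of the argument; everything else is bookkeeping with the definitions of $h_S$, $m_R$, and $R_2[S(x)]$.
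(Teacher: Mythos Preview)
Your argument is correct and follows essentially the same route as the paper's proof: both hinge on Lemma~\ref{Rupset} to replace ``some $Z\in R_i(\cdot)$ is contained in the target saturated set'' by ``the target saturated set itself lies in $R_i(\cdot)$'', and both use the Remark that $S^{-1}[U^c]=h_S(U)^c\in\mathcal{S}(X_1)$. The only difference is organisational: you first establish the two unconditional equivalences $x\notin h_S(m_{R_2}(U))\Leftrightarrow U^c\in R_2[S(x)]$ and $x\notin m_{R_1}(h_S(U))\Leftrightarrow S^{-1}[U^c]\in R_1(x)$ and then read off the result, whereas the paper threads the same computations through four implication chases (two per direction).
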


\begin{proof}

$\Rightarrow)$ Suppose that for all $x\in X_{1}$ and every $U\in D(X_{2})$, $U^{c}\in R_{2}[S(x)]$ if and only if $S^{-1}[U^{c}]\in R_{1}(x)$.
Let $x\in h_{S}(m_{R_{2}}(U))$, i.e., $S(x)\subseteq m_{R_{2}}(U)$.
Then, for all $y\in S(x)$ we have that $y\in m_{R_{2}}(U)$. So,
for all $y\in S(x)$ and for all $Z\in R_{2}(y)$ we have that $Z\cap U\neq\emptyset.$
Then, for all $y\in S(x)$, $U^{c}\notin R_{2}(y)$. Thus, $U^{c}\notin R_{2}[S(x)]$.
By hypothesis, $S^{-1}[U^{c}]\notin R_{1}(x)$. Therefore, $x\in m_{R_{1}}(S^{-1}[U^{c}]^{c})=m_{R_{1}}(h_{S}(U))$.
The other inclusion is obtained reverting the implications. 

$\Leftarrow)$ Suppose that $h_{S}$ is a homomorphism. Let $U^{c}\in R_{2}[S(x)]$.
Then, there exists $y\in S(x)$ such that $U^{c}\in R_{2}(y)$. So,
$y\notin m_{R_{2}}(U)$. Thus, $S(x)\nsubseteq m_{R_{2}}(U)$, i.e.,
$x\notin h_{S}(m_{R_{2}}(U))$. By hypothesis, $x\notin m_{R_{1}}(h_{S}(U))$,
i.e., there exists $Z\in R_{1}(x)$ such that $Z\cap h_{S}(U)=\emptyset$.
We have that $Z\subseteq h_{S}(U)^{c}$ and since $h_{S}(U)^{c}\in\mathcal{S}(X)$
we have that $S^{-1}[U^{c}]=h_{S}(U)^{c}\in R_{1}(x)$. The other
implication is obtained similarly.\end{proof}

Now, we will study the composition of monotonic meet-relations. Let
$X_{1}$, $X_{2}$ and $X_{3}$ be sets. Let us consider two
relations $S_{1}\subseteq X_{1}\times X_{2}$ and $S_{2}\subseteq X_{2}\times X_{3}$.
Then, the composition of $S_{1}$ and $S_{2}$ is the relation $S_{2}\circ S_{\text{1}}\subseteq X_{1}\times X_{3}$
defined by 
\[
S_{2}\circ S_{\text{1}}=\{(x,z)\in X_{1}\times X_{3}:\exists y\in X_{2}[(x,y)\in S_{1}\text{ and }(y,z)\in S_{2}\}.
\]

\begin{proposition} Let $\langle X_{1},\mathcal{T}_{1},R_{1}\rangle$,
$\langle X_{2},\mathcal{T}_{2},R_{2}\rangle$ and $\langle X_{3},\mathcal{T}_{3},R_{3}\rangle$
be three monotonic $DS$-spaces. Let us consider two monotonic meet-relations
$S_{1}\subseteq X_{1}\times X_{2}$ and $S_{2}\subseteq X_{2}\times X_{3}$.
Then, $S_{3}=S_{2}\circ S_{1}\subseteq X_{1}\times X_{3}$ is a monotonic
meet-relation. \end{proposition}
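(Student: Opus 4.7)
The plan is to reduce the problem to the algebraic characterization given in Proposition \ref{equivalence homo}, exploiting the fact that the contravariant assignment $S\mapsto h_S$ turns composition of relations into composition of semilattice homomorphisms. First I would verify the pointwise identity
\[
h_{S_2\circ S_1}\;=\;h_{S_1}\circ h_{S_2},
\]
which is a direct unfolding of definitions: $x\in h_{S_2\circ S_1}(U)$ means $(S_2\circ S_1)(x)\subseteq U$, i.e.\ $S_2(y)\subseteq U$ for every $y\in S_1(x)$, i.e.\ $S_1(x)\subseteq h_{S_2}(U)$, i.e.\ $x\in h_{S_1}(h_{S_2}(U))$.

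Next I would check that $S_3=S_2\circ S_1$ is a meet-relation between the underlying $DS$-spaces. Conditions (1) and (2) in the definition of meet-relation are exactly the content of the duality for homomorphisms of distributive semilattices established in \cite{CelaniTopological} and \cite{CelaniCalomino}; the composition of meet-relations is again a meet-relation in that category, precisely because it corresponds on the algebraic side to the composition $h_{S_1}\circ h_{S_2}$, which is a semilattice homomorphism $D(X_3)\to D(X_1)$. I would invoke this fact rather than reproving it.

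The new content is the monotonic clause (\ref{eq:homo}). Here I would avoid a direct set-theoretic manipulation of $R_3[S_3(x)]$ versus $S_3^{-1}[U^c]\in R_1(x)$ and instead work algebraically. By Proposition \ref{equivalence homo} the hypothesis that $S_1$ and $S_2$ are monotonic meet-relations translates to the two identities
\[
h_{S_1}(m_{R_2}(V))=m_{R_1}(h_{S_1}(V))\quad\text{for all }V\in D(X_2),
\]
\[
h_{S_2}(m_{R_3}(U))=m_{R_2}(h_{S_2}(U))\quad\text{for all }U\in D(X_3).
\]
Combining these with the composition identity above gives, for every $U\in D(X_3)$,
\[
h_{S_3}(m_{R_3}(U))=h_{S_1}(h_{S_2}(m_{R_3}(U)))=h_{S_1}(m_{R_2}(h_{S_2}(U)))=m_{R_1}(h_{S_1}(h_{S_2}(U)))=m_{R_1}(h_{S_3}(U)).
\]
Thus $h_{S_3}$ commutes with the modal operators, so by the converse direction of Proposition \ref{equivalence homo} the relation $S_3$ satisfies (\ref{eq:homo}) and is therefore a monotonic meet-relation.

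I do not anticipate a genuine obstacle: the whole point of the equivalence in Proposition \ref{equivalence homo} is to convert the quantifier-heavy condition (\ref{eq:homo}) into an equation between maps, after which functoriality of the correspondence $S\mapsto h_S$ closes the argument in one line. The only spot that requires a little care is making sure that the composition of meet-relations really is a meet-relation (condition (2) of the definition, which is not purely formal), but this is already established in the cited duality and can be quoted directly.
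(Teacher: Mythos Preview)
Your proposal is correct and follows essentially the same approach as the paper: the paper's proof consists of a single sentence invoking the identity $h_{S_{2}\circ S_{1}}=h_{S_{1}}\circ h_{S_{2}}$, Definition \ref{cond rel}, and Proposition \ref{equivalence homo}, which is exactly the line of argument you spell out in detail.
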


\begin{proof} It follows from the fact that $h_{S_{3}}(U)=h_{S_{2}\circ S_{1}}(U)=h_{S_{1}}\circ h_{S_{2}}(U)$
for all $U\in D_{\mathcal{K}_{3}}(X_{3})$, definition \ref{cond rel}
and proposition \ref{equivalence homo}. \end{proof}

\begin{proposition} Let $\langle X,\mathcal{T},R\rangle$ be a monotonic
$DS$-space. The specialization dual order $\leq\subseteq X\times X$
is a monotonic meet-relation. \end{proposition}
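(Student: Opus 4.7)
The plan is to verify that $\leq$ satisfies the two conditions of a meet-relation, and then check the additional identity (\ref{eq:homo}) of Definition \ref{cond rel}.

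First I would confirm that $\leq$ is a meet-relation. Recall $\leq(x) = [x) = \Cl(x)$. For any $U \in D(X)$ the set $U$ is closed in $\mathcal{T}$ and hence an upset with respect to $\leq$, so $[x) \subseteq U$ iff $x \in U$; this yields $h_{\leq}(U) = U \in D(X)$, which is condition (1) of a meet-relation. For condition (2), $[x) = \Cl(x)$ is a closed set of the $DS$-space $\langle X,\mathcal{T}\rangle$, and since $\mathcal{KO}(X)$ is a basis for $\mathcal{T}$ every closed set is an intersection of elements of $D(X)$; because each such $U$ is a $\leq$-upset, we obtain $[x) = \bigcap\{U \in D(X) : [x) \subseteq U\}$.

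Next I would verify (\ref{eq:homo}). For one side, $U^c$ is compact open and therefore a downset with respect to $\leq$, so
\[
\leq^{-1}[U^c] = \{y \in X : \exists z \in U^c,\ y \leq z\} = U^c,
\]
which is a special basic saturated set by the remark identifying $\mathcal{S}(X)$ with the compact saturated subsets. For the other side,
\[
R[\leq(x)] = R[[x)] = \bigcup_{y \geq x} R(y) = R(x),
\]
where the last equality uses both $x \in [x)$ and the decreasing property $R(y) \subseteq R(x)$ for $x \leq y$ proved earlier in the paper. Combining these,
\[
U^c \in R[\leq(x)] \ \Leftrightarrow\ U^c \in R(x) \ \Leftrightarrow\ \leq^{-1}[U^c] \in R(x),
\]
which is exactly condition (\ref{eq:homo}).

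The only care needed is to keep the various topological and order-theoretic correspondences straight: elements of $D(X)$ are $\leq$-upsets, their complements $U^c$ are $\leq$-downsets, and closed sets are intersections of $D(X)$-elements. Once these are in hand, together with the decreasing property of $R$, the verification is essentially bookkeeping, so I do not expect a substantial obstacle.
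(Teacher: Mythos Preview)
Your proposal is correct and follows essentially the same approach as the paper. The paper's proof omits the verification that $\leq$ is a meet-relation (this is taken as known from the cited duality for $DS$-spaces) and only checks condition (\ref{eq:homo}); your argument for (\ref{eq:homo}) via $\leq^{-1}[U^{c}]=U^{c}$ and $R[[x)]=R(x)$ is exactly the content of the paper's proof, just packaged slightly more explicitly.
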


\begin{proof} $\Rightarrow)$ Let $U\in D(X)$ and suppose that $U^{c}\in R([x))$.
Then, there exists $y\geq x$ such that $U^{c}\in R(y)$ and since
$R(y)\subseteq R(x)$, we have that $U^{c}\in R(x)$. As $U^{c}$
is a downset, $\leq^{-1}[U^{c}]=U^{c}$.

The other implication is trivial. \end{proof}

So, monotonic $DS$-spaces with monotonic meet-relations form a category
where the identity arrow is the specialization dual order. We will
denote this category by $\mathcal{MDSR}$. 

\begin{proposition}\label{homo cond} Let $\langle\mathbf{A},m_{\mathbf{A}}\rangle$,
$\langle\mathbf{B},m_{\mathbf{B}}\rangle\in\mathcal{MDS}$.
\begin{enumerate}
\item Let $h\colon A\rightarrow B$ be a monotonic homomorphism. Then, the
meet-relation $S_{h}$ satisfies condition (\ref{eq:homo}).
\item Let $h\colon A\rightarrow B$ be a homomorphism and suppose that the
meet-relation $S_{h}$ satisfies condition (\ref{eq:homo}). Then,
$h$ is monotonic. 
\end{enumerate}
\end{proposition}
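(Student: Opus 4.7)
The plan is to reduce both implications to a single computation via Proposition \ref{equivalence homo}, which tells us that condition (\ref{eq:homo}) for $S_h$ is equivalent to the induced map $h_{S_h}\colon D(X(\mathbf{A}))\to D(X(\mathbf{B}))$ commuting with the modal operators, namely $h_{S_h}(m_{R_{m_{\mathbf{A}}}}(U)) = m_{R_{m_{\mathbf{B}}}}(h_{S_h}(U))$ for every $U\in D(X(\mathbf{A}))$. This reformulation already does the heavy lifting, so only a bookkeeping argument should remain on either side.

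I would then invoke two facts from the preceding sections. The first is the basic compatibility built into the $\mathcal{DS}$-duality of \cite{CelaniTopological,CelaniCalomino}: for every $a\in A$, $h_{S_h}(\beta_{\mathbf{A}}(a)) = \beta_{\mathbf{B}}(h(a))$. The second is the representation identity from Theorem \ref{reo mon}: $m_{R_{m_{\mathbf{A}}}}(\beta_{\mathbf{A}}(a)) = \beta_{\mathbf{A}}(m_{\mathbf{A}}a)$, and analogously for $\mathbf{B}$. Combined, these two identities convert every question about $h_{S_h}$ on $D(X(\mathbf{A}))$ into a question about $h$ on $\mathbf{A}$ and $\mathbf{B}$.

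For part (1), assuming $h$ is monotonic, I would verify the commutativity displayed above on a generic basic element $U = \beta_{\mathbf{A}}(a)$. Expanding the left-hand side via the two facts yields $\beta_{\mathbf{B}}(h(m_{\mathbf{A}}a))$, while the right-hand side yields $\beta_{\mathbf{B}}(m_{\mathbf{B}}h(a))$; these coincide by the monotonicity assumption on $h$. Since every element of $D(X(\mathbf{A}))$ has the form $\beta_{\mathbf{A}}(a)$, Proposition \ref{equivalence homo} then delivers (\ref{eq:homo}).

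For part (2), I would run the same chain of equalities in reverse: from (\ref{eq:homo}) and Proposition \ref{equivalence homo} we obtain $h_{S_h}(m_{R_{m_{\mathbf{A}}}}(\beta_{\mathbf{A}}(a))) = m_{R_{m_{\mathbf{B}}}}(h_{S_h}(\beta_{\mathbf{A}}(a)))$, which by the same two facts rewrites as $\beta_{\mathbf{B}}(h(m_{\mathbf{A}}a)) = \beta_{\mathbf{B}}(m_{\mathbf{B}}h(a))$ for every $a\in A$, and injectivity of $\beta_{\mathbf{B}}$ (Theorem \ref{rep Hilbert}) then forces $h(m_{\mathbf{A}}a) = m_{\mathbf{B}}h(a)$. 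There is no genuine obstacle here; the substantive content has been absorbed into Proposition \ref{equivalence homo}, and what remains is a direct translation through the representation theorem.
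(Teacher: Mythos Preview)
Your proposal is correct and follows essentially the same route as the paper: both arguments reduce the question, via Proposition \ref{equivalence homo}, to checking that $h_{S_h}$ commutes with the modal operators on basic elements $\beta_{\mathbf{A}}(a)$, and then use the identities $h_{S_h}(\beta_{\mathbf{A}}(a))=\beta_{\mathbf{B}}(h(a))$ and $m_{R_m}(\beta(a))=\beta(ma)$ together with the injectivity of $\beta_{\mathbf{B}}$ to pass back and forth between $h$ and $h_{S_h}$. The only cosmetic difference is that you make the appeal to Proposition \ref{equivalence homo} explicit, whereas the paper leaves it implicit after displaying the chain of equalities.
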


\begin{proof} 1. Suppose that $h$ is a monotonic homomorphism. So,
it is easy to see that $h_{S_{h}}(\beta_{\mathbf{A}}(a))=\beta_{\mathbf{B}}(h(a))$
for all $a\in A$. Then, we have 
\begin{align*}
h_{S_{h}}(m_{R_{m_{\mathbf{A}}}}\beta_{\mathbf{A}}(a)) & =h_{S_{h}}(\beta_{\mathbf{A}}(m_{\mathbf{A}}a))=\beta_{\mathbf{B}}(h(m_{\mathbf{A}}a))\\
 & =\beta_{\mathbf{B}}(m_{\mathbf{B}}h(a))=m_{R_{m_{\mathbf{B}}}}(\beta_{\mathbf{B}}(h(a)))\\
 & =m_{R_{m_{\mathbf{B}}}}(h_{S_{h}}(\beta_{\mathbf{A}}(a)))
\end{align*}
for all $a\in A$.

2. Suppose that $h$ is a homomorphism and that $S_{h}$ satisfies
condition (\ref{eq:homo}). Then, $h_{S_{h}}(\beta_{\mathbf{A}}(a))=\beta_{\mathbf{B}}(h(a))$
for all $a\in A$. So, we have 
\begin{align*}
\beta_{\mathbf{B}}(h(m_{\mathbf{A}}a)) & =h_{S_{h}}(\beta_{\mathbf{A}}(m_{\mathbf{A}}a))=h_{S_{h}}(m_{R_{m_{\mathbf{A}}}}\beta_{\mathbf{A}}(a))\\
 & =m_{R_{m_{\mathbf{B}}}}(h_{S_{h}}(\beta_{\mathbf{A}}(a)))=m_{R_{m_{\mathbf{B}}}}(\beta_{\mathbf{B}}(h(a)))\\
 & =\beta_{\mathbf{B}}(m_{\mathbf{B}}h(a))
\end{align*}
and since $\beta_{\mathbf{B}}$ is an injective function, we get that
$h(m_{\mathbf{A}}a)=m_{\mathbf{B}}h(a)$ for all $a\in A$. \end{proof}

From Theorem \ref{HX} and Proposition \ref{equivalence homo}, we
conclude that the functor $\mathbb{D}:\mathcal{MDSR}\rightarrow\mathcal{MDSH}$
defined by 
\begin{enumerate}
\item $\mathbb{D}(X)=\langle D(X),m_{R}\rangle$ if $\langle X,\mathcal{T},R\rangle$
is a $DS$-space, 
\item $\mathbb{D}(S)=h_{S}$ if $S$ is a monotonic meet-relation
\end{enumerate}
is a contravariant functor. By Theorem \ref{reo mon}, Corollary \ref{rep cor}
and Proposition \ref{homo cond}, we conclude that the functor $\mathbb{X}:\mathcal{MDSH}\rightarrow \mathcal{MDSR}$
defined by 
\begin{enumerate}
\item $\mathbb{X}(\mathbf{A})=\langle X(\mathbf{A});\mathcal{T}_{A},R_{m}\rangle$
if $\langle\mathbf{A},m\rangle$ is a monotonic distributive semilattice, 
\item $\mathbb{X}(h)=S_{h}$ if $h$ is homomorphism of monotonic distributive
semilattices
\end{enumerate}
is a contravariant functor. Therefore, we give the following result.

\begin{corollary} The categories $\mathcal{MDSH}$ and $\mathcal{MDSR}$
are dually equivalent. \end{corollary}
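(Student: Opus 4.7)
The plan is to assemble the duality from the object-level isomorphisms $\beta_{\mathbf{A}}$ and $H_X$ already established, and then verify naturality with respect to monotonic homomorphisms and monotonic meet-relations. Since the paper (together with the base duality from \cite{CelaniTopological} and \cite{CelaniCalomino}) has already supplied the bulk of the ingredients, the task reduces to organising them into the two required natural isomorphisms
\[
\eta\colon \mathrm{Id}_{\mathcal{MDSH}}\Rightarrow \mathbb{D}\circ\mathbb{X},\qquad \varepsilon\colon \mathrm{Id}_{\mathcal{MDSR}}\Rightarrow \mathbb{X}\circ\mathbb{D}.
\]

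First I would define $\eta_{\langle \mathbf{A},m\rangle}=\beta_{\mathbf{A}}$. By Corollary \ref{rep cor}, $\beta_{\mathbf{A}}\colon \mathbf{A}\to D(X(\mathbf{A}))$ is an isomorphism of monotonic distributive semilattices, giving the object part. For naturality, let $h\colon \mathbf{A}\to \mathbf{B}$ be a monotonic homomorphism. Then $\mathbb{X}(h)=S_h$ and $\mathbb{D}(S_h)=h_{S_h}$, and one checks that $h_{S_h}(\beta_{\mathbf{A}}(a))=\beta_{\mathbf{B}}(h(a))$ for all $a\in A$ (exactly the identity already invoked in the proof of Proposition \ref{homo cond}). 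This is precisely the naturality square $\beta_{\mathbf{B}}\circ h = h_{S_h}\circ \beta_{\mathbf{A}}$.

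Next I would define $\varepsilon_{\langle X,\mathcal{T},R\rangle}=H_X$. Theorem \ref{HX} asserts that $H_X\colon X\to X(D(X))$ is an isomorphism of monotonic $DS$-spaces, so the object part is settled. For naturality on morphisms, given a monotonic meet-relation $S\subseteq X_1\times X_2$, I would verify that
\[
(x,y)\in S \iff (H_{X_1}(x),H_{X_2}(y))\in S_{h_S},
\]
which amounts to unfolding $S_{h_S}=\{(P,Q): h_S^{-1}[P]\subseteq Q\}$ together with $H_{X_i}(x)=\{U\in D(X_i):x\in U\}$ and $h_S(U)=\{x: S(x)\subseteq U\}$; this is the naturality statement for $H$ in the category of $DS$-spaces with meet-relations, already available from \cite{CelaniTopological,CelaniCalomino}, and it is automatically compatible with the monotonic multirelations because by Theorem \ref{HX} the multirelation $R$ is transported to $R_{m_R}$ under $H_X$.

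Having $\eta$ and $\varepsilon$ as natural isomorphisms, together with the functoriality of $\mathbb{D}$ and $\mathbb{X}$ already noted in the text, concludes the dual equivalence. The only genuine subtlety — which I would treat as the main obstacle — is checking that naturality in $\mathcal{MDSR}$ uses \emph{composition of meet-relations} rather than composition of functions; one must be careful that $h_{S_2\circ S_1}=h_{S_1}\circ h_{S_2}$ (contravariance) so the diagrams commute in the correct direction, and that the compatibility condition \eqref{eq:homo} is preserved under $H_X$, which is where the specific translation between $R$ and $R_{m_R}$ in Theorem \ref{HX} is essential.
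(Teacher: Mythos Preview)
Your proposal is correct and follows essentially the same route as the paper: the corollary is obtained by assembling the contravariant functors $\mathbb{D}$ and $\mathbb{X}$ together with the object-level isomorphisms $\beta_{\mathbf{A}}$ (Corollary~\ref{rep cor}) and $H_X$ (Theorem~\ref{HX}), relying on the base duality of \cite{CelaniTopological,CelaniCalomino} for the naturality on the underlying $DS$-spaces and on Proposition~\ref{homo cond} for the modal part. You simply make explicit the two natural isomorphisms $\eta$ and $\varepsilon$ and their naturality squares, which the paper leaves implicit in the paragraph preceding the corollary.
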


\section{Applications of the duality}

In this section we consider some applications of the duality. We will
consider some important subclasses and show how our new duality extends
the one developed in \cite{Celaniboole} for Boolean algebras.

\subsection{Additional conditions}

Now we will see how some additional conditions affect the relations
associated to the monotonic operator. 

The following formulas are $\pi$- and $\sigma$-canonical, i.e.,
their validity is preserved under taking $\pi$- and $\sigma$-canonical
extensions.

\begin{proposition} Let $\langle A,m\rangle\in\mathcal{MDS}$. Then,
\begin{enumerate}
\item $m1=1$	 iff	 $\forall P\in X(\mathbf{A})\ [\emptyset\notin R_{m}(P)]$
	iff	 $\forall P\in X(\mathbf{A})\ [X(\mathbf{A})\in G_{m}(P)]$;
\item $m0=0$ 	iff 	$\forall P\in X(\mathbf{A})\ [X(\mathbf{A})\in R_{m}(P)]$
	iff	 $\forall P\in X(\mathbf{A})\ [\emptyset\notin G_{m}(P)]$;
\item $\forall a\in A\ [ma\leq a]$ 	iff	 $\forall P\in X(\mathbf{A})\ [\alpha(P^{c})=(P]\in R_{m}(P)]$
	iff\\
 $\forall P\in X(\mathbf{A})\forall Y\in G_{m}(P)\ [P\in Y]$;
\item $\forall a\in A\ [a\leq ma]$ 	iff	$\forall P\in X(\mathbf{A})\forall Z\in R_{m}(P)\ [P\in Z]$
	iff \\
$\forall P\in X(\mathbf{A})\ [\hat{P}=[P)\in G_{m}(P)]$.
\end{enumerate}

\end{proposition}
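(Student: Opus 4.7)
The plan is to verify each of the four equivalences uniformly. For each I will (i) use the representation isomorphism $\beta$ to recast the algebraic condition on $m$ as a universal statement about membership in irreducible filters, (ii) for the distinguished $Z \in \mathcal{S}(X(\mathbf{A}))$ or $Y \in \mathcal{C}(X(\mathbf{A}))$ appearing on the right-hand side, compute the corresponding $I_{\mathbf{A}}(Z)$ or $F_Y$, and (iii) unfold the defining relations $(P,Z) \in R_m \Leftrightarrow m^{-1}(P) \cap I_{\mathbf{A}}(Z) = \emptyset$ and $(P,Y) \in G_m \Leftrightarrow F_Y \subseteq m^{-1}(P)$. Monotonicity of $m$ supplies the forward implications, while Theorem \ref{sep} supplies the converses by separating an inequality-witnessing element from a suitable principal filter and principal ideal.

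The dictionary entries I need are as follows: $I_{\mathbf{A}}(\emptyset) = A$ and $F_{X(\mathbf{A})} = \{1\}$ for item (1), the latter by applying Theorem \ref{sep} to $\{1\}$ and $(a]$ for each $a \neq 1$; $I_{\mathbf{A}}(X(\mathbf{A})) = \{0\}$ (when $\mathbf{A}$ has a least element) together with the convention $F_\emptyset = A$ for item (2); $(P] = \alpha(P^c)$ with $I_{\mathbf{A}}((P]) = A \setminus P$ for the $R_m$ part of (3), together with $P \in Y \Leftrightarrow F_Y \subseteq P$ (using $Y = \widehat{F_Y}$) and the specialisation $Y = \beta(a) = \widehat{[a)}$, for which $F_Y = [a)$, for its $G_m$ part; and Proposition \ref{Sat y cerr} rewriting $P \in Z$ as $P \cap I_{\mathbf{A}}(Z) = \emptyset$ (using that saturated sets are downsets under $\subseteq$) together with $F_{\hat{P}} = P$ for item (4). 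Under these identifications, (1) reduces to $m^{-1}(P) = \emptyset \Leftrightarrow m1 \notin P$ (which follows from monotonicity because $a \leq 1$ gives $ma \leq m1$); (2) reduces to $0 \notin m^{-1}(P) \Leftrightarrow m0 \notin P$ and dually to $A \subseteq m^{-1}(P) \Leftrightarrow m0 \in P$; (3) becomes ``$m^{-1}(P) \subseteq P$ for every $P$ is equivalent to $ma \leq a$ for every $a$''; and (4) becomes ``$P \subseteq m^{-1}(P)$ for every $P$ is equivalent to $a \leq ma$ for every $a$''. The non-trivial converses in (3) and (4) are obtained by applying Theorem \ref{sep} to $[ma)$ and $(a]$, respectively $[a)$ and $(ma]$, producing an irreducible filter that refutes the required containment.

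The main obstacle is purely bookkeeping: matching principal filters with closed sets and principal ideals with special basic saturated sets under the various dualities, and remembering that saturated sets in $X(\mathbf{A})$ are downsets under $\subseteq$, so that $Q \supseteq P$ and $Q \in Z$ force $P \in Z$ in item (4). A minor caveat worth flagging is that item (2) tacitly presupposes a least element $0 \in \mathbf{A}$, and that its $G_m$ statement requires interpreting $\emptyset$ as a degenerate closed set with $F_\emptyset = A$, even though $\mathcal{C}(X)$ was defined to consist only of non-empty closed subsets.
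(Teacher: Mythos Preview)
Your proposal is correct and follows essentially the same route as the paper: you compute the same dictionary entries ($I_{\mathbf{A}}(\emptyset)=A$, $F_{X(\mathbf{A})}=\{1\}$, $I_{\mathbf{A}}((P])=P^{c}$, $F_{\hat P}=P$), unfold the definitions of $R_m$ and $G_m$, and close the equivalences using monotonicity of $m$ together with the separation theorem. The only difference is organizational---the paper argues each item as a cycle of three implications by contradiction, whereas you reduce all three conditions to a single intermediate statement about $m^{-1}(P)$ and prove pairwise equivalences---and your caveat about item (2) (the tacit presence of $0$ and the need to admit $\emptyset$ as a closed set with $F_\emptyset=A$) is a fair observation about a point the paper leaves implicit.
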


\begin{proof}

1. Suppose that $m1=1$ and suppose that there exists $P\in X(\mathbf{A})$
such that $\emptyset\in R_{m}(P)$. Then, $m1=1\in P$ and $m^{-1}(P)\cap I_{\mathbf{A}}(\emptyset)=m^{-1}(P)\cap A=\emptyset$
and it follows that $m^{-1}(P)=\emptyset$ which is a contradiction. 

Now, suppose that for all $P\in X(\mathbf{A})$ we have $\emptyset\notin R_{m}(P)$
and suppose that there exists $P\in X(\mathbf{A})$ such that $X(\mathbf{A})\notin G_{m}(P)$.
Then, $F_{X(\mathbf{A})}=\{1\}\nsubseteq m^{-1}(P)$ , i.e., $1\notin m^{-1}(P)$.
Since $P$ is an upset, $m^{-1}(P)=\emptyset$. So, we have that $m^{-1}(P)\cap A=m^{-1}(P)\cap I_{\mathbf{A}}(\emptyset)=\emptyset$
and by definition $\emptyset\in R_{m}(P)$ which is a contradiction.

Suppose that for all $P\in X(\mathbf{A})$ we have $X(\mathbf{A})\in G_{m}(P)$
and suppose that $m1\neq1$. Then, there exists $P\in X(\mathbf{A})$
such that $m1\notin P$. So, we have that $F_{X(\mathbf{A})}=\{1\}\nsubseteq m^{-1}(P)$
which is a contradiction.

2. The proof is similar to 1.

3. Suppose that $ma\leq a$ for all $a\in A$ and that there exists
$P\in X(\mathbf{A})$ such that $(P]\notin R_{m}(P)$. Then, $m^{-1}(P)\cap P^{c}\neq\emptyset$.
So, there exists $a\in A$ such that $ma\in P$ and $a\notin P$,
which is a contradiction. 

Now, suppose that for all $P\in X(\mathbf{A})$ we have $(P]\in R_{m}(P)$
and suppose that there exists $P\in X(\mathbf{A})$ and there exists
$Y\in G_{m}(P)$ such that $P\notin Y$. Then, $F_{Y}\subseteq m^{-1}(P)$
and there exists $a\in F_{Y}$ such that $a\notin P$. So, $a\in m^{-1}(P)\cap P^{c}$
and it follows that $(P]\notin R_{m}(P)$ which is a contradiction.

Suppose that for all $P\in X(\mathbf{A})$ and for all $Y\in G_{m}(P)$
we have $P\in Y$ and suppose that $ma\nleq a$. Then, there exists
$P\in X(\mathbf{A})$ such that $ma\in P$ and $a\notin P$. So, we
have that $[a)\subseteq m^{-1}(P)$ but $P\notin\widehat{[a)}$ which
is a contradiction.

4. The proof is similar to 3.\end{proof}

Now, we will characterize the dual spaces of monotonic distributive
meet-semilattices satisfying condition ($\mathbf{4}_{\square}$)
$ma\leq m^{2}a$ or condition ($\mathbf{4}_{\Diamond}$) $m^{2}a\leq ma$ for
every element $a$. We will see that condition $\mathbf{4}_{\square}$
is $\sigma$-canonical and that condition $\boldsymbol{4}_{\Diamond}$
is $\pi$-canonical.

Let $\langle X,\mathcal{T},R\rangle$ be a monotonic $DS$-space.
For any $U\in\mathrm{Up}(X)$, we define the operator $m_{R}^{2}\colon\mathrm{Up}(X)\rightarrow\mathrm{Up}(X)$
by $m_{R}^{2}(U)=m_{R}(m_{R}(U))$.

\begin{remark}Let $\langle X,\mathcal{T},R\rangle$ be a monotonic
$DS$-space and let $Z\in\mathcal{S}(X)$. Then recall that $m_{R}(Z^{c})^{c}=\bigcap\{m_{R}(U)^{c}:U\in D(X)\text{ and }Z\subseteq U^{c}\}$.

\end{remark}

\begin{proposition}Let $\langle\mathbf{A},m\rangle\in\mathcal{MDS}$ such that $m^{2}a\leq ma$ for all $a\in A$. Then, $m_{R_{m}}^{2}(U)\subseteq m_{R_{m}}(U)$
for all $U\in\mathrm{Up}(X(\mathbf{A}))$, i.e., $\boldsymbol{4}_{\Diamond}$
is $\pi$-canonical.

\end{proposition}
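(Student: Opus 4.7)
The strategy is to establish the inclusion first on the open elements of the canonical extension---those of the form $Z^{c}$ with $Z\in\mathcal{S}(X(\mathbf{A}))$---and then to lift it to an arbitrary upset using the representation
\[
m^{\pi}(U)=\bigcap\{m^{\pi}(Z^{c}):U^{c}\supseteq Z\in\mathcal{S}(X(\mathbf{A}))\}
\]
from Lemma \ref{lemaim}. Recall that $m_{R_{m}}=m^{\pi}$, so establishing $m^{\pi}\circ m^{\pi}\leq m^{\pi}$ is exactly what is needed.

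Fix $Z\in\mathcal{S}(X(\mathbf{A}))$. By Lemma \ref{lemaim}, the set $Z':=(m^{\pi}(Z^{c}))^{c}$ is itself a special basic saturated subset, and applying Lemma \ref{lemaim} once more gives
\[
m^{\pi}(m^{\pi}(Z^{c}))=\bigcup\{\beta(mb):b\in I_{\mathbf{A}}(Z')\}.
\]
It thus suffices to prove $\beta(mb)\subseteq m^{\pi}(Z^{c})$ whenever $b\in I_{\mathbf{A}}(Z')$. Now $b\in I_{\mathbf{A}}(Z')$ is equivalent to $\beta(b)\subseteq(Z')^{c}=m^{\pi}(Z^{c})=\bigcup\{\beta(ma):a\in I_{\mathbf{A}}(Z)\}$. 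Because $I_{\mathbf{A}}(Z)$ is an order ideal and $m$ is monotonic, the family $\{\beta(ma):a\in I_{\mathbf{A}}(Z)\}$ is directed in $\beta[A]$; invoking compactness of the canonical extension with the singleton $\{\beta(b)\}$ as the (trivially) dually directed set, we obtain some $a\in I_{\mathbf{A}}(Z)$ with $\beta(b)\subseteq\beta(ma)$, i.e. $b\leq ma$ in $\mathbf{A}$. Monotonicity of $m$ combined with the hypothesis $m^{2}a\leq ma$ yields $mb\leq m^{2}a\leq ma$, and therefore $\beta(mb)\subseteq\beta(ma)\subseteq m^{\pi}(Z^{c})$.

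For an arbitrary $U\in\mathrm{Up}(X(\mathbf{A}))$, monotonicity of $m^{\pi}$ gives $m^{\pi}(U)\subseteq m^{\pi}(Z^{c})$ for each $Z\in\mathcal{S}(X(\mathbf{A}))$ with $Z\subseteq U^{c}$, whence $m^{\pi}(m^{\pi}(U))\subseteq m^{\pi}(m^{\pi}(Z^{c}))\subseteq m^{\pi}(Z^{c})$; intersecting over such $Z$ delivers the desired $m^{\pi}(m^{\pi}(U))\subseteq m^{\pi}(U)$. The main obstacle is the compactness step: one must verify directedness of $\{\beta(ma):a\in I_{\mathbf{A}}(Z)\}$ (which blends the order-ideal property of $I_{\mathbf{A}}(Z)$ with monotonicity of $m$) and dispose of the degenerate case $I_{\mathbf{A}}(Z)=\emptyset$, in which $m^{\pi}(Z^{c})=\emptyset$ and the desired inclusion reduces to $m^{\pi}(\emptyset)\subseteq\emptyset$, obtained by taking precisely this $Z$ in the intersection defining $m^{\pi}(\emptyset)$.
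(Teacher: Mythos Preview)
Your proof is correct and shares the paper's two-stage skeleton---first establish the inclusion on open elements $Z^{c}$ with $Z\in\mathcal{S}(X(\mathbf{A}))$, then lift to arbitrary upsets via the intersection formula $m^{\pi}(U)=\bigcap\{m^{\pi}(Z^{c}):U^{c}\supseteq Z\}$. The second stage is essentially identical to the paper's, which phrases the same monotonicity-plus-intersection argument as a contradiction through $R_{m}$. Where you genuinely diverge is in the first stage: the paper argues by contradiction using the relational descriptions $R_{m}(P)=\bigcap\{L_{U}:P\in m_{R_{m}}(U)\}$ and $m_{R_{m}}(Z^{c})^{c}=\bigcap\{m_{R_{m}}(V)^{c}:Z\subseteq V^{c}\}$, thereby reducing to the hypothesis on basic sets $\beta(a)$; you instead work directly with the explicit open-element formula $m^{\pi}(Z^{c})=\bigcup\{\beta(ma):a\in I_{\mathbf{A}}(Z)\}$ and invoke compactness of the canonical extension to extract a single witness $a$ with $b\le ma$. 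Your route is algebraically more transparent and isolates exactly where compactness is used (and you correctly handle the degenerate case $I_{\mathbf{A}}(Z)=\emptyset$); the paper's route stays within the relational idiom it has been developing and never names compactness explicitly, though it is of course present in the characterisation of $R_{m}(P)$.
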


\begin{proof}Let $A\in\mathcal{MDS}$ such that $m^{2}a\leq ma$
for all $a\in A$. Then, for all $U\in D(X(\mathbf{A}))$ we have
that $m_{R_{m}}^{2}(U)\subseteq m_{R_{m}}(U)$. First, we will see
that $m_{R_{m}}^{2}(Z^{c})\subseteq m_{R_{m}}(Z^{c})$ for all $Z\in\mathcal{S}(X(\mathbf{A}))$.
Let $P\in m_{R_{m}}^{2}(Z^{c})$. So, we get that $m_{R_{m}}(Z^{c})^{c}\notin R_{m}(P)$.
Suppose that $P\notin m_{R_{m}}(Z^{c})$. Then, we get that $Z\in R_{m}(P)$.
Since $R_{m}(P)=\bigcap\{L_{U}:U\in D(X)\text{ and }P\in m_{R_{m}}(U)\}$,
there exists $U\in D(X)$ such that $P\in m_{R_{m}}(U)$ and $m_{R_{m}}(Z^{c})^{c}\cap U=\emptyset$.
By the previous remark, there exists $V\in D(X)$ such that $Z\subseteq V^{c}$
and $m_{R_{m}}(V)^{c}\cap U=\emptyset$. Thus, $U\subseteq m_{R_{m}}(V)$
and by hypothesis $P\in m_{R_{m}}(U)\subseteq m_{R_{m}}^{2}(V)\subseteq m_{R_{m}}(V)$.
Since $P\in m_{R_{m}}(V)$ and $Z\in R_{m}(P)$ we get that $Z\cap V\neq\emptyset$
which is a contradiction. 

Now, we will see that $m_{R_{m}}^{2}(U)\subseteq m_{R_{m}}(U)$ for
all $U\in\mathrm{Up}(X(\mathbf{A}))$. Let $P\in m_{R_{m}}^{2}(U)$
and suppose that $P\notin m_{R_{m}}(U)$. Then, there exists $Z\in R_{m}(P)$
such that $Z\cap U=\emptyset$. So, $U\subseteq Z^{c}$ and we get
that $m_{R_{m}}(U)\subseteq m_{R_{m}}(Z^{c})$. Thus, $m_{R_{m}}(U)\cap m_{R_{m}}(Z^{c})^{c}=\emptyset$
and since $P\in m_{R_{m}}^{2}(U)$ we get that $m_{R_{m}}(Z^{c})^{c}\notin R_{m}(P)$.
Therefore $P\in m_{R_{m}}^{2}(Z^{c})\subseteq m_{R_{m}}(Z^{c})$ which
is a contradiction because $Z\in R_{m}(P)$.\end{proof}

Let $\langle X,\mathcal{T},R\rangle$ be a monotonic $DS$-space.
We will define a relation $\bar{R}\subseteq\mathcal{S}(X)\times\mathcal{S}(X)$
by
\[
(S,Z)\in\bar{R}\Leftrightarrow\forall x\in S\ (x,Z)\in R.
\]

We define $R^{2}\subseteq X\times\mathcal{S}(X)$ as follows 
\[
(x,Z)\in R^{2}\Leftrightarrow\exists S\in\mathcal{S}(X)\text{ such that }(x,S)\in R\text{ and }(S,Z)\in\bar{R}.
\]

\begin{definition}Let $\langle X,\mathcal{T},R\rangle$ be a monotonic
$DS$-space. The relation $R$ is \emph{transitive} if and only if
for all $x\in X$ and for all $Z\in\mathcal{S}(X)$ if $(x,Z)\in R^{2}$
then $(x,Z)\in R$.

\end{definition}

\begin{definition}Let $\langle X,\mathcal{T},R\rangle$ be a monotonic
$DS$-space. The relation $R$ is \emph{weakly dense} if and only
if for all $x\in X$ and for all $Z\in\mathcal{S}(X)$ if $(x,Z)\in R$
then $(x,Z)\in R^{2}$.

\end{definition}

\begin{corollary}Let $\langle A,m\rangle\in\mathcal{MDS}$. Then,
$I_{\mathbf{A}}(m_{R_{m}}(\alpha(I)^{c})^{c})=(m(I)]$ where $m(I)=\{ma:a\in I\}$.

\end{corollary}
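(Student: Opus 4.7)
The plan is to route both sides through the order-isomorphism $\alpha\colon\mathrm{Id}(\mathbf{A})\to\mathcal{S}(X(\mathbf{A}))$ established earlier: I will identify $m_{R_{m}}(\alpha(I)^{c})^{c}$ explicitly as a special basic saturated set, then read off the order ideal it corresponds to.

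First I would unpack $m_{R_{m}}(\alpha(I)^{c})$ using the $\pi$-extension. By the remark following the construction of $G_{m}$ and $R_{m}$, we have $m_{R_{m}}=m^{\pi}$ on $\mathrm{Up}(X(\mathbf{A}))$. Since $\alpha(I)\in\mathcal{S}(X(\mathbf{A}))$, Lemma \ref{lemaim} yields
\[
m_{R_{m}}(\alpha(I)^{c})=m^{\pi}(\alpha(I)^{c})=\bigcup\{\beta(ma):\alpha(I)\subseteq\beta(a)^{c}\}.
\]
The condition $\alpha(I)\subseteq\beta(a)^{c}$ is the same as $\beta(a)\cap\alpha(I)=\emptyset$, i.e., $a\in I_{\mathbf{A}}(\alpha(I))=I$ by the bijection $I=I_{\mathbf{A}}(\alpha(I))$. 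Hence
\[
m_{R_{m}}(\alpha(I)^{c})=\bigcup\{\beta(ma):a\in I\},\qquad m_{R_{m}}(\alpha(I)^{c})^{c}=\bigcap\{\beta(ma)^{c}:a\in I\}.
\]

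Second I would check that $(m(I)]$ is genuinely an order ideal of $\mathbf{A}$: it is a downset by definition, and if $b_{1}\leq ma_{1}$, $b_{2}\leq ma_{2}$ with $a_{1},a_{2}\in I$, pick $a\in I$ with $a_{1},a_{2}\leq a$ (using that $I$ is directed); monotonicity of $m$ gives $b_{1},b_{2}\leq ma\in m(I)$.

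Third I would show $m_{R_{m}}(\alpha(I)^{c})^{c}=\alpha((m(I)])$. For any $P\in X(\mathbf{A})$, since $P$ is an upset, $P\cap(m(I)]\neq\emptyset$ iff $P\cap m(I)\neq\emptyset$ iff $ma\in P$ for some $a\in I$. Therefore
\[
P\in\alpha((m(I)])\iff\forall a\in I,\ ma\notin P\iff P\in\bigcap\{\beta(ma)^{c}:a\in I\},
\]
which is exactly $m_{R_{m}}(\alpha(I)^{c})^{c}$.

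Finally, applying $I_{\mathbf{A}}$ to both sides and invoking the other half of the bijection, $I_{\mathbf{A}}(\alpha(J))=J$ for every order ideal $J$, gives
\[
I_{\mathbf{A}}(m_{R_{m}}(\alpha(I)^{c})^{c})=I_{\mathbf{A}}(\alpha((m(I)]))=(m(I)],
\]
which is the desired equality. There is no real obstacle: the argument is a direct chain through (i) the explicit $m^{\pi}$ formula on complements of special basic saturated sets, (ii) the duality $I\leftrightarrow\alpha(I)$ of Section \ref{section: Ideals}, and (iii) the elementary observation that the directedness of $I$ together with monotonicity of $m$ makes $(m(I)]$ an order ideal. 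The only mild subtlety is the collapse $P\cap(m(I)]=\emptyset\iff P\cap m(I)=\emptyset$, which uses that irreducible filters are upsets.
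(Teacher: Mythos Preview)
Your proof is correct. The paper states this result as a corollary without supplying a proof; your argument is precisely the natural derivation from the surrounding machinery, combining the formula $m^{\pi}(Z^{c})=\bigcup\{\beta(ma):Z\subseteq\beta(a)^{c}\}$ for $Z\in\mathcal{S}(X(\mathbf{A}))$ with the dual isomorphism $\alpha\leftrightarrow I_{\mathbf{A}}$ between order ideals and special basic saturated sets, and noting that $(m(I)]$ is an order ideal because $I$ is directed and $m$ is monotone.
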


\begin{lemma} \label{Lemma ideal}Let $A\in\mathcal{MDS}$. Then
for all $P\in X(\mathbf{A})$ and $I\in\mathrm{Id}(\mathbf{A})$,
$(P,\alpha(I))\in R_{m}^{2}$ if and only if $I\subseteq\{a\in A:m^{2}a\in P^{c}\}$.

\end{lemma}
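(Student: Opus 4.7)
The plan is to unfold the definition of $R_m^{2}$, use the bijection $\alpha\colon\mathrm{Id}(\mathbf{A})\rightarrow\mathcal{S}(X(\mathbf{A}))$ with inverse $I_{\mathbf{A}}$ so that every special basic saturated set appearing as an intermediate witness is of the form $\alpha(J)$ for a unique order ideal $J$, and then translate the conditions $(P,\alpha(J))\in R_{m}$ and $(\alpha(J),\alpha(I))\in\bar{R}_{m}$ via the definition of $R_{m}$ into statements about $m^{-1}(-)$ meeting $J$ and $I$.

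For the forward implication, assume $(P,\alpha(I))\in R_{m}^{2}$, and let $J\in\mathrm{Id}(\mathbf{A})$ be such that $S=\alpha(J)$ is a witness. From $(P,\alpha(J))\in R_{m}$ I have $m^{-1}(P)\cap J=\emptyset$, while $(\alpha(J),\alpha(I))\in\bar{R}_{m}$ means that for every $Q\in X(\mathbf{A})$ with $Q\cap J=\emptyset$ and every $b\in I$, $mb\notin Q$. I would first show $m(I)\subseteq J$: if there were $a\in I$ with $ma\notin J$, then $[ma)\cap J=\emptyset$ (since $J$ is a downset), and Theorem \ref{sep} would produce $Q\in X(\mathbf{A})$ with $ma\in Q$ and $Q\cap J=\emptyset$, contradicting the second condition at $b=a$. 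Hence $m(I)\subseteq J$, and combining with $m^{-1}(P)\cap J=\emptyset$ gives $m^{2}a=m(ma)\notin P$ for every $a\in I$, which is $I\subseteq\{a\in A:m^{2}a\in P^{c}\}$.

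For the reverse implication, assume $m^{2}a\notin P$ for all $a\in I$, and I would propose the explicit witness $J:=(m(I)]$, the downset in $\mathbf{A}$ generated by $m(I)$. The main verification is that $J$ is an order ideal: given $x,y\in J$ with $x\leq ma_{1}$, $y\leq ma_{2}$, directedness of $I$ yields $c\in I$ with $a_{1},a_{2}\leq c$, so by monotonicity $x,y\leq mc\in m(I)\subseteq J$. Next I verify $(P,\alpha(J))\in R_{m}$: any $b\in J$ satisfies $b\leq ma$ for some $a\in I$, whence $mb\leq m^{2}a$, and since $m^{2}a\notin P$ and $P$ is an upset, $mb\notin P$; thus $m^{-1}(P)\cap J=\emptyset$. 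Finally $(\alpha(J),\alpha(I))\in\bar{R}_{m}$ is immediate: for any $Q\in\alpha(J)$ and any $a\in I$, $ma\in m(I)\subseteq J$ and $Q\cap J=\emptyset$, so $ma\notin Q$, giving $m^{-1}(Q)\cap I=\emptyset$.

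The only non-routine step is the forward direction, where the key move is invoking Theorem \ref{sep} to build a witness $Q$ that contradicts the nested condition $(\alpha(J),\alpha(I))\in\bar{R}_{m}$; everything else reduces to unpacking definitions and using monotonicity of $m$ together with the fact that $I$ and the generated set $(m(I)]$ are order ideals.
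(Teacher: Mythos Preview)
Your proof is correct and follows essentially the same approach as the paper's: both directions use the same witness $J=(m(I)]$ for the backward implication (the paper reaches it via the Corollary identifying $I_{\mathbf{A}}(m_{R_m}(\alpha(I)^c)^c)$ with $(m(I)]$, while you construct it directly and verify it is an order ideal), and the forward implication in both cases hinges on applying the separation Theorem~\ref{sep} to the filter $[ma)$ and the ideal $J$ to produce a $Q\in\alpha(J)$ that contradicts $(\alpha(J),\alpha(I))\in\bar{R}_m$. The only cosmetic difference is that you first isolate the inclusion $m(I)\subseteq J$ and then conclude, whereas the paper carries out the contradiction for a single $a\in I$ in one pass.
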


\begin{proof}$\Rightarrow)$ Suppose that $(P,\alpha(I))\in R_{m}^{2}$
and let $a\in I$. Suppose that $m^{2}a\in P$. Then, there exists
$J\in\mathrm{Id}(\mathbf{A})$ such that $(P,\alpha(J))\in R_{m}$
and $(\alpha(J),\alpha(I))\in\bar{R}_{m}$. So, $m^{-1}(P)\cap J=\emptyset$
and $ma\notin J$. Thus, there exists $Q\in\alpha(J)$ such
that $ma\in Q$. Therefore, $(Q,\alpha(I))\in R_{m}$ and $a\notin I$
which is a contradiction. 

$\Leftarrow)$ Suppose that $I\subseteq\{a\in A:m^{2}a\in P^{c}\}$.
We will prove that $m_{R_{m}}(\alpha(I)^{c})^{c}\in R_{m}(P)$. Since
$I_{\mathbf{A}}(m_{R_{m}}(\alpha(I)^{c})^{c})=(m(I)]$, suppose that
there exists $a\in A$ such that $a\in m^{-1}(P)\cap(m(I)]$. So,
$ma\in P$ and there exists $b\in I$ such that $a\leq mb$. Then,
$ma\leq m^{2}b$ and we get that $m^{2}b\in P\cap P^{c}$ which is
a contradiction. Therefore, $m_{R_{m}}(\alpha(I)^{c})^{c}\in R_{m}(P)$
and $(m_{R_{m}}(\alpha(I)^{c})^{c},\alpha(I))\in\bar{R}_{m}$, i.e.,
$(P,\alpha(I))\in R_{m}^{2}$.\end{proof}

\begin{proposition} Let $A\in\mathcal{MDS}$. Then $ma\leq m^{2}a$
for all $a\in A$ if and only if $R_{m}$ is transitive. 

\end{proposition}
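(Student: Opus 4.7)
The plan is to use Lemma \ref{Lemma ideal} as a bridge between the algebraic inequality and the relational condition, since it already translates membership of $(P,\alpha(I))$ in $R_{m}^{2}$ into an ideal condition involving $m^{2}$, while membership in $R_{m}$ itself unfolds into the analogous condition involving $m$. Everything then reduces to the routine fact that every special saturated subset of $X(\mathbf{A})$ has the form $\alpha(I)$ for a unique order ideal $I$ (established in Section \ref{section: Ideals}).

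For the forward direction, I would assume $ma\leq m^{2}a$ for all $a\in A$ and take $(P,Z)\in R_{m}^{2}$. Writing $Z=\alpha(I)$, Lemma \ref{Lemma ideal} gives $I\subseteq\{a\in A:m^{2}a\notin P\}$. To conclude $(P,\alpha(I))\in R_{m}$, I need to show $m^{-1}(P)\cap I=\emptyset$, i.e.\ $I\subseteq\{a\in A:ma\notin P\}$. This is immediate: if $a\in I$ and $ma\in P$, then by hypothesis $m^{2}a\in P$ (since $P$ is an upset), contradicting $I\subseteq\{a:m^{2}a\notin P\}$.

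For the converse, I would argue by contrapositive. Suppose $ma\nleq m^{2}a$ for some $a\in A$; then by Theorem \ref{sep} applied to the filter $[ma)$ and the ideal $(m^{2}a]$, there exists $P\in X(\mathbf{A})$ with $ma\in P$ and $m^{2}a\notin P$. Set $I=(a]$. For every $b\leq a$ monotonicity of $m$ gives $m^{2}b\leq m^{2}a$, and since $m^{2}a\notin P$ and $P$ is an upset, $m^{2}b\notin P$; hence $I\subseteq\{c\in A:m^{2}c\notin P\}$. By Lemma \ref{Lemma ideal}, $(P,\alpha(I))\in R_{m}^{2}$. But $a\in I\cap m^{-1}(P)$ since $ma\in P$, so $m^{-1}(P)\cap I\neq\emptyset$, meaning $(P,\alpha(I))\notin R_{m}$, which contradicts transitivity of $R_{m}$.

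There is no real obstacle here: the hard work was done in Lemma \ref{Lemma ideal} and in establishing the bijection between order ideals of $\mathbf{A}$ and special basic saturated subsets of $X(\mathbf{A})$. The only point that requires a moment of care is noticing that upward-closedness of $P$ (rather than downward-closedness) is what makes both directions work, and that the witness $P$ in the contrapositive is produced precisely by the prime filter separation theorem for distributive semilattices.
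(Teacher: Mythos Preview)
Your proposal is correct and follows essentially the same route as the paper's own proof: both directions hinge on Lemma \ref{Lemma ideal}, with the forward direction arguing that $ma\in P$ forces $m^{2}a\in P$ to contradict the ideal condition, and the converse choosing the principal ideal $(a]$ after separating $ma$ from $m^{2}a$ by an irreducible filter. The only differences are cosmetic: you write $Z=\alpha(I)$ using the bijection of Section \ref{section: Ideals} where the paper works directly with $I_{\mathbf{A}}(Z)$, and you spell out the monotonicity step yielding $(a]\subseteq\{c:m^{2}c\notin P\}$, which the paper leaves implicit.
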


\begin{proof}$\Rightarrow)$ Suppose that $ma\leq m^{2}a$ for all
$a\in A$ and that $(P,Z)\in R_{m}^{2}$. Then, $I_{\mathbf{A}}(Z)\subseteq\{a\in A:m^{2}a\in P^{c}\}$.
Suppose that $m^{-1}(P)\cap I_{\mathbf{A}}(Z)\neq\emptyset$. We get
that there exists $a\in A$ such that $ma\in P$ and $a\in I_{\mathbf{A}}(Z)$.
Therefore $m^{2}a\notin P$ and $m^{2}a\in P$ which is a contradiction.
Since $m^{-1}(P)\cap I_{\mathbf{A}}(Z)=\emptyset$ we get that $(P,Z)\in R_{m}$.

$\Leftarrow)$ Suppose that $R_{m}$ is transitive and suppose that
there exists $a\in A$ such that $ma\nleq m^{2}a$. Then, there exists
$P\in X(\mathbf{A})$ such that $ma\in P$ and $m^{2}a\notin P$.
So, $(a]\subseteq\{a\in A:m^{2}a\in P^{c}\}$ and by Lemma \ref{Lemma ideal},
$(P,\alpha(a))\in R_{m}^{2}$. We get that $(P,\alpha(a))\in R_{m}$,
i.e., $ma\notin P$, which is a contradiction.\end{proof}

\begin{proposition} Let $A\in\mathcal{MDS}$. Then $m^{2}a\leq ma$
for all $a\in A$ if and only if $R_{m}$ is weakly dense.

\end{proposition}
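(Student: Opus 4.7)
The plan is to model the argument closely on the previous proposition (the transitivity case), using Lemma \ref{Lemma ideal} as the main bridge between the algebraic condition and the relational one. Lemma \ref{Lemma ideal} translates the condition $(P,\alpha(I))\in R_m^2$ into the purely algebraic inclusion $I\subseteq\{a\in A:m^2a\in P^c\}$, so the task reduces to relating such an inclusion to $(P,\alpha(I))\in R_m$, namely $m^{-1}(P)\cap I=\emptyset$.

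For the forward direction, I would assume $m^2a\le ma$ for every $a\in A$ and take $(P,Z)\in R_m$. Since the map $I\mapsto \alpha(I)$ is a bijection between $\mathrm{Id}(\mathbf{A})$ and $\mathcal{S}(X(\mathbf{A}))$, we have $Z=\alpha(I_{\mathbf{A}}(Z))$. By Lemma \ref{Lemma ideal} it is enough to verify $I_{\mathbf{A}}(Z)\subseteq\{a:m^2a\notin P\}$. For $a\in I_{\mathbf{A}}(Z)$, the assumption $(P,Z)\in R_m$ yields $ma\notin P$; and since $m^2a\le ma$ and $P$ is an upset, the presence of $m^2a$ in $P$ would force $ma\in P$. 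Hence $m^2a\notin P$, which gives $(P,Z)\in R_m^2$.

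For the converse, I would assume $R_m$ is weakly dense and argue by contradiction: pick $a\in A$ and $P\in X(\mathbf{A})$ with $m^2a\in P$ and $ma\notin P$. The natural candidate is $Z:=\alpha(a)=\beta(a)^c$, so that $I_{\mathbf{A}}(Z)=(a]$. Using monotonicity of $m$, each $b\le a$ satisfies $mb\le ma$, and because $P$ is an upset this forces $mb\notin P$; therefore $m^{-1}(P)\cap (a]=\emptyset$, i.e.\ $(P,Z)\in R_m$. Weak density then yields $(P,Z)\in R_m^2$, so Lemma \ref{Lemma ideal} gives $(a]\subseteq\{x:m^2x\notin P\}$. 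Taking $x=a$ contradicts $m^2a\in P$.

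There is no serious obstacle: the result is essentially a direct dualization via Lemma \ref{Lemma ideal}. The only subtle point to get right is the small monotonicity step in the converse direction, where one must remember that $P$ is an upset (not a downset), so $mb\le ma$ and $ma\notin P$ really do entail $mb\notin P$; this is what makes $Z=\alpha(a)$ a valid witness in $R_m(P)$.
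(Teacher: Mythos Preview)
Your proof is correct and follows essentially the same route as the paper: both directions hinge on Lemma \ref{Lemma ideal}, reducing weak density of $R_m$ to the inclusion $I_{\mathbf{A}}(Z)\subseteq\{a:m^2a\notin P\}$, and in the converse both take $Z=\alpha(a)$ after separating $m^2a$ from $ma$ by an irreducible filter. Your write-up is slightly more explicit in justifying $(P,\alpha(a))\in R_m$ via monotonicity, which the paper asserts directly, but the argument is the same.
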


\begin{proof}$\Rightarrow)$ Suppose that $m^{2}a\leq ma$ for all
$a\in A$ and that $(P,Z)\in R_{m}$. We will prove that $I_{\mathbf{A}}(Z)\subseteq\{a\in A:m^{2}a\in P^{c}\}$.
Let $a\in I_{\mathbf{A}}(Z)$. Since $m^{-1}(P)\cap I_{\mathbf{A}}(Z)=\emptyset$
we get that $ma\notin P$ and therefore $m^{2}a\notin P$. By Lemma
\ref{Lemma ideal}, $(P,Z)\in R_{m}^{2}$.

$\Leftarrow)$ Suppose that $R_{m}$ is weakly dense. Suppose that
there exists $a\in A$ such that $m^{2}a\nleq ma$. Then, there exists
$P\in X(\mathbf{A})$ such that $m^{2}a\in P$ and $ma\notin P$.
Then, $(P,\alpha(a))\in R_{m}$. So, $(P,\alpha(a))\in R_{m}^{2}$
and, by Lemma \ref{Lemma ideal}, $(a]\subseteq\{a\in A:m^{2}a\in P^{c}\}$,
i.e., $m^{2}a\notin P$, which is a contradiction.\end{proof}

For the sake of completeness we add the corresponding definitions
and theorems for $\mathcal{C}$-monotonic $DS$-spaces.

Let $\langle X,\mathcal{T},G\rangle$ be a $\mathcal{C}$-monotonic
$DS$-space. For any $U\in\mathrm{Up}(X)$ we define the operator
$\mathbf{m}_{G}^{2}\colon\mathrm{Up}(X)\rightarrow\mathrm{Up}(X)$ by $\mathbf{m}_{G}^{2}(U)=\mathbf{m}_{G}(\mathbf{m}_{G}(U))$.

\begin{remark}Let $\langle X,\mathcal{T},G\rangle$ be a $\mathcal{C}$-monotonic
$DS$-space and $Y\in\mathcal{C}(X)$. Recall that $\mathbf{m}_{G}(Y)=\bigcap\{\mathbf{m}_{G}(U):U\in D(X)\text{ and }Y\subseteq U\}$.

\end{remark}

\begin{proposition}Let $\langle\mathbf{A},m\rangle\in\mathcal{MDS}$
such that $ma\leq m^{2}a$ for all $a\in A$. Then, $\mathbf{m}_{G_{m}}(U)\subseteq \mathbf{m}_{G_{m}}^{2}(U)$
for all $U\in\mathrm{Up}(X(\mathbf{A}))$, i.e., $\boldsymbol{4}_{\square}$
is $\sigma$-canonical.

\end{proposition}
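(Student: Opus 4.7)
The plan is to mirror the structure of the proof of the $\pi$-canonicity of $\boldsymbol{4}_{\Diamond}$ given above, interchanging the roles of closed and special saturated sets. Recall from Lemma \ref{lemaim} and the surrounding discussion that $\mathbf{m}_{G_{m}} = m^{\sigma}$, so the target inclusion is $m^{\sigma}(U) \subseteq m^{\sigma}(m^{\sigma}(U))$ for every $U \in \mathrm{Up}(X(\mathbf{A}))$.

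First I would establish the inclusion on closed sets: $m^{\sigma}(Y) \subseteq m^{\sigma}(m^{\sigma}(Y))$ for every $Y \in \mathcal{C}(X(\mathbf{A}))$. By Lemma \ref{lemaim}, $m^{\sigma}$ sends closed sets to closed sets, so $m^{\sigma}(Y)$ is itself closed and therefore
\[
m^{\sigma}(m^{\sigma}(Y)) = \bigcap\{\beta(mb) : m^{\sigma}(Y) \subseteq \beta(b)\}.
\]
Fix $P \in m^{\sigma}(Y)$ and an element $b$ with $m^{\sigma}(Y) \subseteq \beta(b)$; the key step is to extract a single $a \in F_{Y}$ such that $ma \leq b$, after which the hypothesis closes the argument.

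To produce such an $a$, I would write $m^{\sigma}(Y) = \bigcap\{\beta(ma) : a \in F_{Y}\} = \hat{F}'$, where $F'$ is the filter generated by $\{ma : a \in F_{Y}\}$. Because $F_{Y}$ is closed under finite meets and $m$ is monotonic, $m(a_{1} \wedge \cdots \wedge a_{n}) \leq ma_{1} \wedge \cdots \wedge ma_{n}$, so $F' = \{b : \exists a \in F_{Y},\, ma \leq b\}$. From $\hat{F}' \subseteq \beta(b)$, applying Theorem \ref{sep} to the filter $F'$ and the order ideal $(b]$ forces $b \in F'$, yielding the desired $a \in F_{Y}$ with $ma \leq b$. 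Applying $m$ together with the hypothesis gives $ma \leq m^{2}a \leq mb$; since $P \in m^{\sigma}(Y) \subseteq \beta(ma)$ we get $mb \in P$, and $b$ was arbitrary, so $P \in m^{\sigma}(m^{\sigma}(Y))$.

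Finally, for arbitrary $U \in \mathrm{Up}(X(\mathbf{A}))$, Lemma \ref{lemaim} gives $m^{\sigma}(U) = \bigcup\{m^{\sigma}(Y) : U \supseteq Y \in \mathcal{C}(X(\mathbf{A}))\}$. Picking $P \in m^{\sigma}(U)$ and a closed $Y \subseteq U$ with $P \in m^{\sigma}(Y)$, the previous step gives $P \in m^{\sigma}(m^{\sigma}(Y))$, and two applications of the monotonicity of $m^{\sigma}$ (Lemma \ref{lemaim}) upgrade this to $P \in m^{\sigma}(m^{\sigma}(U))$. The main obstacle is the ``compactness'' extraction of a single $a \in F_{Y}$ from the infinite intersection $m^{\sigma}(Y) \subseteq \beta(b)$; this is where closure of $F_{Y}$ under meets combined with the monotonicity inequality $m(a \wedge a') \leq ma \wedge ma'$ does the work that distributivity of $\mathbf{A}$ played in the dual $\pi$-extension argument.
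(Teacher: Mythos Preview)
Your proof is correct and follows the same two-step scheme as the paper: first establish the inclusion on closed sets, then lift it to arbitrary upsets via monotonicity of $m^{\sigma}$. The only cosmetic difference is in the closed-set step: the paper argues by contradiction through the relation $G_{m}$ and the description $G_{m}(P)=\bigcap\{(D_{U})^{c}:P\notin\mathbf{m}_{G_{m}}(U)\}$ together with the remark $\mathbf{m}_{G_{m}}(Y)=\bigcap\{\mathbf{m}_{G_{m}}(V):Y\subseteq V\in D(X)\}$, whereas you compute directly that $m^{\sigma}(Y)=\hat{F}'$ for the filter $F'$ generated by $m[F_{Y}]$ and extract the single $a$ via the separation theorem---these are the same compactness argument in different clothing.
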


\begin{proof}Let $A\in\mathcal{MDS}$ such that $ma\leq m^{2}a$
for all $a\in A$. Then, for all $U\in D(X(\mathbf{A}))$ we have
that $\mathbf{m}_{G_{m}}(U)\subseteq \mathbf{m}_{G_{m}}^{2}(U)$. First, we will see
that $\mathbf{m}_{G_{m}}(Y)\subseteq \mathbf{m}_{G_{m}}^{2}(Y)$ for all $Y\in\mathcal{C}(X(\mathbf{A}))$.
Let $P\in \mathbf{m}_{G_{m}}(Y)$. So, we get that $Y\in G_{m}(P)$. Suppose
that $P\notin \mathbf{m}_{G_{m}}^{2}(Y)$. Then, we get that $\mathbf{m}_{G_{m}}(Y)\notin G_{m}(P)$.
Since $G_{m}(P)=\bigcap\{(D_{U})^{c}:U\in D(X)\text{ and }P\notin \mathbf{m}_{G_{m}}(U)\}$,
there exists $U\in D(X)$ such that $P\notin \mathbf{m}_{G_{m}}(U)$ and $\mathbf{m}_{G_{m}}(Y)\subseteq U$.
By the previous remark, there exists $V\in D(X)$ such that $Y\subseteq V$
and $\mathbf{m}_{G_{m}}(Y)\subseteq \mathbf{m}_{G_{m}}(V)\subseteq U$. Thus, $P\in \mathbf{m}_{G_{m}}(V)$
and by hypothesis $P\in \mathbf{m}_{G_{m}}^{2}(V)\subseteq \mathbf{m}_{G_{m}}(U)$ which
is a contradiction. 

Now, we will see that $\mathbf{m}_{G_{m}}(U)\subseteq \mathbf{m}_{G_{m}}^{2}(U)$ for
all $U\in\mathrm{Up}(X(\mathbf{A}))$. Let $P\in \mathbf{m}_{G_{m}}(U)$. Then,
there exists $Y\in G_{m}(P)$ such that $Y\subseteq U$. So, $\mathbf{m}_{G_{m}}(Y)\subseteq \mathbf{m}_{G_{m}}(U)$
and we get that $P\in \mathbf{m}_{G_{m}}(Y)\subseteq \mathbf{m}_{G_{m}}^{2}(Y)\subseteq \mathbf{m}_{G_{m}}^{2}(U)$.
Thus, $P\in \mathbf{m}_{G_{m}}^{2}(U)$.\end{proof}

Let $\langle X,\mathcal{T},G\rangle$ be a $\mathcal{C}$-monotonic
$DS$-space. We will define a relation $\bar{G}\subseteq\mathcal{C}(X)\times\mathcal{C}(X)$
by
\[
(Y,C)\in\bar{G}\Leftrightarrow\forall x\in Y\ (x,C)\in G.
\]

We define $G^{2}\subseteq X\times\mathcal{C}(X)$ as follows 
\[
(x,Y)\in G^{2}\Leftrightarrow\exists C\in\mathcal{C}(X)\text{ such that }(x,C)\in G\text{ and }(C,Y)\in\bar{G}.
\]

\begin{definition}Let $\langle X,\mathcal{T},G\rangle$ be a $\mathcal{C}$-
monotonic $DS$-space. The relation $G$ is \emph{transitive} if and
only if for all $x\in X$ and for all $Y\in\mathcal{C}(X)$ if $(x,Y)\in G^{2}$
then $(x,Y)\in G$.

\end{definition}

\begin{definition}Let $\langle X,\mathcal{T},G\rangle$ be a a $\mathcal{C}$-
monotonic $DS$-space. The relation $G$ is \emph{weakly dense} if
and only if for all $x\in X$ and for all $Y\in\mathcal{C}(X)$ if
$(x,Y)\in G$ then $(x,Y)\in G^{2}$.

\end{definition}

\begin{lemma}Let $A\in\mathcal{MDS}$ and $F\in\mathrm{Fi}(\mathbf{A})$.
Then, $F_{\mathbf{m}_{G_{m}}(\hat{F})}=[m(F))$ where $m(F)=\{ma:a\in F\}$.

\end{lemma}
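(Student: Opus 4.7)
The plan is to reduce the identity to a straightforward consequence of Lemma \ref{lemaim} and the separation theorem (Theorem \ref{sep}).

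First, I would compute $\mathbf{m}_{G_m}(\hat F)$ explicitly. Since $\mathbf{m}_{G_m}=m^{\sigma}$ (by the remark identifying the two operators) and $\hat F\in\mathcal{C}(X(\mathbf{A}))$, Lemma \ref{lemaim} gives
\[
\mathbf{m}_{G_m}(\hat F)=m^{\sigma}(\hat F)=\bigcap\{\beta(ma):\hat F\subseteq \beta(a)\}.
\]
Using the dual isomorphism $F\mapsto\hat F$ between $\mathrm{Fi}(\mathbf{A})$ and $\mathcal{C}(X(\mathbf{A}))$, which yields $F_{\hat F}=F$, the condition $\hat F\subseteq\beta(a)$ is equivalent to $a\in F$. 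Denoting this closed set by $Y$, we get $Y=\bigcap\{\beta(ma):a\in F\}$.

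Next, I would unfold $F_Y=\{b\in A:Y\subseteq\beta(b)\}$ and prove the two inclusions. For $(\supseteq)$, if $b\in[m(F))$ then there is $a\in F$ with $ma\leq b$; hence $\beta(ma)\subseteq\beta(b)$ and $Y\subseteq\beta(ma)\subseteq\beta(b)$, so $b\in F_Y$. I would also briefly note in passing that $[m(F))$ really is a filter: for $b_1,b_2\in[m(F))$ with witnesses $a_1,a_2\in F$, the element $a=a_1\wedge a_2\in F$ satisfies $ma\leq ma_i\leq b_i$ by monotonicity of $m$, whence $ma\leq b_1\wedge b_2$.

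The main direction is $(\subseteq)$. Suppose, toward a contradiction, that $b\in F_Y$ but $b\notin[m(F))$. Because $[m(F))$ is a filter and $b\notin[m(F))$, the order ideal $(b]$ is disjoint from $[m(F))$. By Theorem \ref{sep}, there exists $P\in X(\mathbf{A})$ with $[m(F))\subseteq P$ and $P\cap(b]=\emptyset$. Then $ma\in P$ for every $a\in F$, so $P\in\bigcap\{\beta(ma):a\in F\}=Y$, while $b\notin P$ gives $P\notin\beta(b)$, contradicting $Y\subseteq\beta(b)$. This $(\subseteq)$ step, which hinges on the separation of filters from ideals by irreducible filters, is the only nontrivial point; the rest of the argument is a matter of unpacking definitions.
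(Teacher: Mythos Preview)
Your argument is correct. The identification $\mathbf{m}_{G_m}=m^{\sigma}$, the computation $m^{\sigma}(\hat F)=\bigcap\{\beta(ma):a\in F\}$ via Lemma~\ref{lemaim} and the equivalence $\hat F\subseteq\beta(a)\Leftrightarrow a\in F$, and the two inclusions are all sound; the check that $[m(F))$ is a filter (needed to invoke Theorem~\ref{sep}) is the one point that genuinely uses monotonicity of $m$ together with closure of $F$ under meets, and you handle it properly.

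As for comparison with the paper: the paper states this lemma without proof, presenting it as the $\mathcal{C}$-side analogue of the (also unproved) corollary $I_{\mathbf{A}}(m_{R_m}(\alpha(I)^{c})^{c})=(m(I)]$. The implicit expectation is precisely the kind of argument you wrote: unfold $m^{\sigma}$ on the closed set $\hat F$ and separate using Theorem~\ref{sep}. So your proof is not merely compatible with the paper's approach but is essentially the proof the paper leaves to the reader.
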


\begin{lemma} Let $A\in\mathcal{MDS}$. Then for all $P\in X(\mathbf{A})$
and $F\in\mathrm{Fi}(\mathbf{A})$, $(P,\hat{F})\in G_{m}^{2}\Leftrightarrow F\subseteq\{a\in A:m^{2}a\in P\}$.

\end{lemma}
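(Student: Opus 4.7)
The plan is to prove both implications by unpacking the definitions of $G_m$, $\bar{G}_m$ and $G_m^2$, with the preceding lemma supplying the natural witness in the backward direction. The argument is the $\mathcal{C}$-monotonic counterpart of Lemma \ref{Lemma ideal}, with filters and closed sets playing the roles of order ideals and special basic saturated sets.

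For $(\Rightarrow)$, I would assume $(P,\hat{F})\in G_m^2$ and fix a witness $C\in\mathcal{C}(X(\mathbf{A}))$ with $(P,C)\in G_m$ and $(C,\hat{F})\in\bar{G}_m$. Using $F_{\hat{F}}=F$, which follows from the dual isomorphism between $\mathrm{Fi}(\mathbf{A})$ and $\mathcal{C}(X(\mathbf{A}))$ once $F$ is a proper filter, the condition $(C,\hat{F})\in\bar{G}_m$ gives $ma\in Q$ for every $Q\in C$ and every $a\in F$; hence $ma\in F_C$. Then $(P,C)\in G_m$, i.e., $F_C\subseteq m^{-1}(P)$, forces $m^2 a\in P$ for every $a\in F$.

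For $(\Leftarrow)$, assume $F\subseteq\{a:m^2a\in P\}$ and set $C:=\mathbf{m}_{G_m}(\hat{F})$. The preceding lemma identifies $F_C=[m(F))$. The condition $(C,\hat{F})\in\bar{G}_m$ is essentially immediate: every $Q\in C$ contains $F_C$, so $m(F)\subseteq Q$, which is exactly $F=F_{\hat{F}}\subseteq m^{-1}(Q)$. The content of the argument is verifying $(P,C)\in G_m$, i.e., $[m(F))\subseteq m^{-1}(P)$. The key reduction exploits that $F$ is closed under finite meets: if $b\ge ma_1\wedge\cdots\wedge ma_n$ with $a_i\in F$, then $a:=a_1\wedge\cdots\wedge a_n\in F$ and monotonicity gives $ma\le\bigwedge_i ma_i\le b$; a second application of monotonicity yields $m^2 a\le mb$, whereupon the hypothesis $m^2 a\in P$ combined with $P$ being an upset forces $mb\in P$.

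The only non-routine step is this last chain, where monotonicity of $m$ is invoked twice in opposite roles — first to push $ma$ below $\bigwedge_i ma_i$, then to lift $m^2 a$ below $mb$ — bridging the gap between a typical generator of $[m(F))$ and the hypothesis on $m^2 a$. Everything else is a transparent chase through the Galois pairing $F\leftrightarrow\hat{F}$ and the definitions of $G_m$, $\bar{G}_m$ and $G_m^2$.
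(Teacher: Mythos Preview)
Your proof is correct and follows exactly the approach the paper intends: it is the straightforward dualisation of Lemma~\ref{Lemma ideal}, with filters and closed sets replacing order ideals and special basic saturated sets, and with $\mathbf{m}_{G_m}(\hat{F})$ as the canonical witness in the backward direction (dual to the paper's use of $m_{R_m}(\alpha(I)^c)^c$). One minor simplification: since $F$ is closed under finite meets and $m$ is monotone, the set $m(F)$ is already dually directed, so every $b\in[m(F))$ satisfies $b\ge ma$ for a \emph{single} $a\in F$; your $n$-ary reduction $a=a_1\wedge\cdots\wedge a_n$ is correct but not needed.
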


\begin{proposition} Let $A\in\mathcal{MDS}$. Then $ma\leq m^{2}a$
for all $a\in A$ if and only if $G_{m}$ is weakly dense.

\end{proposition}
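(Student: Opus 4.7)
The plan is to reduce the statement to the preceding lemma that characterizes when $(P,\hat F)\in G_m^{2}$. Recall from the duality between $\mathrm{Fi}(\mathbf{A})$ and $\mathcal{C}(X(\mathbf{A}))$ that every $Y\in\mathcal{C}(X(\mathbf{A}))$ is of the form $\widehat{F_Y}$, and that $(P,Y)\in G_m$ exactly means $F_Y\subseteq m^{-1}(P)$. Combined with the lemma $(P,\hat F)\in G_m^{2}\iff F\subseteq\{a\in A:m^{2}a\in P\}$, the whole equivalence becomes a purely algebraic comparison of two conditions on filters.

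For the forward direction, I would assume $ma\leq m^{2}a$ for every $a$ and take an arbitrary pair $(P,Y)\in G_m$, so that $F_Y\subseteq m^{-1}(P)$. For each $a\in F_Y$ we get $ma\in P$, hence $m^{2}a\in P$ by the hypothesis and the fact that $P$ is an upset. Thus $F_Y\subseteq\{a\in A:m^{2}a\in P\}$, and the previous lemma applied to $Y=\widehat{F_Y}$ yields $(P,Y)\in G_m^{2}$.

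For the converse, I would argue by contraposition: suppose there is some $a\in A$ with $ma\not\leq m^{2}a$. Theorem~\ref{sep} (applied with the filter $[ma)$ and the ideal $(m^{2}a]$) produces an irreducible filter $P\in X(\mathbf{A})$ with $ma\in P$ and $m^{2}a\notin P$. The natural candidate closed set is $Y=\widehat{[a)}$, for which $F_Y=[a)$. Monotonicity of $m$ together with $ma\in P$ forces $[a)\subseteq m^{-1}(P)$, so $(P,Y)\in G_m$. Weak density then delivers $(P,Y)\in G_m^{2}$, and the lemma gives $[a)\subseteq\{b:m^{2}b\in P\}$; evaluating at $b=a$ contradicts $m^{2}a\notin P$.

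The only nonroutine point is the choice of the witnessing closed set in the converse; once one realizes that $\widehat{[a)}$ is the correct probe (because its associated filter is the principal filter generated by $a$, which is exactly what the lemma needs to convert back to a statement about $a$ itself), the rest is bookkeeping with $P$ being an upset and the previous lemma. No new ideas beyond what has already been used for the transitive case are required; this proof is the formal dual of the $\mathcal{S}$-version established for $R_m$ earlier.
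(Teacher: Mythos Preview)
Your proof is correct and follows exactly the approach the paper intends: it is the formal dual of the $R_m$ arguments given earlier (the paper omits the proof here ``for the sake of completeness''), reducing both directions to the preceding lemma $(P,\hat F)\in G_m^{2}\Leftrightarrow F\subseteq\{a:m^{2}a\in P\}$ and using the principal filter $[a)$ (dual to the principal ideal $(a]$ used in the $R_m$ case) as the witness in the converse.
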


\begin{proposition} Let $A\in\mathcal{MDS}$. Then $m^{2}a\leq ma$ for all $a\in A$ if and only if $G_{m}$ is transitive.

\end{proposition}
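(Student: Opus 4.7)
The plan is to mirror the proof of the $R_m$-transitivity result, replacing saturated sets with closed sets and ideals with filters, and using the second of the two preparatory lemmas above (which says $(P,\hat F)\in G_m^{2}$ iff $F\subseteq\{a\in A:m^{2}a\in P\}$) together with the definition $(P,Y)\in G_m$ iff $F_Y\subseteq m^{-1}(P)$.

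For the forward direction I would assume $m^{2}a\leq ma$ for all $a\in A$ and take an arbitrary pair $(P,Y)\in G_m^{2}$ with $Y\in \mathcal{C}(X(\mathbf{A}))$. Writing $Y=\hat{F_Y}$ and applying the lemma with $F=F_Y$ gives $F_Y\subseteq\{a\in A:m^{2}a\in P\}$. Since $P$ is an upset and $m^{2}a\leq ma$, every $a\in F_Y$ satisfies $ma\in P$, i.e.\ $F_Y\subseteq m^{-1}(P)$. By definition this is exactly $(P,Y)\in G_m$, so $G_m$ is transitive.

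For the converse I would argue by contraposition: assume there is some $a\in A$ with $m^{2}a\nleq ma$, and choose $P\in X(\mathbf{A})$ with $m^{2}a\in P$ and $ma\notin P$ (using the representation theorem). Take $F=[a)$, whose associated closed set is $\hat F=\widehat{[a)}=\beta(a)$, and note that $F_{\hat F}=[a)$. For every $b\in [a)$ monotonicity of $m$ gives $m^{2}a\leq m^{2}b$, hence $m^{2}b\in P$; thus $[a)\subseteq\{b\in A:m^{2}b\in P\}$ and the lemma yields $(P,\beta(a))\in G_m^{2}$. If $G_m$ were transitive we would get $(P,\beta(a))\in G_m$, i.e.\ $[a)=F_{\beta(a)}\subseteq m^{-1}(P)$, forcing $ma\in P$, a contradiction.

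The argument is essentially routine once the two preparatory lemmas are in hand; the only delicate point is the bookkeeping of the filter/closed-set duality, namely the identities $\widehat{[a)}=\beta(a)$ and $F_{\hat F}=F$, which are needed to extract the principal filter $[a)$ as the ``test'' witness in the converse direction. Everything else is a direct transcription of the $R_m$-proof under the filter/ideal and closed/saturated swap.
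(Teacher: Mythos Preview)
Your proposal is correct and follows exactly the approach the paper intends: the paper states this proposition without proof, having just indicated that the $G_m$ results are the filter/closed-set duals of the $R_m$ results, and your argument is precisely the expected transcription of the $R_m$-transitivity proof under that duality, using the preparatory lemma $(P,\hat F)\in G_m^{2}\Leftrightarrow F\subseteq\{a:m^{2}a\in P\}$ in place of Lemma~\ref{Lemma ideal}. The bookkeeping identities you flag ($\widehat{[a)}=\beta(a)$ and $F_{\hat F}=F$) are indeed the only points requiring care, and they hold by the filter/closed-set correspondence recalled in Section~2.
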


\subsection{Modal distributive semilattices}

\label{subsection: Modal}

In this section we consider distributive semilattices endowed with
a normal modal operator, i.e., a function that preserves the greatest element and finite meets. 

\begin{definition}A \emph{modal distributive semilattice} is an algebra
$\langle\mathbf{A},m\rangle$ where $\mathbf{A}$ is a distributive
semilattice and $m\colon A\rightarrow A$ is an operator that verifies
the following conditions:
\begin{enumerate}
\item $m1=1$,
\item $m(a\wedge b)=ma\wedge mb$ for all $a,b\in A$. 
\end{enumerate}
\end{definition}

It is clear that $m$ is a homomorphism and that a modal distributive
semilattice is a monotonic distributive semilattice. Thus, a modal
operator $m$ could be
represented by means of an adequate multirelation defined in
the dual space and by a meet-relation. Now we are
going to identify what additional conditions must satisfy this multirelation.

\begin{remark} Let $\langle X,\mathcal{T},R\rangle$ be a monotonic
$DS$-space. Since $(x]=\bigcap\{U\in\mathcal{KO}(X):x\in U\}$ and
$\mathcal{KO}(X)$ is a basis, we have that $(x]\in\mathcal{S}(X)$
for each $x\in X$. \end{remark}

\begin{remark} Given a modal distributive semilattice $\langle\mathbf{A},m\rangle$,
we note that $m^{-1}(F)\in\Fi({\mathbf{A}})$ for all $F\in\Fi({\mathbf{A}})$.
We also note that $I_{\mathbf{A}}((Q])=Q^{c}$ for all $Q\in X({\mathbf{A}})$.
\end{remark}

\begin{definition} A monotonic $DS$-space $\langle X,\mathcal{T},R\rangle$
is called \emph{normal} if for any $x\in X$ and for every $Z\in\mathcal{S}(X)$
such that $Z\in R(x)$ there exists $z\in Z$ such that $(z]\in R(x)$.
\end{definition}

Note that in every normal monotonic $DS$-space $\langle X,\mathcal{T},R\rangle$ for all $x\in X$, $\emptyset\notin R(x)$.

\begin{proposition} Let $\langle\mathbf{A},m\rangle$ be a monotonic
distributive semilattice. Then $\langle\mathbf{A},m\rangle$ is a
modal distributive semilattice iff $\langle X({\mathbf{A}}),\mathcal{T}_{\mathbf{A}},R_{m}\rangle$
is a normal monotonic $DS$-space. \end{proposition}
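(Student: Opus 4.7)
The plan is to combine Theorem~\ref{sep} with the facts, noted in the remarks just above the statement, that $I_{\mathbf{A}}((Q])=Q^{c}$ and that $m^{-1}(P)$ is a filter whenever $m$ is a semilattice homomorphism, so that a witness $z\in Z$ with $(z]\in R_{m}(P)$ corresponds to an irreducible filter $Q\in Z$ extending $m^{-1}(P)$.

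For the direct implication, assume $\langle\mathbf{A},m\rangle$ is modal and take $P\in X(\mathbf{A})$ and $Z\in\mathcal{S}(X(\mathbf{A}))$ with $Z\in R_{m}(P)$, i.e., $m^{-1}(P)\cap I_{\mathbf{A}}(Z)=\emptyset$. Since $m$ preserves $\wedge$ and $1$, $m^{-1}(P)$ is a filter, so Theorem~\ref{sep} yields $Q\in X(\mathbf{A})$ with $m^{-1}(P)\subseteq Q$ and $Q\cap I_{\mathbf{A}}(Z)=\emptyset$. The second condition places $Q\in\alpha(I_{\mathbf{A}}(Z))=Z$, while the first, rewritten via $I_{\mathbf{A}}((Q])=Q^{c}$, says $m^{-1}(P)\cap I_{\mathbf{A}}((Q])=\emptyset$, that is, $(Q]\in R_{m}(P)$, which is the required normality witness.

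For the converse, assume the dual space is normal and derive each modal axiom by contradiction. If $m1\notin P$ for some $P$, then by monotonicity $m^{-1}(P)$ is an upset not containing $1$, hence empty, so $\emptyset=\alpha(A)\in R_{m}(P)$ (the whole carrier $A$ is an order ideal since it has a top element); normality would then require some $z\in\emptyset$, which is impossible. For $ma\wedge mb\leq m(a\wedge b)$ (the reverse inequality being monotonicity), suppose some $P\in X(\mathbf{A})$ has $ma,mb\in P$ and $m(a\wedge b)\notin P$; monotonicity forces $m^{-1}(P)\cap(a\wedge b]=\emptyset$, so $\alpha(a\wedge b)\in R_{m}(P)$, and normality supplies $Q\in\alpha(a\wedge b)$ (so $a\wedge b\notin Q$) with $(Q]\in R_{m}(P)$, i.e., $m^{-1}(P)\subseteq Q$; since $a,b\in m^{-1}(P)\subseteq Q$ and $Q$ is a filter, $a\wedge b\in Q$, a contradiction. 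The main obstacle is identifying the right special saturated set to feed into normality in this last step; once $\alpha(a\wedge b)$ is chosen, the separation packaged inside normality automatically produces a $Q$ that contains $a$ and $b$ yet omits $a\wedge b$, and the filter property of $Q$ closes the argument.
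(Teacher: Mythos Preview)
Your proof is correct and follows essentially the same route as the paper: in both directions the key ingredients are that $m^{-1}(P)$ is a filter when $m$ is modal, the identification $I_{\mathbf{A}}((Q])=Q^{c}$, and for the converse the choice of $\alpha(a\wedge b)$ as the special saturated set to which normality is applied. The only difference is that you explicitly derive $m1=1$ from normality, whereas the paper delegates this to the preceding remark that $\emptyset\notin R(x)$ in any normal monotonic $DS$-space together with the earlier characterization of $m1=1$; your direct argument via $\alpha(A)=\emptyset$ is a clean way to make this step self-contained.
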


\begin{proof} $\Rightarrow)$ Let $(P,\alpha(I))\in R_{m}$. Then,
$m^{-1}(P)\cap I=\emptyset$. Since $\langle\mathbf{A},m\rangle$
is a modal distributive semilattice, we have that $m^{-1}(P)\in\Fi({\mathbf{A}})$.
So, there exists $Q\in X({\mathbf{A}})$ such that $m^{-1}(P)\subseteq Q$
and $Q\cap I=\emptyset$. Thus, $Q\in\alpha(I)$ and it is easy to
see that $m^{-1}(P)\cap Q^{c}=\emptyset$ and therefore, $(P,(Q])\in R_{m}$.

$\Leftarrow)$ Let $\langle X({\mathbf{A}}),\mathcal{T}_{\mathbf{A}},R_{m}\rangle$
be a normal monotonic $DS$-space. Suppose that there exist $a,b\in A$
such that $ma\wedge mb\nleq m(a\wedge b)$. Then, there exists $P\in X({\mathbf{A}})$
such that $ma\wedge mb\in P$ but $m(a\wedge b)\notin P$. Note that
$ma\wedge mb\leq ma\in P$ and $ma\wedge mb\leq mb\in P$. So, we
have that $(P,\alpha(a\wedge b))\in R_{m}$ and since $\langle X({\mathbf{A}}),\mathcal{T}_{\mathbf{A}},R_{m}\rangle$
is a normal monotonic $DS$-space, there exists $Q\in\alpha(a\wedge b)$
such that $(P,(Q])\in R_{m}$. From $I_{\mathbf{A}}((Q])=Q^{c}$ we
get that $m^{-1}(P)\cap Q^{c}=\emptyset$. Thus, $a,b\in m^{-1}(P)\subseteq Q$
and, since $Q$ is a filter, we have that $a\wedge b\in Q$. Hence,
$Q\cap(a\wedge b]\neq\emptyset$ which contradicts the fact that $Q\in\alpha(a\wedge b)$.
Therefore $\langle\mathbf{A},m\rangle$ is a modal distributive semilattice.\end{proof}

Since a normal modal operator is also a homomorphism of meet semilattices,
we can also interpret it through a meet-relation in the dual space.
We will show the relationship between the multirelation and the meet-relation
associated to the same operator. 

Let $\langle X,\mathcal{T}\rangle$ be a $DS$-space and let $S\subseteq X\times X$
be a meet-relation. Let $\mathrm{m}_{S}\colon\mathrm{Up}(X)\rightarrow\mathrm{Up}(X)$
be the operator defined by 
\[
\mathrm{m}_{S}(U)=\{x\in X:S(x)\subseteq U\}
\]
where $S(x)=\{y\in X:(x,y)\in S\}$. We define a multirelation $R_{S}\subseteq X\times\mathcal{S}(X)$
by 
\[
(x,Z)\in R_{S}\Leftrightarrow S(x)\cap Z\neq\emptyset.
\]
On the other hand, let $\langle X,\mathcal{T},R\rangle$ be a normal
monotonic $DS$-space. We define a relation $S_{R}\subseteq X\times X$
by 
\[
(x,z)\in S_{R}\Leftrightarrow(x,(z])\in R.
\]

\begin{proposition}
\begin{enumerate}
\item Let $\langle X,\mathcal{T},R\rangle$ be a normal monotonic $DS$-space.
Then, the structure $\langle X,\mathcal{T},S_{R}\rangle$ is a $DS$-space with a meet-relation $S_R$ such
that $m_{R}(U)=\mathrm{m}_{S_{R}}(U)$ for all $U\in\mathrm{Up}(X$) and $R=R_{S_{R}}$.
\item Let $\langle X,\mathcal{T},S\rangle$ be a $DS$-space with a meet-relation
$S\subseteq X\times X$ . Then, the structure $\langle X,\mathcal{T},R_{S}\rangle$
is a normal monotonic $DS$-space such that $m_{R_{S}}(U)=\mathrm{m}_{S}(U)$
for all $U\in\mathrm{Up}(X)$, and $S=S_{R_{S}}$.
\end{enumerate}
\end{proposition}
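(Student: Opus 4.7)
The plan is to prove the two parts by establishing, in each direction, the operator identity first and then the structural conditions. The main bridge between the two kinds of relations is the following observation: for any upset $U\subseteq X$ and any $y\in X$, one has $y\in U$ if and only if $(y]\cap U\neq\emptyset$ (since $U$ is $\leq$-upward closed and $(y]=\{w:w\leq y\}$ is the saturation of $\{y\}$).

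\emph{Part (1).} Given a normal monotonic $DS$-space $\langle X,\mathcal{T},R\rangle$, I would first verify that $m_{R}(U)=\mathrm{m}_{S_{R}}(U)$ for every $U\in\mathrm{Up}(X)$. For the inclusion $\supseteq$, if $S_{R}(x)\subseteq U$ and $Z\in R(x)$, normality yields $z\in Z$ with $(z]\in R(x)$, i.e., $z\in S_{R}(x)\subseteq U$, so $z\in Z\cap U$. For the reverse, if $x\in m_{R}(U)$ and $z\in S_{R}(x)$, then $(z]\cap U\neq\emptyset$, whence $z\in U$ by the bridge observation. From the identity, $h_{S_{R}}(U)=m_{R}(U)\in D(X)$ for $U\in D(X)$, giving the first meet-relation axiom. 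The second axiom $S_{R}(x)=\bigcap\{U\in D(X):S_{R}(x)\subseteq U\}$ is then obtained by combining the operator identity with condition~(2) of Definition~\ref{t esp mon}: $z\in S_{R}(x)$ iff $(z]\in R(x)$ iff $(z]\in L_{U}$ (that is, $z\in U$) for every $U\in D(X)$ with $x\in m_{R}(U)$, and by the identity these are precisely the $U\in D(X)$ containing $S_{R}(x)$. Finally, $R=R_{S_{R}}$: if $Z\in R(x)$ then normality supplies $z\in Z$ with $z\in S_{R}(x)\cap Z$; conversely, from $z\in S_{R}(x)\cap Z$ we get $(z]\in R(x)$ with $(z]\subseteq Z$ (the saturation of $\{z\}$ is contained in the saturated set $Z$), and Lemma~\ref{Rupset} upgrades this to $Z\in R(x)$.

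\emph{Part (2).} Given a $DS$-space $\langle X,\mathcal{T}\rangle$ with meet-relation $S\subseteq X\times X$, the identity $m_{R_{S}}(U)=\mathrm{m}_{S}(U)$ is almost immediate. If $S(x)\subseteq U$ and $Z\in R_{S}(x)$, any $y\in S(x)\cap Z$ lies in $Z\cap U$. For the converse, given $y\in S(x)\setminus U$, the set $(y]=\bigcap\{V\in\mathcal{KO}(X):y\in V\}$ is special basic saturated (the family is dually directed because $\mathcal{KO}(X)$ is a basis closed under downward refinement at a point), witnesses $(y]\in R_{S}(x)$ via $y\in S(x)\cap(y]$, and misses $U$ since $U$ is an upset with $y\notin U$. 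Thus $m_{R_{S}}(U)=h_{S}(U)\in D(X)$ for $U\in D(X)$. Normality is transparent: any $y\in S(x)\cap Z$ witnesses $(x,(y])\in R_{S}$. For $S=S_{R_{S}}$, note $z\in S(x)$ gives $z\in S(x)\cap(z]$ at once; conversely, $S(x)=\bigcap\{U\in D(X):S(x)\subseteq U\}$ is a closed set, hence a $\leq$-upset, so $w\in S(x)\cap(z]$ forces $z\in S(x)$.

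\emph{Main obstacle.} I expect the only non-routine step to be verifying condition~(2) of Definition~\ref{t esp mon} for $R_{S}$, namely $R_{S}(x)=\bigcap\{L_{U}:U\in D(X),\;x\in m_{R_{S}}(U)\}$. The nontrivial inclusion requires separating a $Z\in\mathcal{S}(X)$ disjoint from $S(x)$ from $S(x)$ by a single $U\in D(X)$. The argument I would use: since $S(x)$ is closed, $Z\subseteq S(x)^{c}=\bigcup\{U^{c}:U\in D(X),\,S(x)\subseteq U\}$; because $Z\in\mathcal{S}(X)$ coincides with the compact saturated subsets, a finite subcover of this open cover yields $U_{1},\ldots,U_{n}\in D(X)$ with $S(x)\subseteq U_{i}$ and $Z\subseteq U_{1}^{c}\cup\cdots\cup U_{n}^{c}$. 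Closure of $D(X)$ under finite intersection produces the required $U=U_{1}\cap\cdots\cap U_{n}\in D(X)$ with $S(x)\subseteq U$ (hence $x\in m_{R_{S}}(U)$) and $Z\cap U=\emptyset$, contradicting $Z\in L_{U}$.
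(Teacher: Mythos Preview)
Your proposal is correct and follows essentially the same route as the paper's proof: both parts establish the operator identity first, then verify the structural axioms, using normality and Lemma~\ref{Rupset} for $R=R_{S_R}$, and using that $S(x)$ is a closed upset for $S=S_{R_S}$. Your treatment of the ``main obstacle'' (condition~(2) of Definition~\ref{t esp mon} for $R_S$) is in fact more explicit than the paper's, which simply asserts that a special basic saturated $Z$ disjoint from the closed set $S(x)$ can be separated by some $U\in D(X)$; your compactness argument spells this out.
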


\begin{proof} 1. Let $U\in\mathrm{Up}(X)$. We will prove that $m_{R}(U)=\mathrm{m}_{S_{R}}(U)$.
Let $x\in m_{R}(U)$ and $z\in S_{R}(x)$. Then, $(x,(z])\in R$
and we get that $(z]\cap U\neq\emptyset$. Since $U$ is an upset,
we have that $z\in U$. Thus, $S_{R}(x)\subseteq U$ and $x\in \mathrm{m}_{S_{R}}(U)$.
Let $x\in \mathrm{m}_{S_{R}}(U)$ and $Z\in R(x)$. Then, there exists
$z\in Z$ such that $(z]\in R(x)$. By definition $(x,z)\in S_{R}(x)$.
So, $z\in U$ and we get that $Z\cap U\neq\emptyset$. Thus, $x\in m_{R}(U)$. 

So, we have that $m_{R}(U)=\mathrm{m}_{S_{R}}(U)\in D(X)$ for all $U\in D(X)$.
We will see that $S_{R}(x)=\bigcap\{U\in D(X):S_{R}(x)\subseteq U\}$
for all $x\in X$. Let $x,z\in X$ such that $z\in\bigcap\{U\in D(X):S_{R}(x)\subseteq U\}$.
Then, $z\in U$ for all $U\in D(X)$ such that $x\in \mathrm{m}_{S_{R}}(U)=m_{R}(U)$.
By condition (4) of Definition \ref{t esp mon}, $(z]\in R(x)$. Therefore,
$z\in S_{R}(x)$. 

Now, let $(x,Z)\in R$. We will see that $S_{R}(x)\cap Z\neq\emptyset$.
Since $\langle X,\mathcal{T},R\rangle$ is a normal monotonic $DS$-space,
there exists $z\in Z$ such that $(z]\in R(x)$. By definition $z\in S_{R}(x)\cap Z$.
Let $(x,Z)\in R_{S_{R}}$. Then, $S_{R}(x)\cap Z\neq\emptyset$. Let
$z\in Z$ such that $(x,z)\in S_{R}$. By definition of $S_{R}$,
$(x,(z])\in R$. So, $(z]\subseteq Z$ and by Proposition \ref{Rupset},
$(x,Z)\in R$. Therefore $R=R_{S_{R}}$.

2. Let $U\in\mathrm{Up}(X)$. We will prove that $m_{R_{S}}(U)=\mathrm{m}_{S}(U)$.
Let $x\in m_{R_{S}}(U)$ and $z\in S(x)$. Then, $S(x)\cap(z]\neq\emptyset$.
By definition of $R_{S}$, $(x,(z])\in R_{S}$. So, $(z]\cap U\neq\emptyset$
and since $U$ is an upset, we have that $z\in U$. Thus, $S(x)\subseteq U$
and $x\in \mathrm{m}_{S}(U)$. Let $x\in \mathrm{m}_{S}(U)$ and let $Z\in R_{S}(x)$.
Then, there exists $z\in Z$ such that $z\in S(x)$. So, $z\in U$
and we get that $Z\cap U\neq\emptyset$. Thus, $x\in m_{R_{S}}(U)$. 

So, we have that $\mathrm{m}_{S}(U)=m_{R_{S}}(U)\in D(X)$ for all $U\in D(X)$.
We will see that $\langle X,\mathcal{T},R_{S}\rangle$ is a normal
monotonic $DS$-space. Let $x\in X$, $Z\in\bigcap\{L_{U}:x\in m_{R_{S}}(U)\}$
and suppose that $Z\notin R_{S}(x)$. By definition, $Z\cap S(x)=\emptyset$
and since $S(x)$ is a closed subset, there exists $U\in D(X)$ such
that $Z\subseteq U^{c}$ and $S(x)\cap U^{c}=\emptyset$, i.e., $x\in \mathrm{m}_{S}(U)=m_{R_{S}}(U)$
and $Z\cap U=\emptyset$, which is a contradiction. Therefore, $R_{S}(x)=\bigcap\{L_{U}:x\in m_{R_{S}}(U)\}$.
Now, let $x\in X$ and $Z\in R_{S}(x)$. Then, there exists $z\in S(x)\cap Z$.
So, $S(x)\cap(z]\neq\emptyset$ and by definition of $R_{S}$, $(x,(z])\in R_{S}$. 

Finally, let $(x,z)\in S$. Then $z\in S(x)\cap(z]$ and, by definition,
$(x,(z])\in R_{S}$. Therefore, $(x,z)\in S_{R_{S}}$. On the other
hand, let $(x,z)\in S_{R_{S}}$. Then, $(x,(z])\in R_{S}$. So, $S(x)\cap(z]\neq\emptyset$.
Since $S$ is a meet-relation, $S(x)$ is an upset. Thus, $z\in S(x)$.
Therefore $S=S_{R_{S}}$.\end{proof} 

\begin{remark}Note that as a particular case we get the relation
defined in \cite{Gehrke} by Mai Gherke, where she gave an algebraic
derivation of the space associated to a bounded distributive lattice
with a modality $\square$ that preserves 1 and $\wedge$ based on
the canonical extension. Given the modality $\square:A\rightarrow A$,
since the extension $\square^{\sigma}=\square^{\pi}$ is completely
meet preserving, it is completely determined by its action on the
completely meet prime elements of the canonical extension. Working
on Stone spaces, the family $\mathcal{S}(X(\mathbf{A}))$ is the family
of all basic saturated sets and recall that $M^{\infty}(Up(X(\mathbf{A})))=\{\alpha(P^{c})^{c}=(P]^{c}:P\in X(\mathbf{A})\}$.
The relation $S\subseteq X(\mathbf{A})\times X(\mathbf{A})$ defined
in \cite{Gehrke} is: 
\[
(P,Q)\in S\Leftrightarrow\square^{\pi}((Q]^{c})\subseteq(P]^{c}.
\]
It is easy to see that $\square^{\pi}((Q]^{c})\subseteq(P]^{c}\Leftrightarrow\square^{-\text{1}}(P)\cap Q^c=\emptyset$.

\end{remark}

\subsection{Boolean Algebras with a monotonic operator}

\label{subsection: Boolean}

In paper \cite{Celaniboole} (see also \cite{Hansen} and \cite{HansenKupkePacuit})
S. Celani developed a topological duality between monotonic Boolean
algebras and descriptive monotonic frames. These frames are actually
monotonic $DS$-frames. 

Recall that a Boolean algebra with a normal monotonic operation, is
a pair $\langle\mathbf{A},\square\rangle$ such that $\mathbf{A}$
is a Boolean algebra, and $\square$ is an operator defined on $A$
such that 
\begin{enumerate}
\item $\square(a\wedge b)\leq\square a\wedge\square b$ for all $a,b\in A$, 
\item $\square1=1$.
\end{enumerate}
Also, recall that a \emph{Stone space} is a topological space $X=\langle X,\tau\rangle$ that is \textit{compact and totally disconnected}, i.e., given
distinct points $x,y\in X$, there exists a clopen (closed and open) subset
$U$ of $X$ such that $x\in U$ and $y\notin U$. Let $\mathrm{Clop}(X)$
be the family of closed and open subsets of a Stone space $\langle X,\tau\rangle$. 

A\emph{ descriptive} $m$-frame \cite{Celaniboole}, or \emph{monotonic
modal space}, is a triple $\langle X,R,\tau\rangle$ such that
\begin{enumerate}
\item $\langle X,\tau\rangle$ is a Stone space, 
\item $R\subseteq X\times\mathcal{C}_{0}(X)$, where $\mathcal{C}_{0}(X)=\mathcal{C}(X)-\{\emptyset\}$,
\item $\square_{R}(U)=\{x\in X:\forall Y\in R(x)\ (Y\cap U\neq\emptyset)\}\in\mathrm{Clop}(X)$
for all $U\in\mathrm{Clop}(X)$,
\item $R(x)=\bigcap\{L_{U}:x\in\square_{R}(U)\}$, for all $x\in X$. 
\end{enumerate}
\begin{remark} It is well known that if $\langle X,\tau\rangle$
is a Stone space then $\mathrm{Clop}(X)$ is a basis for the topology
and $\mathcal{S}(X)=\mathcal{C}(X)$. As $X$ is Hausdorff, the only
irreducible closed sets are singletons, so $X$ is sober. Then, it
is easy to see that any descriptive $m$-frame is a monotonic $DS$-space.\end{remark}

\begin{remark} Let $\mathbf{A}=\langle A,\vee,\wedge,\lnot,0,1\rangle$
be Boolean algebra. Note that if $F$ is a filter of $\mathbf{A}$,
then the set $I_{F}=\{\lnot a:a\in F\}$ is an ideal of $\mathbf{A}$,
and thus $\hat{F}=\alpha(I_{F})$. Similarly, if $I$ is an ideal
of $\mathbf{A}$, then the set $F_{I}=\{\lnot a:a\in I\}$ is a filter
of $\mathbf{A}$, and $\alpha(I)=\hat{F_{I}}$. \end{remark}

Let $\langle\mathbf{A},\square\rangle$ be a Boolean algebra endowed
with a normal monotonic operator $\square$. Let $\Diamond\colon A\rightarrow A$
be the dual operator defined by $\Diamond a=\lnot\square\lnot a$,
for each $a\in A$. Following the construction of the $\mathcal{S}$-
and $\mathcal{C}$- monotonic spaces, we have four relations: $G_{\Diamond}$,
$G_{\square}$, $R_{\Diamond}$ and $R_{\square}$. The following
proposition shows the relationships between them.

\begin{proposition} Let $\langle\mathbf{A},\square\rangle$ be a
Boolean algebra endowed with a monotonic operator. Then, $G_{\Diamond}=R_{\square}$
and $G_{\square}=R_{\Diamond}$.\end{proposition}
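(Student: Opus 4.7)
The plan is to exploit the Boolean negation to translate between filters and ideals, and then to verify each equality by a routine De Morgan unfolding of the definitions.

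First, since $\langle X(\mathbf{A}),\tau_{\mathbf{A}}\rangle$ is a Stone space, the remark preceding the proposition gives $\mathcal{S}(X(\mathbf{A}))=\mathcal{C}(X(\mathbf{A}))$. Hence the four relations $G_{\Diamond}, G_{\square}, R_{\Diamond}, R_{\square}$ all live in $X(\mathbf{A})\times\mathcal{C}(X(\mathbf{A}))$ and the equalities in the statement are meaningful. I then fix $P\in X(\mathbf{A})$ and $Y\in\mathcal{C}(X(\mathbf{A}))$, and let $F_{Y}=\{a\in A:Y\subseteq\beta(a)\}$.

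Second, I would establish the key dictionary
\[
I_{\mathbf{A}}(Y)=\{\lnot a:a\in F_{Y}\}.
\]
This follows by noting that $a\in I_{\mathbf{A}}(Y)$ iff $\beta(a)\cap Y=\emptyset$ iff every $Q\in Y$ satisfies $\lnot a\in Q$; but in a Boolean algebra, by the prime filter theorem, $\bigcap Y=\bigcap\{Q\in X(\mathbf{A}):F_{Y}\subseteq Q\}=F_{Y}$, so the last condition is $\lnot a\in F_{Y}$. Equivalently, this is a direct consequence of the remark $\hat{F}=\alpha(I_{F})$ with $I_{F}=\{\lnot a:a\in F\}$ applied to $F=F_{Y}$.

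Third, I unfold the four membership conditions using $\Diamond a=\lnot\square\lnot a$. For $G_{\Diamond}=R_{\square}$:
\begin{align*}
(P,Y)\in G_{\Diamond}&\iff F_{Y}\subseteq\Diamond^{-1}(P)\iff\forall a\in F_{Y}\ (\lnot\square\lnot a\in P)\iff\forall a\in F_{Y}\ (\square\lnot a\notin P),\\
(P,Y)\in R_{\square}&\iff\square^{-1}(P)\cap I_{\mathbf{A}}(Y)=\emptyset\iff\forall b\in I_{\mathbf{A}}(Y)\ (\square b\notin P),
\end{align*}
and by the dictionary the latter is $\forall a\in F_{Y}\ (\square\lnot a\notin P)$, matching the former. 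The equality $G_{\square}=R_{\Diamond}$ is proved analogously: unfolding gives $F_{Y}\subseteq\square^{-1}(P)$ versus $\forall a\in F_{Y}\ (\Diamond\lnot a\notin P)$, and $\Diamond\lnot a\notin P$ is equivalent to $\lnot\square\lnot\lnot a\notin P$, i.e.\ to $\square a\in P$.

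The only non-cosmetic step is the identity $I_{\mathbf{A}}(Y)=\{\lnot a:a\in F_{Y}\}$, which requires the Boolean fact $\bigcap\widehat{F}=F$; once this is in hand the rest is pure propositional bookkeeping with $\lnot$.
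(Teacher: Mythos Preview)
Your proof is correct and follows essentially the same route as the paper: both arguments hinge on the Boolean filter--ideal correspondence via negation (your ``dictionary'' $I_{\mathbf{A}}(Y)=\{\lnot a:a\in F_{Y}\}$ is exactly the paper's remark $\hat{F}=\alpha(I_{F})$ with $I_{F}=\{\lnot a:a\in F\}$, reparameterized by closed sets rather than filters), followed by the same De Morgan unwinding of $\Diamond=\lnot\square\lnot$. One cosmetic point: your appeal to the prime filter theorem for the dictionary is unnecessary, since $F_{Y}=\{a:Y\subseteq\beta(a)\}=\bigcap Y$ holds by definition.
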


\begin{proof} We will prove that $G_{\Diamond}=R_{\square}$. Let
$F\in\mathrm{Fi}(\mathbf{A})$ such that $(P,\hat{F})\in G_{\Diamond}$.
Then, $F\subseteq\Diamond^{-1}(P)$, i.e., for all $a\in F,$ $\lnot\square(\lnot a)\in P$.
So, $\square(\lnot a)\notin P$, i.e., $\lnot a\notin\square^{-1}(P)$.
Thus, $\square^{-1}(P)\cap I_{F}=\emptyset$ and $(P,\alpha(I_{F}))\in R_{\square}$.
By remark, $\hat{F}=\alpha(I_{F})$. The proof of other inclusion is similar. 

The other equality is proved analogously.\end{proof}

\end{document}